\def\CC{\mathbb{C}}
\def\EE{\mathbb{E}}
\def\FF{\mathbb{F}}
\def\GG{\mathbb{G}}
\def\LL{\mathbb{L}}
\def\QQ{\mathbb{Q}}
\def\RR{\mathbb{R}}
\def\ZZ{\mathbb{Z}}
\def\calC{\mathscr{C}}
\def\calL{\mathcal{L}}
\def\calN{\mathcal{N}}
\newcommand\frC{\mathfrak{C}}
\newcommand\frD{\mathfrak{D}}
\newcommand\frF{\mathfrak{F}}
\newcommand\frG{\mathfrak{G}}
\newcommand\frK{\mathfrak{K}}
\newcommand\frS{\mathfrak{S}}
\newcommand\frb{\mathfrak{b}}
\newcommand\frg{\mathfrak{g}}
\newcommand\frl{\mathfrak{l}}
\newcommand\frakm{\mathfrak{m}}
\newcommand\frp{\mathfrak{p}}
\newcommand\frq{\mathfrak{q}}
\newcommand\frt{\mathfrak{t}}
\newcommand\frz{\mathfrak{z}}
\newcommand\tilW{\widetilde{W}}
\newcommand\scrB{\mathscr{B}}
\newcommand\scrC{\mathscr{C}}
\newcommand\scrD{\mathscr{D}}
\newcommand{\Coh}{\textup{Coh}}
\newcommand{\gr}{\textup{gr}}
\newcommand\IC{\textup{IC}}
\newcommand{\Ind}{\textup{Ind}}
\newcommand{\ind}{\textup{ind}}
\newcommand{\Perf}{\textup{Perf}}
\newcommand{\QCoh}{\textup{QCoh}}
\newcommand{\reg}{\textup{reg}}
\newcommand{\Res}{\textup{Res}}
\newcommand\res{\textup{res}}
\newcommand\Spec{\textup{Spec}}
\newcommand\Sym{\textup{Sym}}
\newcommand{\Vect}{\textup{Vect}}
\newcommand{\Stack}{\textup{Stack}}
\newcommand{\Spr}{\textup{Spr}}
\newcommand\Hom{\textup{Hom}}
\newcommand\End{\textup{End}}
\newcommand\RHom{\textup{RHom}}
\newcommand\REnd{\textup{REnd}}
\newcommand\nc{\newcommand}
\nc\on{\operatorname}
\nc\ol{\overline}
\nc\ul{\underline}
\nc\us[1]{\underline{\smash{#1}}}
\nc\oo{\infty}
\nc\Cone{\mathit{Cone}}
\nc\ssupp{\mathit{ss}}
\nc\risom{\stackrel{\sim}{\to}}
\nc\Sh{\textup{Sh}}
\nc\un{\diamondsuit}
\nc\orient{\mathit{or}}
\nc\sing{\mathit{sing}}
\nc\MF{\on{MF}}
\nc\inthom{\mathit{Hom}}
\newcommand{\Ch}{\textup{Ch}}
\newcommand{\C}{\mathbb{C}}
\newcommand{\modu}{\textup{-mod}}
\newcommand{\colim}{\textup{colim}}
\newcommand{\beq}{\begin{equation}}
	\newcommand{\eeq}{\end{equation}}
\newcommand{\scrCh}{\mathscr{C}\textup{h}}
\newcommand{\scrQCoh}{\mathscr{QC}\textup{oh}}
\newcommand{\scrCoh}{\mathscr{C}\textup{oh}}
\newcommand{\scrPerf}{\mathscr{P}\textup{erf}}
\newcommand{\Pur}{\textup{Pur}}
\newcommand{\modfd}{\textup{-mod}_{\textup{fd}}}
\newcommand{\perf}{\textup{-perf}}
\newcommand{\open}{\textup{open}}
\newcommand{\prin}{\textup{prin}}
\newcommand{\hst}{{\heartsuit_t}}
\newcommand{\hsw}{{\heartsuit_w}}
\newcommand{\hstw}{{\heartsuit_{t,w}}}
\newcommand{\oblv}{\textup{oblv}}
\newcommand{\sfCh}{\mathsf{Ch}}
\newcommand{\sfQCoh}{\mathsf{QCoh}}
\newcommand{\sfCoh}{\mathsf{Coh}}
\newcommand{\sfPerf}{\mathsf{Perf}}
\newcommand{\sfA}{\mathsf{A}}
\newcommand{\sfB}{\mathsf{B}}
\newcommand{\sfRes}{\mathsf{Res}}
\newcommand{\sfRef}{\mathsf{Ref}}
\newcommand{\sfU}{\mathsf{U}}
\newcommand{\Cat}{\mathscr{C}\textup{at}}
\newcommand{\TCat}{\mathscr{TC}\textup{at}}
\newcommand{\WCat}{\mathscr{WC}\textup{at}}
\newtheorem{thm}[equation]{Theorem}
\newtheorem{prop}[equation]{Proposition}
\newtheorem{lem}[equation]{Lemma}
\newtheorem{cor}[equation]{Corollary}
\theoremstyle{definition}
\newtheorem{defn}[equation]{Definition}
\newtheorem{eg}[equation]{Example}
\newtheorem{no}[equation]{Notation}
\newtheorem{rmk}[equation]{Remark}
\numberwithin{equation}{subsection}
\def\tilfrC{\widetilde{\mathfrak{C}}}
\def\tilfrS{\widetilde{\mathfrak{S}}}
\newcommand\ov{\overline}
\begin{document}

\nc{\Conn}{\mathrm{Conn}}

\nc{\fg}{\mathfrak g}
\nc{\fh}{\mathfrak h}

\nc{\cN}{\mathcal N}

%%%%%%%%%%%%%%%%%%%%%%%%%%%%%%%%%%%%%%%%%%%%%%%%%%%%%%%%%
%%%%%%%%%%%%%%%%%%%%%%%%%%%%%%%%%%%%%%%%%%%%%%%%%%%%%%%%%
%%%%%%%%%%%%%%%%%%%%%%%%%%%%%%%%%%%%%%%%%%%%%%%%%%%%%%%%%

\title[Derived categories of character sheaves II]{Derived categories of character sheaves II: \\ canonical induction/restriction functors}

\author{Penghui Li}

\address{Yau Mathematical Science Center, Tsinghua University}
\email{lipenghui@mail.tsinghua.edu.cn}

\begin{abstract}
We give a combinatorial description of the dg category of character sheaves on a complex reductive group $G$, extending results of \cite{liDerivedCategoriesCharactera} for $G$ simply-connected. We also explicitly identify the parabolic induction/restriction functors. %As a input, we also explain that 
\end{abstract}

\dedicatory{to Lanxin.}

\maketitle

%%%%%%%%%%%%%%%%%%%%%%%%%%%%%%%%%%%%%%%%%%%%%%%%%%%%%%%%%
%%%%%%%%%%%%%%%%%%%%%%%%%%%%%%%%%%%%%%%%%%%%%%%%%%%%%%%%%
%%%%%%%%%%%%%%%%%%%%%%%%%%%%%%%%%%%%%%%%%%%%%%%%%%%%%%%%%

\section{Introduction}

\subsection{Main results}

Let $G$ be a connected complex reductive group. Denote $\Sh(G/G)$ the dg-category of all conjugate equivariant sheaves (in classical topology) on $G$.  The category of character sheaves on $G$ is by definition $\Ch(G/G) \subset \Sh(G/G) $ the full subcategory consists of sheaves with nilpotent singular support, and the category of constructible character sheaves $\Ch_c(G/G) \subset \Ch(G/G)$ is the full subcategory consists of constructible sheaves. This note gives a combinatorial description of these categories. 

Fix $T \subset G$ a maximal torus, denote $X_*(T)$ the coweight lattice,  $W_G=N_G(T)/T$ the Weyl group, and $\widetilde{W}_G = W_G \ltimes X_*(T)$ the extended affine Weyl group. 
%Let $\widetilde{\frL}$ be the set of Levihorics of the loop group $LG$ which contains $T$. 
Let $\widetilde{\frC}_G$ be the set of triples $(L,R,F)$ where $L$ is a Levihoric subgroup of the loop group $LG$, $R \subset L$ is a Borel subgroup which contains $T$, and $F$ is an isomorphism class of cuspidal sheaves on the nilpotent cone $\calN_{L}/L$. For $c=(L,R,F) \in \widetilde{\frC}_G$, we write $\frz_c =\frz_{\frl}$ the center of Lie algebra $\frl$ of $L$. Note $\widetilde{W}_G$ acts naturally on the set $\widetilde{\frC}_G$.  Our main theorem reads:
\begin{thm}[Theorem~\ref{thm:main}]
	\label{intro:thm_main}
	There is an equivalence of dg-categories:
	$$\Ch(G/G) \simeq (\prod_{c \in \widetilde{\frC}_G}  \Sym(\frz_c[1]) \textup{-mod})^{\widetilde{W}_G};  $$
	$$ \Ch_c(G/G) \simeq (\prod_{c \in \widetilde{\frC}_G}  \Sym(\frz_c[1]) \textup{-mod}_{\textup{fd}})^{\widetilde{W}_G}.    $$
	where we denote $\Sym$ the (differential-graded) symmetric algebra,  \textup{mod} the dg-category of all modules, and $\textup{mod}_{\textup{fd}}$ the dg-category of finite dimensional modules. The superscript ${\widetilde{W}}_G $  denotes the invariant category of the natural action induced by ${\widetilde{W}}_G $-action on $\widetilde{\frC}_G$.
\end{thm}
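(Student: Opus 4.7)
The plan is to decompose $\Ch(G/G)$ into blocks indexed by cuspidal data, compute each block as modules over an explicit endomorphism dg-algebra, and then assemble the answer as invariants under $\widetilde{W}_G$, bootstrapping from the simply-connected case of Part I \cite{liDerivedCategoriesCharactera}. First, I would establish a cuspidal (generalized Springer) decomposition: parabolic induction of cuspidal sheaves produces an orthogonal decomposition of $\Ch(G/G)$ whose blocks are indexed by $\widetilde{W}_G$-orbits on $\widetilde{\frC}_G$. For a representative triple $c = (L, R, F)$, the block generated by $\Ind_P^G F$ (for $P$ a parabolic with Levi $L$) is orthogonal to blocks from other orbits, so by Morita theory each block is equivalent to $\REnd(\Ind_P^G F)\modu$, and the $\widetilde{W}_G$-action permutes blocks along orbits.

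Next, I would compute $\REnd(\Ind_P^G F)$ on each block as a smash product of $\Sym(\frz_c[1])$ with the relative Weyl group stabilizer of $(L,F)$ in $G$. Using the horocycle/Grothendieck-Springer construction underlying the nilpotent singular support condition, the derived endomorphisms pick up a copy of $\Sym(\frz_c[1])$ from the equivariant cohomology of the classifying space of the central torus $Z(L)^\circ$ (whose Lie algebra is $\frz_c$), together with a finite twist by $N_G(L,F)/L$ permuting isomorphic cuspidal data. Gathering these contributions across $\widetilde{W}_G$-orbits and taking invariants gives the asserted equivalence with $(\prod_c \Sym(\frz_c[1])\modu)^{\widetilde{W}_G}$; the constructible version is obtained by restricting to compact/perfect objects on both sides.

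Finally, I would bridge Part I and the general case through a descent argument. For simply-connected $G$ the Levihoric subgroups reduce to ordinary Levi subgroups of $G$ and $\widetilde{W}_G$ reduces to the affine Weyl group $W_G \ltimes Q^\vee$, which is exactly the setting of Part I. For general $G$, the quotient $X_*(T)/Q^\vee$ acts geometrically on both sides; one either takes invariants in a $\pi_1(G)$-equivariant enhancement of Part I, or, equivalently, enlarges the indexing set to Levihoric subgroups of the loop group $LG$ so as to capture cuspidal data supported on non-identity components, with the translations in $\widetilde{W}_G = W_G \ltimes X_*(T)$ accounting for the enlargement. The main obstacle is the endomorphism algebra computation in the second step: pinning down $\Sym(\frz_c[1])$ with the correct cohomological shift requires a detailed analysis of Ext groups between parabolically induced cuspidal sheaves and is not formal. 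A secondary difficulty lies in verifying that the $\widetilde{W}_G$-action on the combinatorial side matches the geometric action by conjugation and by translations on components, especially when $Z(G)$ is disconnected and Levihoric subgroups of $LG$ genuinely extend the Levi subgroups of $G$; the Levihoric formalism is designed precisely to make this bookkeeping tractable.
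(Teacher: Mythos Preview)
Your proposal has a genuine gap at its first step. The triples $c = (L, R, F) \in \tilfrC_G$ have $L$ a \emph{Levihoric} subgroup of the loop group $LG$ --- the reductive quotient of a parahoric --- not a Levi subgroup of $G$. These are different objects: for a facet $I$ of the affine apartment, the Levihoric $G_I$ identifies (up to isogeny) with the centralizer in $G$ of $\exp(s)$ for $s \in I$, and such centralizers are typically \emph{not} Levi subgroups of $G$. Your claim that ``for simply-connected $G$ the Levihoric subgroups reduce to ordinary Levi subgroups of $G$'' is false: already for $G = \SL_{n+1}$ the Levihoric attached to any vertex of the fundamental alcove is again of type $A_n$, not a proper Levi. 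Consequently the expression ``$\Ind_P^G F$ for $P$ a parabolic with Levi $L$'' is meaningless for most $c \in \tilfrC_G$, and the block decomposition you describe cannot be produced by ordinary parabolic induction on $G$. The descent argument in your third paragraph rests on the same misidentification and does not go through as stated.

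The paper takes a different route. It covers $G/G$ (in the analytic topology) by open pieces of the form $U_{G,I}/'G_I$, where $G_I$ is the Levihoric attached to an affine facet $I$ and $U_{G,I}$ is an open subset of its Lie algebra $\frg_I$ (Section~\ref{sec:auto_glue}, building on \cite{liUniformizationSemistableBundles2021}). On each piece the category of character sheaves is identified combinatorially via the \emph{Lie-algebra} generalized Springer theory of \cite{lusztigCuspidalLocalSystems1988, riderFormalityLusztigGeneralized2021} (Proposition~\ref{prop:identify_invariant_category}); this is where $\Sym(\frz_c[1])$ enters, through Koszul duality with the formal endomorphism algebra $\Sym(\frz_\kappa^*[-2])$ on the Lie algebra side, rather than via an endomorphism computation on the group as you propose (compare Corollary~\ref{cor:formality}, where the group-level endomorphism algebra acquires an extra $H^*(T)$ factor absent on the Lie algebra). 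The key technical input is the graded-lift/transversal-category machinery of Section~\ref{sec:cat_pre}, which renders the local identifications canonically independent of parabolic choices (Proposition~\ref{prop:transversal_to_additive}, Section~\ref{sec:canonical_functors}) and hence coherently glueable at the $\infty$-categorical level. The global statement is then obtained as a limit over the diagram $\frF_G$ of affine facets, matched termwise with the spectral side (Sections~\ref{sec:spec_glue}--\ref{sec:match}). No descent from the simply-connected case is invoked; the argument is uniform in $G$.
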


Moreover, let $L \subset G$ be a Levi subgroup of some parabolic $P$, which contains $T$. Then the natural inclusion $\widetilde{W}_L \subset \widetilde{W}_G$ and $\widetilde{\frC}_L \subset \widetilde{\frC}_G $ induces
a functor via restriction:
$$\Res:  (\prod_{c \in \widetilde{\frC}_G}  \Sym(\frz_c[1]) \textup{-mod})^{\widetilde{W}_G} \to   (\prod_{c \in \widetilde{\frC}_L}  \Sym(\frz_c[1]) \textup{-mod})^{\widetilde{W}_L}.$$

Denote by $\Ind$ the left adjoint of $\Res$. On the other hand, we have a pair of adjoint functors by parabolic induction and restriction:
$$  \Ind_{L \subset P}^G : \Ch(L/L) \rightleftarrows \Ch(G/G) : \Res_{L \subset P}^G $$

\begin{thm}[Theorem~\ref{thm:main}]
	\label{intro:prop_parabolic_restriction}
	Under the equivalence in Theorem~\ref{intro:thm_main},
the diagrams naturally commute:
$$\xymatrix{
	\Ch(L/L)  \ar@<-.5ex>[d]_{\Ind_{L \subset P}^G}  \ar[r]^-{\simeq}  &    (\prod_{c \in \widetilde{\frC}_L}  \Sym(\frz_c[1]) \textup{-mod})^{\widetilde{W}_L}  \ar@<-.5ex>[d]_{\Ind} \\
	\Ch(G/G)  \ar[r]^-{\simeq} \ar@<-.5ex>[u]_{\Res_{L\subset P}^G}  &  (\prod_{c \in \widetilde{\frC}_G}  \Sym(\frz_c[1]) \textup{-mod})^{\widetilde{W}_G} \ar@<-.5ex>[u]_{\Res} 
}$$
Similar statement holds for the category of constructible character sheaves as well.
\end{thm}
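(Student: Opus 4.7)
The plan is to reduce both compatibilities to the single statement for $\Res$. Since the equivalences of Theorem~\ref{intro:thm_main} are equivalences of presentable dg-categories, they automatically preserve adjoints; and because $\Ind$ on the combinatorial side is \emph{defined} as the left adjoint of $\Res$, commutativity of the $\Res$-square forces commutativity of the $\Ind$-square by the uniqueness of left adjoints up to canonical isomorphism. This is the sense in which the functor on the right is ``canonical''.

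The main step is a blockwise analysis. The product decomposition of Theorem~\ref{intro:thm_main} realizes $\Ch(G/G)$ and $\Ch(L/L)$ as direct sums indexed by $\widetilde{W}_G$- (resp.\ $\widetilde{W}_L$-) orbits on the cuspidal data. For a fixed datum $c = (L',R',F) \in \widetilde{\frC}_G$, I would check two things: that $\Res^G_{L \subset P}$ maps the $c$-block into the sum of $L$-blocks indexed by $\widetilde{W}_G \cdot c \cap \widetilde{\frC}_L$, and that it intertwines the $\Sym(\frz_c[1])$-module structures. The latter is automatic because $\frz_c = \frz_{\frl'}$ depends only on the Levihoric $L'$, not on the ambient group. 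The former is an instance of the principle that parabolic restriction cannot enlarge cuspidal support: a cuspidal datum with Levihoric $L' \subset LG$ survives passage to $L$ precisely when $L'$ lies in $LL$, i.e., when $c \in \widetilde{\frC}_L$, in which case the decomposition of its $\widetilde{W}_G$-orbit into $\widetilde{W}_L$-orbits matches a Mackey-type decomposition of $\Res^G_{L \subset P} \circ \Ind^G_{L' \subset P'}$.

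The main obstacle is promoting this blockwise identification from objects to a natural isomorphism of dg-functors. I plan to do this by matching cuspidal projective generators on both sides: in the $c$-block, a generator is given by parabolic induction of $F$ from $L'$ to $G$, and a direct Mackey calculation along $P \backslash G / P'$---combined with the cuspidality of $F$, which kills all terms except those indexed by $w \in \widetilde{W}_G$ with $w L' w^{-1} \subset LL$---produces precisely the expected direct sum of $L$-side generators indexed by the $\widetilde{W}_L$-orbits of $\widetilde{W}_G \cdot c \cap \widetilde{\frC}_L$. The hard part will be matching endomorphism dg-algebras on both sides and checking that the Mackey isomorphism is $\Sym(\frz_c[1])$-linear and equivariant for the relevant Weyl group stabilizers; with this in hand, both squares of the theorem follow simultaneously.
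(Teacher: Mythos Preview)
Your reduction of the $\Ind$-square to the $\Res$-square by adjunction is correct and is exactly the paper's first move. The divergence begins after that.

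The central gap is the claim that ``in the $c$-block, a generator is given by parabolic induction of $F$ from $L'$ to $G$.'' For $c=(L',R',F)\in\widetilde{\frC}_G$, the Levihoric $L'$ is a subgroup of the loop group $LG$, not of $G$; in general there is no parabolic $P'\subset G$ with Levi factor $L'$, so the proposed Mackey calculation ``along $P\backslash G/P'$'' is not defined. The blocks of $\Ch(G/G)$ indexed by genuinely affine cuspidal data---those for which $L'$ does not come from a Levi of $G$ itself---admit no description as a single parabolic induction from a subgroup of $G$, and your generator-matching strategy collapses there. Even on the principal block, where $L'=T$ is an honest Levi, promoting a Mackey isomorphism to a natural isomorphism of dg-functors runs into the coherence problem you flag but do not resolve.

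The paper's route is different in kind. It never attempts a global Mackey decomposition on $G$. Instead it realizes $\Ch(G/G)$ as a limit over the facet category $\frF_G$ of character-sheaf categories $\Ch(\overline{\frg}_I/'G_I)$ on Lie algebras of Levihoric subgroups (Section~\ref{sec:auto_glue}), and realizes the spectral side as a matching limit (Section~\ref{sec:spec_glue}). Compatibility with $\Res$ is then checked facet by facet, where it becomes the Lie-algebra statement of Proposition~\ref{prop:identify_invariant_category}. The higher coherence you worry about is handled not by dg-algebra bookkeeping but by passing to graded lifts (Section~\ref{sec:canonical_functors}): the transversal structure of Proposition~\ref{prop:transversal_to_additive} collapses the relevant diagrams of $\infty$-categories to diagrams of ordinary $1$-categories, where commutativity can be checked on pure objects. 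The Levihoric nature of the cuspidal data is precisely what forces this detour through the facet-wise Lie algebra picture.
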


\begin{rmk}
When $G$ is simply-connected, we have natural bijection between sets $\tilfrC_{G}/{\widetilde{W}_G} \simeq \{(J,F)\}$, where $J$ is  a facet of a fixed affine chamber, and $F$ a cuspidal sheaf on the nilpotent cone of the Levihoric associated to $J$. Thus in view of Remark~\ref{rmk:invariant} below,  Theorem~\ref{intro:thm_main} recovers the main theorem of \cite{liDerivedCategoriesCharactera}.
\end{rmk}

\subsection{Ideas and strategies} We give an informal introduction of the main ideas. The discussion in this section will not be used later. The character sheaves form a sheaf of categories on $G/G$, which we denote by $\scrCh_G$. 
Let $\chi:G/G \to T//W_G$ be the characteristic polynomial map. Then $\chi_*(\scrCh_G)$ is a sheaf of categories on $T//W_G$. On the other hand, we explicitly define a sheaf of category $\scrQCoh_{{\tilfrC}_G}$ on the affine space $\frt$. 
Roughly speaking, $\scrQCoh_{{\tilfrC}_G}$ is a product of constant sheaves of categories on affine root subspaces of $\frt$, with multiplicity labeled by ${\tilfrC}_G$. 
The sheaf of categories $\scrQCoh_{{\frC}_G}$ is naturally $\tilW_G$-equivariant, therefore we may identify it as a sheaf of categories on the quotient stack $\frt/\tilW_G$, denote by $\scrQCoh_{{\frC}_G}^{\tilW_G}$. 
Let $\pi: \frt/\tilW_G \to \frt//\tilW_G \simeq T//W_G$ be the natural projection that forgets the stabilizer. We expect that there is an isomorphism of sheaves of categories over $T//W_G$: 
\begin{equation}
	\label{eq:Ch=QCoh_as_sheaves}
 \chi_*(\scrCh_G)  \simeq         \pi_*(\scrQCoh_{{\frC}_G}^{\tilW_G})      
\end{equation}

Theorem~\ref{intro:thm_main} can be recovered by taking global sections. At the level of stalk, (\ref{eq:Ch=QCoh_as_sheaves}) is easy to check: 
indeed for any $s \in \frt$, and $\bar{s}$ the image of $s$ in $T//W$, 
one can define an reductive subgroup $G_{\bar{s}}$ of the loop group $LG$ (which is the stabilizer of $s$ for the twisted adjoint action of $LG$).  
We can identify the stalk $\chi_*(\scrCh_G)_{\bar{s}}$ with $\Ch(\frg_{s}/G_{s})$ the category of character sheaves on the Lie algebra $\frg_ {s}$. 
And can be further identifed with the stalk  $\pi_*(\scrQCoh_{{\frC}_G}^{\tilW_G})_{\bar{s}}$ by the explicity description of character sheaves on Lie algebra \cite{riderFormalityLusztigGeneralized2021,liDerivedCategoriesCharactera}.

In this note, we shall not directly show this equivalence as sheaves of categories. Instead, we pick an explicit cover that computes the global sections. To do this properly in the context of $\infty$-categories, we shall consider the graded lift/mixed version of character sheaves categories on Lie algebra, 
as done in \cite{riderFormalityLusztigGeneralized2021}. 
 We reinterpret the graded lift using categories with transversal weight and $t$-structure (See Section~\ref{sec:cat_pre} and appendix~\ref{secA:graded_lift} for facts about the category theory we use.).
The advantage of graded lift is that pure objects form an $1$-category $\Pur(X)$, and therefore the infinitely many coherent data in a diagram of $\infty$-categories reduces to the commutative diagram of $1$-categories $\Pur(X)$'s.

A subtle point is that the local identifications \textit{a priori} depends on choices of parabolic subgroups. Instead of keeping track of this choice as in \cite{liDerivedCategoriesCharactera}, we shall show that the functors involved (which in particular include parabolic induction functors) are canonically independent of the choice of parabolics. This is carried out in Section~\ref{sec:canonical_functors}.

Finally, in Section~\ref{sec:gluing} we glue the local identifications to obtain our main theorem. On the character sheaves side, this is essentially \cite{liUniformizationSemistableBundles2021}, with the combinatorics slightly modified. On the spectral side, this is a straightforward computation.

\subsection{Acknowledgements}

This note is motivated by the joint projects with Quoc P. Ho, and independently with David Nadler and Zhiwei Yun. We thank them for their interest and inspiration. The author was partially supported by the National Natural Science Foundation of China (Grant No. 12101348).

\section{Preliminary on category theory}
\label{sec:cat_pre}

\subsection{$2$-categories}
We shall briefly recall the notion of $2$-categories as defined in \cite{johnson2DimensionalCategories2020}. 

\begin{defn}
	A \textit{$2$-category} $\mathscr{B}$ contains the following data: 
	\begin{itemize}
		\item A class of objects $\mathscr{B}_0$.
		\item A class $\mathscr{B}_1(X,Y)$ of $1$-cells from $X$ to $Y$, for $X,Y \in \mathscr{B}_0$.
		\item An identity $1$-cell $1_X \in \scrB_1(X,X), $ for $X \in \scrB_0$.
		\item A set $\scrB_2(X,Y)(f,f')$, or simply $\Hom_{\scrB}(f,f')$ of $2$-cells from $f$ to $f'$, for $1$-cell $f,f' \in \scrB_1(X,Y)$.
		\item An identity $2$-cell $1_f \in \Hom_{\scrB}(X,Y)$, for each $1$-cell $f \in \scrB_1(X,Y)$ and each pair of object $X,Y$.
		\item Vertical composition: 
		 $$ \Hom_{\scrB}(f',f'') \times \Hom_{\scrB}(f,f')  \longrightarrow \Hom_{\scrB}(f,f''), 
		 \qquad  (\alpha',\alpha) \mapsto \alpha'\alpha  ,$$
		 for objects $X,Y$ , and $1$-cells $f,f',f'' \in \scrB_1(X,Y)$.
		 \item Horizontal composition of $1$-cells:
		 $$ \scrB_1(Y,Z) \times \scrB_1(X,Y) \longrightarrow \scrB_1(X,Z), \qquad (g,f) \mapsto gf , $$
		 for objects $X,Y$ and $Z$.
		 \item Horizontal composition of $2$-cells:
		 $$ \Hom_{\scrB}(g,g') \times \Hom_{\scrB}(f,f') \longrightarrow \Hom_{\scrB}(gf,g'f'), 
		 \qquad  (\beta, \alpha) \mapsto \beta \star \alpha. $$
	\end{itemize}
These data are required to satisfy the axioms as in \cite[Proposition 2.3.4]{johnson2DimensionalCategories2020}.

By a $(2,1)$-\textit{category}, we mean a 2-category whose 2-cells are invertible.
\end{defn}

\begin{eg}
	The collection of small $1$-categories forms a $2$-category, with $1$-cell functor between $1$-categories, and $2$-cell natural transformation between functors. We denote this $2$-category by $\Cat_1$.
\end{eg}

\begin{eg}
A commutative square in a $2$-category $\scrB$:
$$\xymatrix{ X \ar[r]^f  \ar@{}[dr] | {\alpha} \ar[d]_g &  Y  \ar[d]^h  \\ 
			Z  \ar[r]_k  &    W }$$
		is the data of an \textit{invertible} $2$-cell $\alpha \in \Hom_{\scrB}(kg,hf)$. 
\end{eg}

We refer to \cite[Definition 4.1.2]{johnson2DimensionalCategories2020} for the definition of functors between $2$-categories. In particular, a functor $F: \scrB \to \scrB'$ consists: 
\begin{itemize}
	\item  A function $F:\scrB_0 \to \scrB_0'$ on objects.
	\item  A functor $F: \scrB_1(X,Y) \to \scrB'_1(X,Y)$, for each pair of object $X,Y$.
\end{itemize}

\begin{defn}
	A functor $F: \scrB \to \scrB'$ is \textit{locally faithful} if the functor $F: \scrB_1(X,Y) \to \scrB'_1(X,Y)$ is faithful, for all object $X,Y$ in $\scrB$. More explicitly, this means for any $2$-cells $f,g \in \scrB_1(X,Y)$, the induced map between sets
	$$ \Hom_{\scrB}(f,g) \longrightarrow  \Hom_{\scrB'}(Ff,Fg)  $$ 
	is injective.
\end{defn}

\subsection{Transversal categories} 
We refer to \cite{lurieHigherToposTheory2009a, lurieHigherAlgebra2012} for the theory of $\infty$-categories. By \textit{dg-category}, we mean a $\CC$-linear stable $\infty$-category. 
The notion of weight structure is developed by \cite{pauksztelloNoteCompactlyGenerated2011,bondarkoWeightStructuresVs2010}, and the transversal $t$-structure by \cite{bondarkoWeightStructuresWeights2012}. Readers can also refer to \cite[Section 5]{hoRevisitingMixedGeometry2022} for some basic facts. In this section, categories are assumed to be small unless otherwise specified.

\begin{defn} A \textit{transversal} $\infty$-category is a stable $\infty$-category $\mathscr{C}$ equipped with a bounded weight structure and a bounded transversal $t$-structure, whose weight heart is equivalent to an 1-category. 
\end{defn}

For a transversal category $\mathscr{C}$, we denote by $\mathscr{C}^{\heartsuit_{w}}$, $\mathscr{C}^{\heartsuit_{t}}$ the weight and $t$-heart. Put  $\mathscr{C}^{\heartsuit_{w,t}}= \mathscr{C}^{\heartsuit_{w}} \cap \mathscr{C}^{\heartsuit_{t}}$. It is known that the weight structure and $t$-structure of a transversal category can be recovered from the additive 1-subcategories $\{\scrC^{w=k} \cap \scrC^{\hst}\}_{k \in \ZZ}$ (which forms a semi-orthogonal family).

\begin{eg} Denote by $\Vect^c $ (resp. $(\Vect^{\gr,c})$ )
	the dg category of finite dimensional vector spaces (resp. graded vector spaces). 
	Then $\Vect^{\gr,c}$ is a transversal category, with $\Vect^{\gr,c,\heartsuit_{w}}= \{ \oplus V_n : H^i(V_n) =0, \forall i\neq n   \} $, and $\Vect^{\gr,c,\heartsuit_{t}}= \{ \oplus V_n : H^i(V_n) =0, \forall i\neq 0   \} $.
\end{eg}

\begin{defn} A graded \textit{transversal} $\infty$-category is an transversal $\infty$-category $\mathscr{C}$ together with an action of $\Vect^{\gr,c}$, such that the action of $\Vect^{\gr,c}$ on $\mathscr{C}$ is both weight and $t$-exact.
\end{defn}

\begin{eg}[\cite{hoRevisitingMixedGeometry2022} Propsition 5.6.12]
	\label{eg:graded_sheaves}
	The graded lift $\Sh_{\gr,c,S}(X)$ as in Appendix~\ref{secA:graded_lift} is a graded transversal category, the weight heart is given by $\Pur_{c,S}(X)$, an object in $\Sh_{\gr,c,S}(X)$ is in the $t$-heart if and only if its image in $\Sh_{c}(X)$ is perverse. Moreover $\Sh_{\gr,c,S}(X)^{\heartsuit_{w,t}}$ is given by the additive category of semisimple perserve sheaves on $X$ generated by $S$.
\end{eg}

\begin{no}
\begin{enumerate}
	\item For $\scrD$ a dg-category, and $c,d \in \scrD$, we denote 
	\begin{itemize}
		\item  $\RHom(c,d)$ the $\Hom$-complex in $\scrD$.
		\item $ \Hom^{i}(c,d)= H^i(\RHom(c,d))$, and $\Hom^{*}(c,d)= \oplus_{i \in \ZZ} \Hom^{i}(c,d)$.
		\item  $\REnd(c)=\RHom(c,c), \End^{i}(c)=\Hom^{i}(c,c)$ and $\End^{*}(c)=\Hom^{*}(c,c)$.
	\end{itemize}
		\item For $\scrC$ a $\Vect^{\gr,c}$ linear $\infty$-category, and $c,d \in \scrC$, we denote 
	\begin{itemize}
		\item  $\RHom^\gr(c,d)$ the graded $\Hom$-complex in $\scrC$.
		\item $ \Hom^{i,\gr}(c,d)= H^i(\RHom^\gr(c,d))$, and $\Hom^{*,\gr}(c,d)= \oplus_{i \in \ZZ} \Hom^{i,\gr}(c,d)$.
		\item  $\REnd^\gr(c)=\RHom^\gr(c,c), \End^{i,\gr}(c)=\Hom^{i,\gr}(c,c)$ and $\End^{*,\gr}(c)=\Hom^{*,\gr}(c,c)$.
	\end{itemize}
\end{enumerate}
\end{no}

Under the natural map $\oblv_{\scrC} :\scrC \to  \scrC^{\oblv} := \scrC \otimes_{\Vect^{\gr,c}} \Vect^c$, we have canonical isomorphism 
$$\oblv(\RHom^{\gr}(c,d)) \simeq \RHom(\oblv_{\scrC}(c),\oblv_{\scrC}(d)),$$ 
where $\textup{oblv}: \Vect^\gr \to \Vect$ is the natural map by forgeting the grading. For a $\Vect^{\gr,c}$-linear functor $F:\scrC \to \scrD$, denote by $F^{\oblv}:\scrC^\oblv \to \scrD^\oblv$ the induced functor.

If $\scrC= \Sh_{\gr,c,S}(X)$, then by assumption $\RHom^{\gr}(c,d)$ is pure for $c,d \in \scrC^{\hsw}$, namely 
$H^i(\Hom^{\gr}(c,d))$ is concentrated in graded degree $i$. Therefore $\REnd^{\gr}(c)$ is a formal graded dg algebra, for $c \in \scrC^{\hsw}$, namely we have an isomorphism of graded dg algebras $\REnd^{\gr}(c) \simeq \End^{*,\gr}(c)$. 

\begin{eg} Let $V$ be a finite dimensional vector space. Denote $A=\Sym(V[-2]\langle -2 \rangle )$ the graded symmetric algebra, with $V$ in cohmological degree $2$, and graded degree $2$.
	Then the category $\mathscr{C}=A$-$\textup{perf}^\gr$ of perfect graded dg-modules is a graded transversal category, with $\mathscr{C}^{w=k} \cap \mathscr{C}^{\heartsuit_{t}}=<A \langle k \rangle>$. 
	Put $A^!= \Sym(V^*[1]\langle 2 \rangle )$, then under Koszul duality: $A$-$\textup{perf}^\gr \simeq A^!$-$\textup{mod}_{\textup{fd}}^{\gr}=:\mathscr{D}$. 
	The transversal structure on $\mathscr{D}$ can be identified as $\mathscr{D}^{w=k} \cap \mathscr{D}^{\heartsuit_t}=<\CC\langle k \rangle>$.
	\end{eg}

\begin{defn}
Let $F:\mathscr{C} \to \mathscr{D}$ be a $\Vect^{\gr,c}$-linear functor between graded transversal $\infty$-categories. We say $F$ is \textit{trans-exact} if $F$ is both weight exact and $t$-exact.
\end{defn}

For any $\Vect^{\gr,c}$-linear category $\mathscr{C}$, and $c_1,c_2 \in \mathscr{C}$, denote by $\RHom^{\gr}(c_1,c_2) \in \Vect^\gr$ the enriched graded hom complex.

Denote the $\TCat_{\gr}$ be the $\infty$-category of graded transversal categories, with trans-exact functors, we have:

\begin{prop}
	\label{prop:transversal_to_additive} 
	The $\infty$-category $\TCat_{\gr}$ is equivalent to a $2$-category, moreover
natural functor $\TCat_{\gr} \to \Cat_1$, via $\mathscr{C} \mapsto \mathscr{C}^{\heartsuit_{w,t}}$, is locally faithful.
\end{prop}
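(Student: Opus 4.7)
The plan is to reduce both claims to the single statement that for trans-exact functors $F, G \colon \scrC \to \scrD$ between graded transversal $\infty$-categories, the mapping space $\Map(F, G)$ of natural transformations is discrete (a set), and the restriction map $\Map(F, G) \to \Hom(F|_{\scrC^{\hstw}}, G|_{\scrC^{\hstw}})$ is injective. Granting this, the mapping $\infty$-categories of $\TCat_{\gr}$ become $1$-categories, so $\TCat_{\gr}$ is a $2$-category, and the restriction to the common heart is faithful on $2$-cells, which is exactly local faithfulness. The technical heart of the argument will be a purity/formality statement: for $c, d \in \scrC^{\hstw}$, $\RHom^{\gr}(c, d)$ is pure, so its cohomology is concentrated on the diagonal in (cohomological, graded) bidegree; in particular, the graded-degree-zero part of $\Hom^i(c, d)$ vanishes for $i \neq 0$.

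First, I would show that $\scrC^{\hstw}$ generates $\scrC$ in a controlled way. Using the bounded weight structure, every $c \in \scrC$ admits a finite weight tower with associated graded pieces in $\scrC^{\hsw}$. Since $\scrC^{\hsw}$ is a $1$-category, $\Hom(x, y[n]) = 0$ for $x, y \in \scrC^{\hsw}$ and $n > 0$; combined with transversality (so $H^n_t$ preserves $\scrC^{\hsw}$), the canonical fiber sequence $\tau^t_{\leq 0} c \to c \to \tau^t_{>0} c$ splits in $\scrC^{\hsw}$, and iteration yields $c \simeq \bigoplus_n H^n_t(c)[-n]$ with $H^n_t(c) \in \scrC^{\hstw}$ up to grading shifts. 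Hence every object of $\scrC$ is built from $\scrC^{\hstw}$ by cohomological shifts, grading shifts, direct sums, and finitely many extensions.

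Next, I would use this generating structure to establish uniqueness and discreteness of natural transformations. Given $\alpha, \beta \colon F \Rightarrow G$ agreeing on $\scrC^{\hstw}$, stability and $\Vect^{\gr,c}$-linearity propagate the agreement to all cohomological and grading shifts, direct sums, and hence to all of $\scrC^{\hsw}$. Across a cofiber sequence $c' \to c \to c''$ coming from the weight filtration, the difference $\alpha_c - \beta_c$ is classified by a class in the graded-degree-zero part of $\Hom^{-1}_{\scrD}(Fc'', Gc')$; tracking the combined weight/graded/cohomological bidegree and invoking the purity of $\RHom^{\gr}$ forces this class to vanish. The same kind of vanishing in higher cohomological degrees with graded degree zero rules out nontrivial higher homotopies, so $\Map(F, G)$ is discrete, and an induction along the finite weight tower yields $\alpha = \beta$ on all of $\scrC$.

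The main obstacle will be the careful bookkeeping needed to pin down the precise combined bidegree of these obstruction classes and match them to the purity vanishing range. Once those vanishing lemmas are in place, the remainder is a formal induction on the weight tower together with a routine argument extending the uniqueness of $\alpha$ to higher coherence data.
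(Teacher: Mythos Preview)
Your approach is in the right spirit but takes a genuinely harder route than the paper, and conflates two distinct ingredients.

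The paper's proof separates the two claims cleanly. For the first claim (that $\TCat_{\gr}$ is a $2$-category), the paper simply invokes the equivalence $(-)^{\hsw}:\WCat \xrightarrow{\sim} \Cat_1^{\textup{add}}$ of Elmanto--Sosnilo: bounded-weight stable $\infty$-categories whose weight heart is a $1$-category are equivalent, as an $\infty$-category, to additive $1$-categories. This immediately gives that mapping spaces in $\WCat_{\gr}$ (and hence in the full subcategory $\TCat_{\gr}$) are $1$-truncated, with no obstruction-theoretic work at all. Transversality plays no role here. For the second claim (local faithfulness), the paper uses transversality only to observe that every $c\in\scrC^{\hsw}$ decomposes as $\bigoplus c_i\langle i\rangle$ with $c_i\in\scrC^{\hstw}$, so a $2$-cell $\alpha$, already determined by $\alpha^{\hsw}$, is in turn determined by $\alpha^{\hstw}$.

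Your plan attempts to prove discreteness of $\Map(F,G)$ directly via purity and an induction along the weight tower. Two comments. First, purity is not what drives discreteness: for $c,d\in\scrC^{\hsw}$ one has $Fc,Gd\in\scrD^{\hsw}$, and $\scrD^{\hsw}$ being a $1$-category already makes $\RHom(Fc,Gd)$ discrete; no $t$-structure or purity is needed. What remains is to propagate this from $\scrC^{\hsw}$ to all of $\scrC$, and your obstruction argument along the weight filtration is essentially reproving (a piece of) the Elmanto--Sosnilo theorem by hand. This can be made to work, but the bookkeeping you flag as ``the main obstacle'' is real, and the paper avoids it entirely by citing the theorem. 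Second, your use of purity is better placed in the local-faithfulness step, but even there the paper's decomposition $c\simeq\bigoplus c_i\langle i\rangle$ in $\scrC^{\hsw}$ is a one-line replacement for your extension/obstruction analysis: once $\alpha$ is known to be determined on $\scrC^{\hsw}$, the $\Vect^{\gr,c}$-linearity forces $\alpha_c=\bigoplus \alpha_{c_i}\langle i\rangle$.

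In short: your strategy is not wrong, but it does more work than necessary and attributes the discreteness to the wrong structural feature. Citing the weight-heart equivalence collapses the first claim to a triviality and isolates transversality as the sole input for the second.
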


\begin{proof}
By \cite[Theorem 2.2.9]{elmantoNilpotentExtensionsInfty2021}, there is an equivalence of $\infty$-categories: 
\beq 
            (-)^{\hsw}: \WCat \to \Cat_1^{\textup{add}} :  (-)^{\textup{fin}}
\eeq
where $\WCat$ denotes the $\infty$-category with a bounded weight structure, whose weight heart is isomorphic to an 1-category, and $ \Cat_1^{\textup{add}}$ denote the $\infty$-category of additive $1$-categories. This naturally upgrades to an equivalence:
\beq 
(-)^{\hsw}: \WCat_\gr \to \Cat_{1,\gr}^{\textup{add}} :  (-)^{\textup{fin}}
\eeq

where 	$\WCat_\gr$ denotes the $\infty$-category of $\Vect^{\gr,c}$-linear weight categories as above, and $\Cat_{1,\gr}^{\textup{add}}$ denote the $\infty$-category of $\Vect^{\gr,c,\hsw}$-linear additive $1$-categories.
Note that $\Cat_1^{\textup{add}},\Cat_{1,\gr}^{\textup{add}} $ are equivalent to a $2$-category. Therefore for any $\scrC, \scrD \in \TCat_{\gr}$, applying $(-)^{\hsw}$, we identify $\Hom_{\TCat_\gr}(\scrC, \scrD)$ with $\{F \in \Hom_{\Cat_{1,\gr}}(\scrC^\hsw, \scrD^\hsw): (F)^\textup{fin} \textup{ is t-exact.}\}$, which is an $1$-category. Therefore $\TCat_\gr$ is equivalent to a $2$-category. 
	
Let $\alpha:F \to G:\mathscr{C} \to \mathscr{D} $ be a 2-cell in  $\TCat_\gr $. Then $\alpha$ is uniquely determined by $\alpha^{\heartsuit^w} :F^{\heartsuit^w} \to G^{\heartsuit^w}:\mathscr{C}^{\heartsuit^w} \to \mathscr{D}^{\heartsuit^w}.$ We need to show it is further determinded by $\alpha^{\heartsuit^{w,t}} :F^{\heartsuit^{w,t}} \to G^{\heartsuit^{w,t}}:\mathscr{C}^{\heartsuit^{w,t}} \to \mathscr{D}^{\heartsuit^{w,t}}.$ Indeed,  any object $c \in \mathscr{C}^{\heartsuit_w}$ is isomoprhic to an object of the form $ \oplus c_i(i)$, for $c_i \in \mathscr{C}^{\heartsuit_{w,t}}$ . Then $\alpha^{\heartsuit_w}_c = \oplus \alpha^{\heartsuit_{w,t}}_{c_i}(i)$ as elements in $\Hom_{\calC^{\hsw}}(c,c)$ (this is required by definition for $2$-cells in $\Cat_{1,\gr}$). Therefore $\alpha^{\heartsuit_w}$ is determined by $\alpha^{\heartsuit_{w,t}}$.

\end{proof}

\section{Canonical induction/restriction functors} 
\label{sec:canonical_functors}

The goal of this section is to give a combinatorial description of the category character sheaves on reductive Lie algebras, together with the induction/restriction functors between them. The main results are Proposition~\ref{prop:lie_algebra_spectral} and its variation Proposition~\ref{prop:identify_invariant_category}. At first glance, these statements follow directly from \cite{lusztigCuspidalLocalSystems1988,riderFormalityLusztigGeneralized2021}. However, the main point here is to explain all the functors involved are canonically independent of choices of Borel/parabolic subgroups. For example, we explain the parabolic induction/restriction functors between character sheaves on Lie algebra is canonically independent of choice of the parabolic subgroups. Such statement is known for perverse character sheaves (for e.g. \cite{ginzburgInductionRestrictionCharacter1993}). The argument use crucially the fact that perverse sheaves form a 1-category, which make it hard to adapt to case of dg-categories. Our approach is to use mixed geometry, which provides a canonical graded lift of dg-category of character sheaves. The pure objects therein form an $1$-category, which we leverage to show the desired independence of parabolic. In more recent work of \cite{laumonDerivedLusztigCorrespondence2023}, the derived Springer correspondence is reinterpreted using weights, we expect that some of the statements in this section can be obtained from their results via Fourier transform.

The content of this section is organized as follows:
\begin{itemize}
	\item Section~\ref{sec:canonical_induction}: show the parabolic induction functor is canonically defined for the principal block.
	\item Section~\ref{sec:spectral_springer}: combinatorial description of the principal block
	\item Section~\ref{sec:canonical_general} \& \ref{sec:spectral_general}: repeat Section~\ref{sec:canonical_induction} \& \ref{sec:spectral_springer} for general cuspidal block.
	\item Section~\ref{localizing}: reinterpret the combinatorics using facet geometry.
\end{itemize}

\subsection{Canonical induction/restriction for principal block} 
\label{sec:canonical_induction} 
 Let $S$ be the set of irreducible character sheaves on $\frg/G$. 
 Put $\Ch_c(\frg/G)= \Sh_{c,S}(\frg/G)$ as in Appendix~\ref{secA:graded_lift}. 
 Then by \cite[Prop 3.5]{riderFormalityLusztigGeneralized2021}, 
 $S$ satisfies the Hom-purity assumption~(\ref{eqn:purity}). 
Denote $\Ch_{\gr,c}(\frg/G):= \Sh_{\gr,c,S}(\frg/G)$. Then $\Ch_{\gr,c}(\frg/G)^{\heartsuit_{w,t}} = \Sh_{c,S}(\frg/G)^{\heartsuit_t}$ \footnote{This has used the fact that $\Sh_{c,S}(\frg/G)^{\heartsuit_t}$ is actually semi-simple. See Example~\ref{eg:graded_sheaves}.} the category of perverse character sheaves. We have an equivalence 
 $\Ch_{\gr,c}(\frg/G)  \otimes_{\Vect^{\gr,c}} \Vect^{c} \simeq \Ch_c(\frg/G).$ 

Let $T \subset G$ be a maximal torus, and $B$ be a Borel subgroup of $G$, containing $T$. Denote by $\frt,\frb,\frg$ the corresponding Lie algebras, and by $\frt^\reg, \frb^\reg, \frg^\reg$ the regular semisimple elements. We have a commutative diagram:
$$
\xymatrix{ \frt^{\reg}/T  \ar[d]^{j_T}   \ar[rr]^{f}  &    &  \frg^{\reg}/G \ar[d]^{j_G}     \\
			\frt/T     &     \frb/B	 \ar[l]_{q} \ar[r]^{p}	&      \frg/G
}
$$
We denote the functors:
$$\Ind_B=\Ind_{T \subset B}^G := p_!q^* : \Ch_{\gr,c}(\frt/T) \to \Ch_{\gr,c}(\frg/G)$$
$$\Ind_B^{\hst}=\Ind_{T \subset B}^{G,\hst} := p_!q^* : \Ch_{c}(\frt/T)^\hst \to \Ch_{c}(\frg/G)^\hst$$
There is a canonical isomorphism of functors:
$$ \alpha^0_B : \Ind_B^{\heartsuit_t} = p_!q^* \simeq  j_{G!*}   f_*j_{T}^* : \Ch_c({\frt/T})^{\heartsuit_t} \to \Ch_c({\frg/G})^{\heartsuit_t} $$
Now let $B_1,B_2$ be two Borel subgroups containing $T$. Then we have a canonical isomorphism of functor:
$$\alpha^0_{B_1,B_2}:=  (\alpha^0_{B_2})^{-1} \circ  \alpha^0_{B_1}: \Ind_{ B_1}^{\heartsuit_t} \to \Ind_{ B_2}^{\heartsuit_t}$$

\begin{prop}
	The 2-cell $\alpha^0_{B_1,B_2}$ in $\Cat_1$  is in the image of the natural map:
	\beq
	\label{eq:restriction_alpha}
	(-)^{\hstw}: \Hom_{\TCat_\gr}(\Ind_{B_1}, \Ind_{B_2}) \to \Hom_{\Cat_1}(\Ind_{B_1}^{\hst}, \Ind_{B_2}^{ \heartsuit_{t}})  
	\eeq
\end{prop}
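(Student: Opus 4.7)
My plan is to reduce the lifting problem through an intermediate Borel-independent functor. Set $K := j_{G!*} f_* j_T^* : \Ch_{\gr,c}(\frt/T) \to \Ch_{\gr,c}(\frg/G)$; if I lift each $\alpha^0_{B_i}$ to an isomorphism $\tilde\alpha_{B_i}: \Ind_{B_i} \isom K$ in $\TCat_\gr$, then $\tilde\alpha_{B_2}^{-1} \circ \tilde\alpha_{B_1}$ is a 2-cell in $\TCat_\gr$ whose image under $(-)^{\hst}$ recovers $\alpha^0_{B_1,B_2}$ by construction. So the task reduces to lifting a single $\alpha^0_B$.

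First I would verify that $\Ind_B = p_! q^*$ and $K$ are both trans-exact: $q$ is smooth and $p$ is proper semismall (Grothendieck--Springer), so by the Decomposition Theorem $p_!$ preserves purity and $\Ind_B$ is trans-exact; similarly $f$ is finite \'etale, $j_T, j_G$ are open embeddings, and $j_{G!*}$ preserves purity, so $K$ is trans-exact. Both therefore send pure perverse character sheaves to pure perverse of a common weight. Following the proof of Proposition~\ref{prop:transversal_to_additive}, I would define the candidate 2-cell first on the weight-$t$ heart by $\tilde\alpha^{\hstw}_{B,c} := \alpha^0_{B,c}$ for $c \in \Ch_{\gr,c}(\frt/T)^{\hstw}$ (this is automatically $\Cat_{1,\gr}$-linear, since both target objects are pure perverse of a common weight), then extend to the weight heart via $\tilde\alpha^{\hsw}_{B,c} := \bigoplus_i \tilde\alpha^{\hstw}_{B,c_i}\langle i\rangle$ for the canonical decomposition $c \simeq \bigoplus_i c_i\langle i\rangle$ of a weight-heart object. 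The Elmanto--Khan--Sosnilo equivalence $\WCat_\gr \simeq \Cat_{1,\gr}^{\textup{add}}$ used in the proof of Proposition~\ref{prop:transversal_to_additive} then produces a unique lift $\tilde\alpha_B$ in $\WCat_\gr$, which automatically lies in $\TCat_\gr$ because $\Ind_B$ and $K$ are $t$-exact.

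The main obstacle is verifying $(\tilde\alpha_B)^{\hst} = \alpha^0_B$: the two natural transformations agree by construction on $\hstw$, but $\hst$ is strictly larger (it contains mixed perverse sheaves with non-trivial weight filtrations), and a priori two natural transformations agreeing on $\hstw$ need not agree on all of $\hst$. I would resolve this by induction on the length of the weight filtration of $d \in \Ch_{\gr,c}(\frt/T)^\hst$: both transformations are natural with respect to the weight-filtration morphisms, and since $\Ind_B$ and $K$ commute with taking weight-graded pieces (by weight-exactness), an inductive comparison reduces the equality to the $\hstw$-case where it holds tautologically. A cleaner alternative, if feasible, is to construct $\tilde\alpha_B$ directly from the adjunction $(j_T^*, j_{T*})$ and the properness of $p$, so that compatibility with the formula $\alpha^0_B$ on perverse sheaves becomes manifest from the definition and bypasses the filtration argument entirely.
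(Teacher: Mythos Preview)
Your strategy has a genuine gap at the first step: the intermediate functor $K = j_{G!*} f_* j_T^*$ does not exist as a functor on the full transversal category $\Ch_{\gr,c}(\frt/T)$. The intermediate extension $j_{G!*}$ is only defined on perverse sheaves, not on the stable category---it is the image of the natural transformation $j_{G!} \to j_{G*}$, which makes sense object-by-object in the abelian heart but is not functorial on complexes. Hence $K$ is only a functor $\Ch_c(\frt/T)^{\hst} \to \Ch_c(\frg/G)^{\hst}$, not an object of $\TCat_\gr$, and there is no 2-cell $\tilde\alpha_{B_i}\colon \Ind_{B_i} \to K$ in $\TCat_\gr$ to speak of. You might try to salvage this by defining $K$ on the weight heart as an additive functor and then invoking the equivalence $\WCat_\gr \simeq \Cat^{\textup{add}}_{1,\gr}$; but morphisms in the weight heart between $\CC_{\frt/T}$ and $\CC_{\frt/T}\{i\}$ are the higher groups $H^i_T(\frt)$, and specifying how $K$ acts on these amounts to choosing an $H^*_T(\frt)$-module structure on the Springer sheaf $j_{G!*} f_* \CC_{\frt^{\reg}/T}$. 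There is no \emph{a priori} canonical such choice independent of $B$---the obvious candidates come from $\Ind_{B_1}$ or $\Ind_{B_2}$, and proving these agree is exactly the content of the proposition.

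The paper avoids any intermediate functor. It constructs the 2-cell $\alpha\colon \Ind_{B_1} \to \Ind_{B_2}$ directly on the single generator $\CC_{\frt/T}$, using that the inclusion of $\hst$ into the graded category induces a bijection on $\Hom^0$ between the two Springer sheaves (both sides being $\CC[W]$), and then checks compatibility with morphisms $b\colon \CC_{\frt/T} \to \CC_{\frt/T}\{i\}$ by an explicit calculation. After applying $\oblv$, the relevant $\Hom^*$ is identified with $\CC[W] \,\#\, H^*(\frt/T)$; restriction to the regular locus $\frg^{\reg}$ pins down the left and right $H^*(\frt/T)$-actions and shows that $(\alpha_{\CC_{\frt/T}})^{\oblv}$ corresponds to $1 \otimes 1$, which makes the required square commute. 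This concrete comparison over the regular locus is the missing ingredient in your proposal: it is precisely what guarantees that the two $H^*_T(\frt)$-actions intertwine, and without it the lifting argument cannot proceed.
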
	

\begin{proof} The data of a natural transformation between $\alpha:\Ind_{B_1} \to \Ind_{B_2} $ consist of the following:
	\begin{itemize}
		\item For any $F \in \Ch_{\gr,c}(\frt/T)$, a morphism $\alpha_F \in  \Hom^0_{\Ch_{\gr,c}(\frg/G)}(\Ind_{B_1}(F), \Ind_{B_2}(F))$.
		\item Such that for any $b:F_1 \to F_2$ in $\Ch_{\gr,c}(\frt/T)$ the diagram commutes:
\beq \xymatrix{ \Ind_{B_1}(F_1)  \ar[r]^{\alpha_{F_1}} \ar[d]^{\Ind_{B_1}b} & \Ind_{B_2}(F_2) \ar[d]^{\Ind_{B_2}b}   \\
	\Ind_{B_1}(F_2) \ar[r]^{\alpha_{F_2}}	     &    \Ind_{ B_2}(F_2) }
\eeq
\end{itemize}
Now since $\CC_{\frt/T}$ generate $\Ch_{\gr,c}(\frt/T)$ under direct sum, direct summand and $\Vect^{\gr,c}$-action. Therefore it suffices to construct $\alpha_{F}$ for $F=\CC_{\frt/T}$, such that  
for any morphism $b: \CC_{\frt/T} \to \CC_{\frt/T}\{i\} $, the diagram in $\Ch_{\gr,c}(\frg/G)$ commutes:
\beq
\label{eq:alpha_commute} \xymatrix{ \Ind_{B_1}(\CC_{\frt/T}) \ar[r]^{\alpha_{\CC_{\frt/T}}} \ar[d]^{\Ind_{B_1}b} & \Ind_{B_2} (\CC_{\frt/T}) \ar[d]^{\Ind_{B_2}b}   \\
		\Ind_{B_1}(\CC_{\frt/T}\{i\}) \ar[r]^{\alpha_{\CC_{\frt/T}}\{i\}}	     &    \Ind_{B_2}(\CC_{\frt/T}\{i\}) }
	\eeq
 Now the natural functor $\Ch_c(\frg/G)^{\hst} \simeq \Ch_{\gr}^{\hstw}(\frg/G) \to \Ch_{\gr,c}(\frg/G)$ induces a bijection on $\Hom$-set (both of them are isomorphic to $\CC[W]$):
  \beq  i:\Hom^0_{\Ch_c(\frg/G)^{\hst}}(\Ind_{B_1}^{\hst}\CC_{\frt/T}, \Ind_{B_2}^{\hst}\CC_{\frt/T})    \to     \Hom^0_{\Ch_{\gr,c}(\frg/G)}(\Ind_{B_1}\CC_{\frt/T}, \Ind_{B_2}\CC_{\frt/T})  \eeq	
Define $\alpha_{\CC_{\frt/T}}= i(\alpha^0_{B_1,B_2})$, we are left to show the commutativity of the diagram \eqref{eq:alpha_commute}. Applying the functor $- \otimes_{\Vect^{\gr,c}} \Vect^c$ it suffices to show the commutativity of the diagram:
\beq
\label{eq:alpha_commute_oblv} \xymatrix{ \Ind^{\oblv}_{B_1}(\CC_{\frt/T}) \ar[r]^{(\alpha_{\CC_{\frt/T}})^\oblv} \ar[d]^{(\Ind_{B_1}b)^\oblv} & \Ind^{\oblv}_{B_2} (\CC_{\frt/T}) \ar[d]^{(\Ind_{B_2}b)^\oblv}   \\
	\Ind^{\oblv}_{B_1}(\CC_{\frt/T}[i]) \ar[r]^{(\alpha_{\CC_{\frt/T}})^\oblv[i]}	     &    \Ind^{\oblv}_{B_2}(\CC_{\frt/T}[i]) }
\eeq
Here we have used the fact that $\oblv (\CC_{\frt/T}\{i\})  \simeq \CC_{\frt/T}[i]$, and $ \Ind^{\oblv}_{B_j}$ can be identified as an element of the form $\Ind^{\oblv}_{B_j}(b')$ for some $b' \in \Hom^0_{\Ch_{c}(\frt/T)}(\CC_{\frt/T}, \CC_{\frt/T}[i])$ ($b'$ is independent of $j=1,2$). 

Now consider the commutative diagram :
\beq 
\xymatrix{ \CC[W] \otimes H^*(\frt/T)  \ar[d]^{\simeq} \ar[rd] \\ \Hom^0_{\frg/G}(\Ind^{\oblv}_{B_1}(\CC_{\frt/T}),\Ind^{\oblv}_{B_2}(\CC_{\frt/T})) \otimes \Hom^*_{\frt/T}(\CC_{\frt/T}, \CC_{\frt/T}) \ar[r]^-{\theta} \ar[d] &  \Hom^*_{\frg/G}(\Ind^{\oblv}_{B_1}(\CC_{\frt/T}), \Ind^{\oblv}_{B_2}(\CC_{\frt/T}))  \ar[d]  \\
	\Hom^0_{\frg/G}(\Ind^{\oblv}_{B_1}(\CC_{\frt^{\reg}/T}),\Ind^{\oblv}_{B_2}(\CC_{\frt^{\reg}/T})) \otimes \Hom^*_{\frt^{\reg}/T}(\CC_{\frt^{\reg}/T}, \CC_{\frt^{\reg}/T}) \ar[r]^-{\theta^\reg} \ar[d]^{\simeq}  &  \Hom^*_{\frg^{\reg}/G}(\Ind^{\oblv}_{B_1}(\CC_{\frt^{\reg}/T}), \Ind^{\oblv}_{B_2}(\CC_{\frt^{\reg}/T}))     \\
	\CC[W] \otimes H^*(\frt^{\reg}/T) \ar[ur]  
}
\eeq
where the vertical arrows are induced by the pullback $\frt^{\reg}/T \to \frt/T$.
The map $\theta^{\reg}$ is an isomorphism since both $\Ind^{\oblv}_{B_1}$ and $\Ind^{\oblv}_{B_1}$ can be identified as the push forward along the $W$-cover $\frt^{\reg}/T \to \frg^{\reg}/G$.
Furthermore, under the identification: 
\beq \CC[W] \otimes H^*(\frt^{\reg}/T) \simeq \CC[W] \# H^*(\frt^{\reg}/T) \simeq 
\Hom^*_{\frg^{\reg}/G}(\Ind^{\oblv}_{B_1}(\CC_{\frt^{\reg}/T}), \Ind^{\oblv}_{B_2}(\CC_{\frt^{\reg}/T})) 
\eeq

The natural left and right action of  $H^*(\frt^{\reg}/T)$ on  $\Hom^*(\Ind^{\oblv}_{B_1}(\CC_{\frt^{\reg}/T}), \Ind^{\oblv}_{B_2}(\CC_{\frt^{\reg}/T}))$ is identified with the natural left and right $H^*(\frt^{\reg}/T)$-action on $\CC[W] \# H^*(\frt^{\reg}/T)$. 

Moreover, $\theta$ is also an isomorphism (see  \eqref{eq:induct_iso_prin} below), this implies similarly under the identification 
\beq \CC[W] \# H^*(\frt/T) \simeq \CC[W] \otimes H^*(\frt/T)   \simeq 
\Hom^*_{\frg/G}(\Ind^{\oblv}_{B_1}(\CC_{\frt/T}), \Ind^{\oblv}_{B_2}(\CC_{\frt/T})) 
\eeq
The left and right action of  $H^*(\frt/T)$ on  $\Hom^*(\Ind^{\oblv}_{B_1}(\CC_{\frt/T}), \Ind^{\oblv}_{B_2}(\CC_{\frt/T}))$ is identified with the left and right $H^*(\frt/T)$-action on $\CC[W] \# H^*(\frt/T)$. 
Now $ (\alpha_{\CC_{\frt/T}})^\oblv$ considered as an element in $\Hom^0_{\frg/G}(\Ind^{\oblv}_{B_1}(\CC_{\frt/T})$ is identified with $1 \otimes 1 \in  \CC[W] \# H^*(\frt/T)  $ (since by construction $(\alpha_{\CC_{\frt/T}})^\oblv$ is identified with the identity map between $j_{G!*} f_* \CC_{\frt^\reg/T} $ ) . Therefore
the square \eqref{eq:alpha_commute_oblv} commutes. By definition, the under $(-)^{\hstw}$, the image of $\alpha$ defined above is $\alpha^0_{B_1,B_2}$.
\end{proof}

Let $ \alpha_{B_1,B_2}$ be the preimage of $\alpha_{B_1,B_2}^{0}$ in (\ref{eq:restriction_alpha}), which is necessarily unique by Proposition~\ref{prop:transversal_to_additive}. Uniqueness also imply that  $\alpha_{B,B}= Id_{\Ind_B},$ and $ \alpha_{B_2,B_3} \circ \alpha_{B_1,B_2} = \alpha_{B_1,B_3}.$ 
Therefore we can view $\Ind_B,\alpha_{B_1,B_2}$ as defining canonically a functor 
\beq 
\label{eq:canonical_induction}
\Ind_T^G: \Ch_{\gr,c}(\frt/T) \to \Ch_{\gr,c}(\frg/G),
\eeq
 which we refer as the \textit{canonical induction} functor (for the principal block).

\subsection{Spectral identification via Springer theory}
\label{sec:spectral_springer}
In this section, we identify the canonical induction functor using explicit Lie theoretic data. 

Let $\Ch_{\gr,c}(\frg/G)_{\prin} \subset \Ch_{\gr,c}(\frg/G)$ be the full subcategory generated by Springer sheaves under $\Vect^\gr$-action. Fix a maximal torus $T$, for a choice of Borel $B$ containing $T$. By \cite[Prop 3.2]{lusztigCuspidalLocalSystems1988}, we have an isomorphism: 
\beq \label{eq:induct_iso_prin}
 \End^{0,\gr}(\Ind^{\gr}_B(\CC_{\frt/T})) \otimes  \End^{*,\gr}({\CC_{\frt/T}}) \to \End^{*,\gr}(\Ind^{\gr}_B(\CC_{\frt/T})) \eeq

This gives isomorphisms of graded dg algebras:
$$   \C[W] \# \Sym (\frt^*[-2] \langle -2 \rangle ) \simeq \End^{*,\gr}(\Ind^{\gr}_B(\CC_{\frt/T}))  \simeq \REnd^\gr(\Ind^{\gr}_B({\CC_{\frt/T}})) $$
where the last isomorphism is by formality. Therefore we have an equivalence of graded transversal categories:
$$  \LL_{\frb \subset \frg,\prin}:   \C[W] \# \Sym (\frt^*[-2]\langle -2 \rangle)  \perf^\gr \simeq \Ch_{\gr,c}(\frg/G)_{\prin} $$

Given two Borel subgroup $B_1, B_2$, mimic the construction in Section~\ref{sec:canonical_induction}, we can define a $2$-cell $\beta_{B_1,B_2}: \LL_{\frb_1 \subset \frg,\prin} \to \LL_{\frb_2 \subset \frg,\prin}$. Therefore we have a canonical functor 
\beq
  \LL_{\frg,\prin}:  \C[W] \# \Sym (\frt^*[-2]) \perf^\gr \simeq \Ch_{\gr,c}(\frg/G)_{\prin} .
  \eeq
   Moreover, the diagram naturally commutes:
\beq  
\xymatrix{ \Sym (\frt^*[-2]\langle -2 \rangle) \perf^\gr \ar[d]^{\Ind} \ar[r]^-{\LL_{\frt}}  & \Ch_{\gr,c}(\frt/T) \ar[d]^{\Ind_T^G} \\
\C[W] \# \Sym (\frt^*[-2]\langle -2 \rangle) \perf^\gr \ar[r]^-{\LL_{
		\frg,\prin}} &  \Ch_{\gr,c}(\frg/G)_{\prin}
}
\eeq
Let us spell out this commutativity more explicitly: this means for any choice of Borel $B$ and $R$, we have a $2$-cell $\gamma_{B,R}$:
\beq  
\xymatrix{ \Sym (\frt^*[-2]\langle -2 \rangle) \perf^\gr \ar[d]^{\Ind} \ar[r]^-{\LL_\frt}  & \Ch_{\gr,c}(\frt/T) \ar[d]^{\Ind_{T \subset R}^G} \\
	\C[W] \# \Sym (\frt^*[-2]\langle -2 \rangle) \perf^\gr \ar[r]^-{\LL_{\frb \subset \frg,\prin}} &  \Ch_{\gr,c}(\frg/G)_{\prin}
}
\eeq
and it is compatible with composition with $\beta_{B,B'}$ on the bottom, and with $\alpha_{R,R'}$ on the right.

\subsection{Canonical induction/restriction for general blocks}
\label{sec:canonical_general} In this and the next section, we describe similar results for generalized Springer theory, see \cite{lusztigCuspidalLocalSystems1988} for reference. The proofs are similar and we shall omit them.
 Firstly, we fix $\kappa=(M, O, \mathcal{L})$ a cuspidal data of $G$, i.e, $M$ is a Levi subgroup of $G$, $O$ a nilpotent orbit of $M$, and $\calL$ a cuspidal local system on $O$. Put $\frz_\kappa = \frz_{\frakm}$ the center of $\frakm$, and $W_G^\kappa= N_G(M)/M$ the relative Weyl group. 
Denote $\Ch_{\gr,c}(\frakm/M)_\kappa \subset \Ch_{\gr,c}(\frakm/M) $ the subcategory generated by $\overline{\calL}:= \IC( \frakm/M ,\CC \frz_\kappa \boxtimes \calL) $, and denote by $\Ch_{\gr,c}(\frg/G)_\kappa \subset \Ch_{\gr,c}(\frg/G)$ the full subcategory generated by $\Ind_{L \subset Q}^G(\overline{\calL})$ (where $Q$ is some parabolic subgroup of $G$, with Levi factor $L$). 
Now put $\frz_\kappa^{\reg}= \{ x \in \frz_\frakm: C_G(x)=M\}$, 
put $\frakm^\reg_\kappa= \frz_\frakm^{\reg} \times O$,
and $\frg^{\reg}_{\kappa}/G$ the image of $\frakm^{\reg}_{\kappa}/M$ under the map $\frakm/M \to \frg/G$. We have commutative diagram:
$$
\xymatrix{ \frakm^{\reg}_\kappa/M  \ar[d]^{j_L}   \ar[rr]^{f}  &    &  \frg^{\reg}_\kappa/G \ar[d]^{j_G}     \\
	\frakm/M     &     \frq/Q	 \ar[l]_{q} \ar[r]^{p}	&      \frg/G
}
$$
There is a canonical isomorphism of functors:
\beq \alpha^0_{Q,\kappa} : \Ind_Q^{\heartsuit_t} := \Ind_{M \subset Q}^{G,\heartsuit_t} := p_!q^* \simeq  j_{G!*}   f_*j_{M}^* : \Ch_c({\frakm/M})^{\heartsuit_t}_\kappa \to \Ch_c({\frg/G})^{\heartsuit_t}_\kappa 
\eeq
\beq 
\alpha'^{0}_{Q,\kappa} : \Res_Q^{\heartsuit_t} := \Res_{M \subset Q}^{G,\heartsuit_t} := q_*p^! \simeq  j_{M!*}   f^*j_{G}^* : \Ch_c({\frg/G})^{\heartsuit_t}_\kappa \to \Ch_c({\frakm/M})^{\heartsuit_t}_\kappa 
\eeq

Now let $Q_1,Q_2$ be two Borel subgroups containing $M$. Then we have a canonical isomorphism of functor:
 \beq  \alpha^0_{Q_1,Q_2,\kappa}:=  (\alpha^0_{Q_2,\kappa})^{-1} \circ  \alpha^0_{Q_1,\kappa}: \Ind_{M \subset Q_1}^{G,\heartsuit_t} \to \Ind_{M \subset Q_2}^{G,\heartsuit_t}
 \eeq
 
  \beq  \alpha'^0_{Q_1,Q_2,\kappa}:=  (\alpha'^0_{Q_2,\kappa})^{-1} \circ  \alpha'^0_{Q_1,\kappa}: \Res_{M \subset Q_1}^{G,\heartsuit_t} \to \Res_{M \subset Q_2}^{G,\heartsuit_t}
 \eeq

\begin{prop}
	\label{prop:alpha_image_general}
\begin{enumerate}
	\item 	The 2-cell $\alpha^0_{Q_1,Q_2,\kappa}$ in $\Cat_1$  is in the image of the natural map:
	\beq
	\label{eq:restriction_beta}
	\Hom_{\TCat}(\Ind_{M,Q_1,\gr}^{G}, \Ind_{M,Q_2,\gr}^{G}) \to \Hom_{\Cat_1}(\Ind_{M,Q_1}^{G, \hst}, \Ind_{M,Q_2}^{G, \heartsuit_{t}})  
	\eeq
	\item 	The 2-cell $\alpha'^0_{Q_1,Q_2,\kappa}$ in $\Cat_1$  is in the image of the natural map:
	\beq	\Hom_{\TCat}(\Res_{M,Q_1,\gr}^{G}, \Res_{M,Q_2,\gr}^{G}) \to \Hom_{\Cat_1}(\Res_{M,Q_1}^{G, \hst}, \Res_{M,Q_2}^{G, \heartsuit_{t}})  
	\eeq
\end{enumerate}

\end{prop}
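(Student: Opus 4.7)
The plan is to run the argument of the principal-block case essentially verbatim, making the following substitutions: the generator $\CC_{\frt/T}$ is replaced by $\ol{\calL} = \IC(\frakm/M,\CC_{\frz_\kappa}\boxtimes \calL)$, the Weyl group $W$ by the relative Weyl group $W_G^\kappa=N_G(M)/M$, and the regular torus locus $\frt^\reg/T$ by $\frakm^\reg_\kappa/M = (\frz_\kappa^\reg\times O)/M$. The key input replacing \eqref{eq:induct_iso_prin} is the generalized Springer isomorphism of \cite{lusztigCuspidalLocalSystems1988}, which gives an isomorphism of graded dg algebras
\[
\REnd^\gr\big(\Ind_{M\subset Q,\gr}^G(\ol{\calL})\big) \simeq \CC[W_G^\kappa]\#\Sym(\frz_\kappa^*[-2]\langle -2\rangle),
\]
together with the companion statement that the total $\Hom$-space between the $Q_1$- and $Q_2$-inductions of $\ol{\calL}$ is pure and is described by the same smash product. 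This purity implies that the forgetful map from graded $\Hom$ to ungraded $\Hom$ is a bijection in cohomological degree zero, so one can define the candidate 2-cell $\alpha_{\ol{\calL}}$ as the unique graded lift of $\alpha^0_{Q_1,Q_2,\kappa}(\ol{\calL})$.

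For part (1), since $\ol{\calL}$ generates $\Ch_{\gr,c}(\frakm/M)_\kappa$ under direct summand and $\Vect^{\gr,c}$-action, it suffices to check that $\alpha_{\ol{\calL}}$ commutes with every graded self-map $b:\ol{\calL}\to\ol{\calL}\{i\}$. Applying $-\otimes_{\Vect^{\gr,c}}\Vect^c$ and restricting to the regular locus $\frg^\reg_\kappa/G$, both $\Ind^\oblv_{Q_i}(\ol{\calL})$ become pushforwards along the $W_G^\kappa$-étale cover $f:\frakm^\reg_\kappa/M\to \frg^\reg_\kappa/G$, so the relevant $\Hom$-space is identified with $\CC[W_G^\kappa]\# H^*(\frakm^\reg_\kappa/M)$. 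Under this identification $(\alpha_{\ol{\calL}})^\oblv$ corresponds to $1\otimes 1$ -- it is by construction the identity map of $j_{G!*}f_* j_M^*(\ol{\calL})$ -- and therefore is bi-$H^*(\frakm/M)$-equivariant, which is exactly the required commutativity with $b^\oblv$. Part (2) then follows by adjunction: the graded parabolic induction $\Ind_{M\subset Q,\gr}^G$ is left adjoint to the graded restriction $\Res_{M\subset Q,\gr}^G$ on cuspidal $\kappa$-blocks, and mate calculus converts the 2-cell $\alpha_{Q_1,Q_2,\kappa}$ produced in part (1) into a 2-cell between the two restrictions, which lifts $\alpha'^0_{Q_1,Q_2,\kappa}$ because passage to the $t$-heart commutes with taking mates. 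Alternatively, one may run the same regular-locus argument directly, using the identification $\Res_Q^{\hst}\simeq j_{M!*}f^* j_G^*$ on the cuspidal block.

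The main obstacle I anticipate is not logical but notational: once the generalized Springer endomorphism isomorphism above is granted in the \emph{graded} setting with its purity, the rest of the argument is a straight transcription of the principal-block proof, and one must simply keep track of the fact that everywhere $\frt^\reg$ appeared we now have $\frz_\kappa^\reg\times O$ (whose $M$-equivariant cohomology is still a polynomial ring, namely $\Sym(\frz_\kappa^*[-2]\langle -2\rangle)$, by cuspidality of $\calL$ and vanishing of the odd cohomology of the nilpotent support). A secondary point to verify is that the adjunction $\Ind\dashv\Res$ preserves the cuspidal $\kappa$-block and is $\Vect^{\gr,c}$-linear; both follow from proper/smooth base change applied to the correspondence $\frakm/M\xleftarrow{q}\frq/Q\xrightarrow{p}\frg/G$, combined with the cuspidality of $\calL$ which ensures $p_!q^*$ sends the $\kappa$-block to the $\kappa$-block.
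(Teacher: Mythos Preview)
Your proposal is correct and is precisely what the paper intends: the paper omits the proof entirely, stating only that ``the proofs are similar and we shall omit them,'' and your outline is exactly the transcription of the principal-block argument with the substitutions $\CC_{\frt/T}\rightsquigarrow\ol{\calL}$, $W\rightsquigarrow W_G^\kappa$, $\frt^\reg\rightsquigarrow\frz_\kappa^\reg\times O$, using Lusztig's generalized Springer isomorphism \eqref{eq:induction_isomorphism} in place of \eqref{eq:induct_iso_prin}. One minor remark: for part (2) your direct regular-locus argument is closer to the paper's spirit than the mate-calculus route, since the mate of $\alpha^0_{Q_1,Q_2,\kappa}$ is not \emph{a priori} the 2-cell $\alpha'^0_{Q_1,Q_2,\kappa}$ defined via $j_{M!*}f^*j_G^*$---this identification would itself require the regular-locus comparison you already have in hand.
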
	
Similarly to (\ref{eq:canonical_induction}), we denote the canonical functors:
\beq
\Ind_{M,\kappa}^{G}: \Ch_{\gr,c}(\frakm/M)_{\kappa} \rightleftarrows \Ch_{\gr,c}(\frg/G)_{\kappa}: \Res_{M,\kappa}^G .
\eeq

Now let $L$ be a Levi subgroup of $G$, and denote $\frK_L$ the set of cuspidal data of $L$ (up to conjugacy). We have a natural inclusion $\frK_L \subset \frK_G$.  Let $\kappa =(M,O,\calL) \in \frK_L$ be a cuspidal data of $L$, we can also view $\kappa$ as a cuspidal data for $G$. A similar argument will give a canonically defined functor:
\beq
\Ind_{L,\kappa}^{G}: \Ch_{\gr,c}(\frl/L)_{\kappa} \rightleftarrows \Ch_{\gr,c}(\frg/G)_{\kappa}: \Res_{L,\kappa}^G.
\eeq
Taking direct sum over all cuspidal data, we have functors:
\beq
 \Ind_L^G: \Ch_{\gr,c}(\frl/L)  \simeq \oplus_{\kappa \in \frK_L} \Ch_{\gr,c}(\frl/L)_{\kappa} \rightleftarrows \Ch_{\gr,c}(\frg/G) \simeq \oplus_{\kappa \in \frK_G} \Ch_{\gr,c}(\frg/G)_{\kappa} : \Res_{L}^G,
\eeq
where $\Res^G_L$ sends all summands $\kappa \in \frK_G \backslash \frK_L$ to $0$.
\begin{rmk}
	\label{rmk:canonical}
	Similar argument as in Proposition~\ref{prop:alpha_image_general} above shows that for any parabolic $P$ of $L$, there are canonical isomorphisms of functors $\Ind_L^G \simeq \Ind_{L \subset P}^G$ and $\Res_L^G \simeq \Res_{L \subset P}^G$. In this sense, we see that the functors $\Ind_L^G$ and $\Res_L^G$ are independent of the choice of parabolic subgroup, and therefore we refer $\Ind_L^G, \Res_L^G$ as the \textit{canonical induction and restriction functor}. 
\end{rmk}

\subsection{Spectral identification via generalized Springer theory}
\label{sec:spectral_general}
We continue to give an analogous spectral description as in Section~\ref{sec:spectral_springer}.  Let $\kappa=(M,O,\calL)$ be a cuspidal datum of $G$, and $Q$ be a parabolic subgroup with Levi factor $L$. We have a natural map:
\beq
\label{eq:induction_isomorphism}
  \End^{0,*}(\Ind^{\gr}_Q(\overline{\calL})) \otimes  \End^{*,\gr}({\overline{\calL}}) \to \End^{*,\gr}(\Ind^{\gr}_Q(\overline{\calL})) 
  \eeq 

By \cite[Prop 3.2]{lusztigCuspidalLocalSystems1988}, \eqref{eq:induction_isomorphism} is an isomorphism and can be identified as maps of graded vector spaces:
\beq
  \C[W_G^\kappa] \otimes  \Sym (\frz_\kappa^*[-2] \langle -2 \rangle )  \simeq \End^{*,\gr}(\Ind^{\gr}_Q({\overline{\calL}})) 
\eeq
Moreover, the algebra structure on $\REnd^\gr(\Ind^{\gr}_Q({\overline{\calL}})$ can be identified as the smash product on the LHS, therefore, we have an isomorphism of graded dg algebras:

\beq
  \C[W_G^\kappa] \# \Sym (\frz_\kappa^*[-2]  \langle -2 \rangle)  \simeq \End^{*,\gr}(\Ind^{\gr}_Q({\overline{\calL}}))  \simeq \REnd^\gr(\Ind^{\gr}_Q({\overline{\calL}}))
\eeq
which further gives an equivalence of transversal categories:
\beq
 \LL_{\frq \subset \frg,\kappa}:   \C[W_G^\kappa] \# \Sym (\frz_\kappa^*[-2]  \langle -2 \rangle)  \perf^\gr \simeq \Ch_{\gr,c}(\frg/G)_{\kappa} 
\eeq
Where $\modu^\gr_{\perf}$ denotes the category of graded perfect modules.
Now for two parabolic subgroups $Q_1, Q_2$ containing $M$, there is a $2$-cell $\beta_{Q_1,Q_2}: \LL_{Q_1 \subset G,\kappa} \to \LL_{Q_2 \subset G,\kappa}$ defined similarly. Therefore we have a canonical functor:
\beq 
\label{eq:kappa_spectral}
 \LL_{\frg,\kappa}:  \C[W_G^\kappa] \# \Sym (\frz_\kappa^*[-2]) \perf^\gr \simeq \Ch_{\gr,c}(\frg/G)_{\kappa} 
\eeq

Moreover, the diagram naturally commutes:
\beq  
\xymatrix{ \Sym (\frz_\kappa^*[-2] \langle -2 \rangle) \perf^\gr \ar@<-.5ex>[d]_{\Ind} \ar[r]^-{\LL_{M,\kappa}}  & \Ch_{\gr,c}(\frakm/M)_\kappa \ar@<-.5ex>[d]_{\Ind_{M,\kappa}^G} \\
	\C[W_G^\kappa] \# \Sym (\frz_\kappa^*[-2] \langle -2 \rangle) \perf^\gr \ar@<-.5ex>[u]_{\Res} \ar[r]^-{\LL_{G,\kappa}} &  \Ch_{\gr,c}(\frg/G)_{\kappa} \ar@<-.5ex>[u]_{\Res_{M,\kappa}^G}
}
\eeq

Taking direct sum over all cuspidal data $\kappa \in \frK_G$ of \eqref{eq:kappa_spectral}, we have

\begin{prop}
	\label{prop:lie_algebra_spectral}
There is a canonical equivalence:
\beq
\label{eq:spectral}
\LL_{\frg}:  \oplus_{\kappa \in \frK} \C[W_G^\kappa] \# \Sym (\frz_{\kappa}^*[-2] \langle -2 \rangle) \perf^\gr \simeq \Ch_{\gr,c}(\frg/G) .
\eeq

Moreover, let $L$ be a Levi subgroup of $G$ containing $T$, the diagram naturally commutes:
\beq  
\xymatrix{ \oplus_{\kappa \in \frK_L} \C[W_L^\kappa] \#  \Sym (\frz_\kappa^*[-2] \langle -2 \rangle) \perf^\gr \ar@<-.5ex>[d]_{\Ind} \ar[r]^-{\LL_{\frl}}  & \Ch_{\gr,c}(\frl/L) \ar@<-0.5ex>[d]_{\Ind_{L}^G} \\
\oplus_{\kappa \in \frK_G}	\C[W_G^\kappa] \# \Sym (\frz_\kappa^*[-2] \langle -2 \rangle) \perf^\gr  \ar[r]^-{\LL_{\frg}} \ar@<-.5ex>[u]_{\Res} &  \Ch_{\gr,c}(\frg/G) \ar@<-.5ex>[u]_{\Res^G_L}
}
\eeq
where $\Ind$ and $\Res$ are the induction and restriction along $W_L^\kappa  \subset W_G^\kappa$.
\end{prop}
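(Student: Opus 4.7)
The plan is to deduce both parts of the proposition from the blockwise equivalences (\ref{eq:kappa_spectral}), with the generalized Springer decomposition of character sheaves serving as the glue. I would first define $\LL_{\frg}$ to be the direct sum $\oplus_{\kappa \in \frK_G} \LL_{\frg,\kappa}$ of the canonical block equivalences already produced in Section~\ref{sec:spectral_general}. That this assembles to an equivalence of graded transversal categories rests on the orthogonal decomposition $\Ch_{\gr,c}(\frg/G) \simeq \oplus_{\kappa \in \frK_G} \Ch_{\gr,c}(\frg/G)_\kappa$, the graded lift of Lusztig's generalized Springer correspondence from \cite{lusztigCuspidalLocalSystems1988}. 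At the level of the weight-and-$t$-heart this decomposition is classical; since each simple summand has a canonical graded lift as an IC sheaf (Example~\ref{eg:graded_sheaves}), the splitting is intrinsic and promotes to $\Ch_{\gr,c}(\frg/G)$.

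Next, I would analyze the commutative diagram by showing that both $\Ind_L^G$ and $\Res_L^G$ respect the cuspidal-block decomposition. For $\kappa \in \frK_L \subset \frK_G$, transitivity of parabolic induction $\Ind_L^G \circ \Ind_{M,\kappa}^L \simeq \Ind_{M,\kappa}^G$ forces $\Ind_L^G$ to map the $\kappa$-block of $\Ch_{\gr,c}(\frl/L)$ into that of $\Ch_{\gr,c}(\frg/G)$, and $\Res_L^G$ preserves blocks by adjunction. For $\kappa \in \frK_G \setminus \frK_L$, the canonical $\Res_L^G$ annihilates $\Ch_{\gr,c}(\frg/G)_\kappa$, which one checks on the IC-generator $\Ind_{M,\kappa}^G \overline{\calL}$ using Remark~\ref{rmk:canonical} (to compute $\Res$ using any convenient parabolic with Levi $L$) together with a standard support/cuspidality argument. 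Consequently the large diagram decomposes as a direct sum over $\kappa \in \frK_L$ of blockwise diagrams.

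For a fixed $\kappa \in \frK_L$, the task reduces to producing a canonical $2$-cell
$$\LL_{\frg,\kappa} \circ \Ind \simeq \Ind_{L,\kappa}^G \circ \LL_{\frl,\kappa}$$
in $\TCat_\gr$, where on the spectral side $\Ind$ is induction along the algebra inclusion $\C[W_L^\kappa] \# \Sym(\frz_\kappa^*[-2]\langle -2\rangle) \subset \C[W_G^\kappa] \# \Sym(\frz_\kappa^*[-2]\langle -2\rangle)$. By Proposition~\ref{prop:transversal_to_additive} it suffices to construct this $2$-cell at the weight-and-$t$-heart. On both sides the heart is additively generated by the distinguished objects $\C[W_L^\kappa]\langle 0\rangle$ and $\C[W_G^\kappa]\langle 0\rangle$, which under $\LL_{-,\kappa}$ correspond to $\overline{\calL}^L := \Ind_{M,\kappa}^L \overline{\calL}$ and $\overline{\calL}^G := \Ind_{M,\kappa}^G \overline{\calL}$. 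Transitivity of parabolic induction supplies the isomorphism $\Ind_{L,\kappa}^G \overline{\calL}^L \simeq \overline{\calL}^G$, and the induced map on graded endomorphism algebras is exactly $\C[W_G^\kappa] \otimes_{\C[W_L^\kappa]} -$, matching the spectral $\Ind$. The adjoint statement for $\Res$ follows formally.

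The main obstacle will be the canonicality of the resulting $2$-cell: a priori, both the induction functors and the equivalences $\LL_{-,\kappa}$ involve auxiliary choices of parabolics $Q \supset M$ and $P \supset L$. This is handled uniformly by combining Remark~\ref{rmk:canonical} (parabolic-independence of $\Ind_L^G$ and $\Res_L^G$) with Proposition~\ref{prop:transversal_to_additive} (which promotes any $2$-cell at the heart level to a unique $2$-cell in $\TCat_\gr$). The remaining combinatorial verification on hearts is essentially Lusztig's computation of the relative Weyl-group actions in \cite{lusztigCuspidalLocalSystems1988}.
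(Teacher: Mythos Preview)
Your proposal is correct and follows the same approach as the paper: the proposition is stated immediately after the sentence ``Taking direct sum over all cuspidal data $\kappa \in \frK_G$ of \eqref{eq:kappa_spectral}, we have'', so the intended proof is exactly to assemble $\LL_{\frg}$ as $\oplus_\kappa \LL_{\frg,\kappa}$ and deduce the commutative diagram blockwise from the single-$\kappa$ square already displayed. You have simply spelled out in more detail (the block decomposition via generalized Springer, the vanishing of $\Res_L^G$ on blocks with $\kappa\notin\frK_L$, transitivity of induction, and the use of Proposition~\ref{prop:transversal_to_additive} for canonicality) what the paper leaves implicit.
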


\subsection{Localizing $\Ch(\frg/G)$ over the apartment}
\label{localizing}
In this section, we describe categories of character sheaves on Lie algebras as global sections of certain combinatorial defined sheaves on the affine space $X_*(T) \otimes \CC$.

\subsubsection{Sheaves of categories on affine spaces}
\label{sec:categories_on_affine_spaces}
We collect some facts about sheaves of categories in Appendix~\ref{secA:sheaves_of_categories}.
Let $\mathbb{E}$ be a finite-dimensional complex affine space,  $\frF$ a locally finite set of affine subspaces of $\mathbb{E}$. 
For any $\epsilon \in \frF$, we put $\scrQCoh_\epsilon$ the constant sheaf of category on $\epsilon$ with value $\QCoh(\frz_\epsilon[1])$, for $\frz_\epsilon$ the tangent space of $\epsilon$. Let $f: \frG \to \frF$ a map between sets,
denote $\scrQCoh_{\frG}=\prod_{c \in \frG} \scrQCoh_{f(c)}.$ Let $K$ be a discrete group of affine linear transformations acting properly discontinuously on $\mathbb{E}$, 
and $\frF$ is stable under $W$-action. Assume also that $K$ acts on $\frG$, such that $f$ is $K$-equivariant, then 
$\scrQCoh_\frG$ is naturally a $K$-equivariant sheaf on $\EE$. Hence for any $K$-invariant open subset $U$, we have natural $K$-action on $\Gamma(U,\scrQCoh_\frG)$. Denote the invariant category by $\Gamma(U,\scrQCoh_\frG)^K$. We define $\scrCoh_\frG , \scrPerf_\frG, \scrCoh^\gr_\frG$ similarly, by replacing $\QCoh(\frz_\epsilon[1])$ in the above definition by $\Coh(\frz_\epsilon[1]), \Perf(\frz_\epsilon[1]),\Coh(\frz_\epsilon[1]\langle  2 \rangle )$ respectively.

\begin{rmk}
	\label{rmk:invariant}
	We can write $\Gamma(\EE,\scrQCoh_\frG)^W$ more concretely as follows: 
	pick $[\frG//K]$ a set of representatives of $\frG//K$, 
	and denote $K_c$ the stabilizer of $K$ at $c \in \frC$. 
	Then we have equivalence of categories:  $$\Gamma(\EE,\scrQCoh_\frG)^W 
	\simeq  (\prod_{c \in \frG} \Sym(\frz_{f(c)}[1]) \textup{-mod} )^W
	\simeq \prod_{c \in [\frG//K]} \CC[K_c]\#\Sym(\frz_{f(c)}[1]) \textup{-mod}.$$
\end{rmk}

Let  $K' \subset K$, and $\frG' \subset \frG$ a $K'$-stable subset. We have natural pair of adjoint functors:
$$      \Ind_{\frG'/K'}^{\frG/K} : \Gamma(\EE, \scrQCoh_{\frG'})^{K'} \rightleftarrows           \Gamma(\EE, \scrQCoh_{\frG})^K  :\Res^{\frG/K}_{\frG'/K'}         $$ 
When the context is clear, we shall simply denote them by $\Ind, \Res$.

\begin{no} 
	\label{no:lie}
	We collect some Lie-theoretic notations:
	\begin{enumerate}
	\item $\Phi$ the set of roots of $(G,T)$.
	\item  Each $\alpha \in \Phi$ defines a linear function on $\frt$. Let $H_\alpha= \{x \in \frt: \alpha(x)=0\}$, and $s_\alpha$ the reflection about $H_\alpha$ defined by $s_{\alpha}(x)=x-\alpha(x) \alpha^{\vee}$.
	\item 	For  $\alpha \in \Phi$, put $\frg_\alpha \subset \frg$ the root space.
	\item  For $I$ a facet of $\frt$ (of the hyperplane arrangement given by $H_\alpha, \alpha \in \Phi$), put $\epsilon_I =  \cap_{\{\alpha \in \Phi: \alpha(I)=0\}} H_\alpha$ the linear subspace of $\frt$.  
	\item Define the set $\frS = \{ \epsilon_I :  I  \textup{ a facet as in (4)} \}$ of linear subspaces of $\frt$.
	\item For $\epsilon \in \frS$, put $\Phi_\epsilon:=\{ \alpha \in \Phi: \alpha(\epsilon) =0 \}$,  $W_\epsilon= \langle s_\alpha: \alpha  \in \Phi_{\epsilon} \rangle \subset W$, and $W^{\epsilon}= N_W(W_\epsilon)/W_\epsilon$. 
	\item Put $G_\epsilon $ the connected reductive subgroup of $G$ containing $T$ with roots $\Phi_\epsilon$,  so that it has Weyl group $W_\epsilon$.
	\item  Put $C_G$ the set of isomorphism classes of cuspidal sheaves on $\calN_G/G$, where $\calN_{G}\subset \frg$ is the nilpotent cone of $G$. 
	\item 	 $\frC=\{(\epsilon,B,F): \epsilon \in \frS, F \in C_{G_\epsilon}, B \supset T$  a Borel subgroup of $G_\epsilon $\}.
	\item We also use the notation $W_G,\frS_G, \frC_G,$ etc, to emphasis the dependence on $G$.
	\item For $I$ a facet, 
	put $G_I=G_{\epsilon_I}, W_I=W_{\epsilon_I},  W^{I}=W^{\epsilon_I}, \frz_I= \frz_{\epsilon_I}=\frz_{\frg_I}$ the center of $\frl_I$, $C_I=C_{{G_{I}}}$, $\tilfrC_I=\tilfrC_{G_I}$, etc.
\end{enumerate}
\end{no}

Now in the notation of Section~\ref{sec:categories_on_affine_spaces}, take $\EE=\frt$, and $f:\frC_G \to \frS_G$, via $(\epsilon,B,F) \mapsto  \epsilon$. Note that $f$ is naturally $W$-equivariant, therefore $\scrQCoh_{{\frC}_G}, \Gamma(\frt, \scrQCoh_{{\frC}_G})^W$ etc, are defined.

We have the following reformulation of Proposition~\ref{prop:lie_algebra_spectral}:

\begin{prop}
	\label{prop:identify_invariant_category}
	There is a canonical equivalence:
	\beq
	\label{eq:spectral_gr}
	\LL_{\frg}:  \Gamma(\frt, \scrCoh^{\gr}_{\frC_G})^{W_G} \simeq \Ch_{\gr,c}(\frg/G) .
	\eeq 

Moreover, let $L \subset G$ be a Levi subgroup containing $T$, the diagram naturally commutes:
	\beq  
	\label{spectral_gr_commute}
	\xymatrix{ \Gamma(\frt, \scrCoh^\gr_{\frC_L})^{W_L} \ar@<-.5ex>[d]_{\Ind} \ar[r]^-{\LL_{\frl}}  & \Ch_{\gr,c}(\frl/L) \ar@<-0.5ex>[d]_{\Ind_{L}^G} \\
	\Gamma(\frt, \scrCoh^\gr_{\frC_G})^{W_G} \ar[r]^-{\LL_{\frg}} \ar@<-.5ex>[u]_{\Res} &  \Ch_{\gr,c}(\frg/G) \ar@<-.5ex>[u]_{\Res^G_L}
	}
	\eeq

\end{prop}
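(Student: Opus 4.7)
The plan is to deduce Proposition~\ref{prop:identify_invariant_category} from Proposition~\ref{prop:lie_algebra_spectral} by translating the sheaf-of-categories description of the left-hand side into the explicit cuspidal-data decomposition appearing there.

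\textbf{Step 1 (unwinding invariants).} Apply the $\scrCoh^\gr$-analogue of Remark~\ref{rmk:invariant} to get
\[
\Gamma(\frt,\scrCoh^\gr_{\frC_G})^{W_G} \;\simeq\; \prod_{c \in [\frC_G//W_G]} \CC[W_{G,c}] \# \Sym(\frz_{f(c)}^*[-2]\langle -2\rangle)\perf^\gr,
\]
where $W_{G,c} = \Stab_{W_G}(c)$ and where the identification of $\Coh(\frz_\epsilon[1]\langle 2\rangle)$-modules with graded perfect modules over $\Sym(\frz_\epsilon^*[-2]\langle -2\rangle)$ is the Koszul duality of the example preceding Proposition~\ref{prop:transversal_to_additive}.

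\textbf{Step 2 (indexing by cuspidal data).} I construct a canonical bijection $\frC_G//W_G \leftrightarrow \frK_G$ as follows. Given $c = (\epsilon,B,F) \in \frC_G$, the subgroup $M := G_\epsilon$ is a Levi of $G$ with $\frz_\frakm = \frz_\epsilon$, and the cuspidal sheaf $F$ on $\calN_{M}/M$ supplies a nilpotent orbit $O$ and a cuspidal local system $\calL$, yielding $\kappa(c) = (M,O,\calL) \in \frK_G$. Using that $\Stab_{W_G}(\epsilon) = N_{W_G}(W_\epsilon)$ with $W_\epsilon$ acting simply transitively on Borels of $G_\epsilon$ containing $T$, the choice of $B$ kills the $W_\epsilon$-factor and leaves $\Stab_{W_G}(\epsilon,B) = W^\epsilon \simeq N_G(M)/M = W_G^\kappa$; the stabilizer $W_{G,c}$ inside $W_G^\kappa$ is the subgroup fixing $F$, which matches the combinatorics of Lusztig's generalized Springer theory.

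\textbf{Step 3 (combining with the spectral description).} Under the bijection of Step~2 the product of Step~1 becomes
\[
\bigoplus_{\kappa \in \frK_G} \CC[W_G^\kappa] \# \Sym(\frz_\kappa^*[-2]\langle -2\rangle)\perf^\gr,
\]
and Proposition~\ref{prop:lie_algebra_spectral} identifies this canonically with $\Ch_{\gr,c}(\frg/G)$. This defines the equivalence $\LL_\frg$ of~\eqref{eq:spectral_gr}.

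\textbf{Step 4 (naturality under Levi inclusion).} The inclusion $L \subset G$ induces a $W_L$-equivariant inclusion $\frC_L \hookrightarrow \frC_G$ (since a Levi of $L$ containing $T$ is in particular a Levi of $G$), compatible with $\frK_L \subset \frK_G$ and with the inclusions $W_L^\kappa \subset W_G^\kappa$ for each common cuspidal datum. Unwinding the formulas in Remark~\ref{rmk:invariant}, the abstract adjoint pair $(\Ind,\Res)$ of Section~\ref{sec:categories_on_affine_spaces} becomes, block by block in $\kappa$, the standard induction/restriction along $W_L^\kappa \subset W_G^\kappa$ of modules over the common graded polynomial ring. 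This is precisely the adjoint pair appearing on the left of the square in Proposition~\ref{prop:lie_algebra_spectral}, so the commutativity of~\eqref{spectral_gr_commute} follows from that square.

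\textbf{Expected main obstacle.} The most delicate point is Step~2: checking that the Borel $B$ in the triple $(\epsilon,B,F)$ interacts correctly with the $W_G$-action so that the residual stabilizer is exactly $W_G^\kappa$ (after accounting for $W_G^\kappa$'s possibly nontrivial action on the cuspidal local system $\calL$), and that the whole combinatorial correspondence commutes with the Levi inclusion used in Step~4. Once this is set up with the correct Koszul-duality-compatible grading conventions, the rest is a direct translation from Proposition~\ref{prop:lie_algebra_spectral}.
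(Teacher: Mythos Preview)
Your proposal is correct and follows essentially the same strategy as the paper: unwind the invariants via Remark~\ref{rmk:invariant}, establish a bijection $\frC_G//W_G \simeq \frK_G$ identifying stabilizers with relative Weyl groups, and then invoke Proposition~\ref{prop:lie_algebra_spectral} together with Koszul duality, checking compatibility with the Levi inclusion. The only difference is bookkeeping in Step~2: the paper factors the bijection through an intermediate set $\frD_G$ of pairs (facet of a fixed chamber, cuspidal sheaf) and cites earlier work for $\frK_G\simeq\frD_G$, whereas you build the map $(\epsilon,B,F)\mapsto(G_\epsilon,O,\calL)$ directly; in either route the key input you flag as the ``main obstacle'' is that the full relative Weyl group $W_G^\kappa$ fixes the cuspidal datum $F$ (so that $W_{G,c}=W_G^\kappa$, not a proper subgroup), which the paper records as the isomorphism $W_c\hookrightarrow N_{W_G}(W_\epsilon)\twoheadrightarrow W^\epsilon$.
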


\begin{proof}
We claim there is bijection between $\frK_G$ and $\frC_G//W_G$. Indeed, fix a Weyl chamber $A$ corresponds to a Borel subgroup $B$ of $G$, denote $\frD_G:=\{(J,F): J$\textup{ a facet of }$A$, $F \in C_{J}$\}. Then by \cite[Proof of Theorem 1.2]{liDerivedCategoriesCharactera}, we have an bijection $\frK_G \simeq \frD_G$. Now we claim the natural map 
	$h:\frD_G \to \frC_G//W_G$, $(J,F)\mapsto(\epsilon_J,B \cap G_J,F)$ is bijective. Surjectivity: by definition $(\epsilon_J,B \cap G_J,F)$ is in the image of $h$, the group $W_J$ action fix $\epsilon_J$ and $F$, but acts transitively on all Borel subgroup $R$ of $G_J$, therefore all elements of the form $(\epsilon_J, R, F)$ are in the image of $h$, now $h$ is surjective by $W_G$-action. Injectivity: now suppose $h(J,F)=h(J',F')$, i.e  there is $w \in W_G$, such that $w(\epsilon_J, B \cap G_J,F)= (\epsilon_{J'}, B \cap G_{J'},F')$ in $\frC_G$. Denote by $P_J$ the parabolic subgroup of $G$ with Levi factor $G_J$. Then $Ad_w(P_J)$ and $P_{J'}$ are both parabolic subgroup of $G$ with Levi $G_{J'}$, which supports a cuspidal sheaf, therefore $Ad_w(P_J)$ and $P_{J'}$ are conjugate in $G$ by \cite[Lemma 3.1]{liDerivedCategoriesCharactera}. Hence $P_J=P_J'$ and $J = J'$. Now we must have $w \in N_{W_G}(W_J)$, therefore $F'=w(F)=F$, and the injectivity holds.
	Finally, for $c=(\epsilon,R,F) \in \frC_G$, the composition $W_c \hookrightarrow N_{W_G}(W_\epsilon) \to N_{W_G}(W_\epsilon)/W_\epsilon= W^{\epsilon}$ is an isomorphism. Therefore \eqref{eq:spectral_gr} holds by Remark~\ref{rmk:invariant} and Proposition~\ref{prop:lie_algebra_spectral} (together with Koszul duality). It is easy to check the bijection $\frK_G \simeq \frC_G//W_G$ is compatible with the natural map from $L$ to $G$ on both sides, this implies \eqref{spectral_gr_commute}.
\end{proof}

\section{Spectral description of character sheaves on reductive groups}
\label{sec:gluing}

In this chapter, we calculate $\Ch(G/G)$ by gluing (i.e taking limit) the identifications on Lie algebras as in Proposition~\ref{prop:identify_invariant_category}. The contents are organized as follows:
\begin{itemize}
	\item  Section~\ref{sec:diagram}: define the diagram over which we take limits.
	\item  Section~\ref{sec:auto_glue}: define the gluing functor on the automorphic side, and identify its limits as $\Ch(G/G)$.
	\item Section~\ref{sec:spec_glue}: define the gluing functor on the spectral side, and compute explicitly its limits.
	\item Section~\ref{sec:match}: match up the functors on both sides and deduce our main theorem.
\end{itemize}

\subsection{Organizing diagram}
\label{sec:diagram} Let $LG=G(z,z^{-1})$ and $L\frg=\frg(z,z^{-1})$ be the loop group and loop Lie algebra.
We first introduce analogous notation as in Notation~\ref{no:lie} for the affine case:

\begin{no} 
	\label{no:lie_affine}
	\begin{enumerate}
		\item $\widetilde{\Phi}= \ZZ \times {\Phi} $ the set of affine roots. $\widetilde{W} = X_*(T) \rtimes W$ the extended affine Weyl group.
		\item  Each $\alpha =(n,\ov{\alpha}) \in \widetilde{\Phi}$ defines an affine linear function $\alpha:=\ov{\alpha}+n$. Let $H_\alpha= \{x \in \frt: \alpha(x)=0\}$, and $s_\alpha$ the reflection about $H_\alpha$ defined by $s_{\alpha}(x)=x-\alpha(x)\ov\alpha^{\vee}$.
		\item 	For  $\alpha =(n,\ov{\alpha}) \in \widetilde{\Phi}$, put $\frg_\alpha= \frg_{\ov\alpha} z^n \subset L\frg$ the root space.
		\item  For $I$ an affine facet of $\frt$ (i.e a facet of the hyperplane arrangement given by affine hyperplanes $H_\alpha, \alpha \in \widetilde{\Phi}$), put $\epsilon_I =  \cap_{\{\alpha \in \Phi: \alpha(I)=0\}} H_\alpha$ the linear subspace of $\frt$.  
		\item Define the set $\tilfrS = \{ \epsilon_I :  I  \textup{ a facet as in (4)} \}$ of linear subspaces of $\frt$.
		\item For $\epsilon \in \frS$, put $\Phi_\epsilon:=\{ \alpha \in \Phi: \alpha(\epsilon) =0 \}$,  $W_\epsilon= \langle s_\alpha: \alpha  \in \Phi_{\epsilon} \rangle \subset W$, and $\tilW^{\epsilon}= N_{\tilW}(W_\epsilon)/W_\epsilon$. 
		\item Put $G_\epsilon $ the connected reductive subgroup of $LG$ containing $T$ with roots $\Phi_\epsilon$,  so that it has Weyl group $W_\epsilon$.
		\item 	 $\tilfrC=\{(\epsilon,B,F): \epsilon \in \tilfrS, F \in C_{G_\epsilon}, B \supset T$  a Borel subgroup of $G_\epsilon $\}. Note that there is natural bijection between $\tilfrC$ with $\tilfrC_G$ in the introduction. We shall by abuse of notations, denote them by the same symbol.
		\item We also use the notation $\tilW_G,\tilfrS_G, \tilfrC_G,$ etc, to emphasis the dependence on $G$.
		\item For $I$ an affine facet, 
		put $G_I=G_{\epsilon_I}, W_I=W_{\epsilon_I},  W^{I}=W^{\epsilon_I}, \frz_I= \frz_{\epsilon_I}=\frz_{\frg_I}$ the center of $\frl_I$, $C_I=C_{{G_{I}}}$, etc.
	\end{enumerate}
\end{no}

\begin{defn}

\begin{enumerate}
	\item Denote $\frF_G$ the $(2,1)$-category with 
	\begin{itemize}
		\item Objects: affine facets $I$ in $\frt_\RR$;
		\item Morphisms: $w: I \to J$, for $ w \in \tilW$, and $I \subset \overline{w(J)}$;
		\item 2-morphisms: $u : w \rightarrow wu : I \to J$, for $u \in W_I$
	\end{itemize}	
	\item Let $L \subset G$ be a Levi subgroup, that contains $T$. For $I \in \frF_G$, denote by $I_L \in \frF_L$ the facet containing $I$. Let $\frF_{L \subset G}$ be the category with 
	\begin{itemize}
		\item Objects: $I \in \frF_G$;
		\item Morphism: $w: I  \to J$, for all $w \in \tilW_L$,  and $I \subset \overline{w(J)}$.
		\item 2-morphism: $u: w \Rightarrow wu : I \to J$, for all $u \in W_{I_L}.$
		\end{itemize}	
\end{enumerate}

\end{defn}

We have natural functors
\begin{equation}
	\xymatrix@R=1pt{    \frF_G     &    \frF_{L \subset G} \ar[r]^{\alpha} \ar[l]_{\beta}  &    \frF_L         \\
				(I,w,u)      &      (I,w,u)   \ar@{|->}[l] \ar@{|->}[r]           &       (I_L,w,u)        }
\end{equation}

\begin{rmk}
	\label{rmk:F_G}
\begin{enumerate}
	\item For any $w_1, w_2$ two 1-morphisms in $\frF_G$ or $\frF_{L \subset G}$, the set of two morphisms between $w_1$ and $w_2$ is either $\emptyset$ or a singleton. Therefore both $\frF_G$ and $\frF_{L \subset G}$ are equivalent to 1-categories.
	\item When $G$ is simple and simply-connected with a fixed alcove $A$, denote $\frF_A$ the poset of (non-empty) facets of $A$, with $I \to J$ if $I \subset \overline{J}$. Then the natural map $\frF_A \to \frF_G$ is an equivalence.
\end{enumerate}
\end{rmk}

\begin{lem}
	\label{lem:fiber_alpha_contractible}
	The fibers of $\alpha$ have contractible geometric realizations.
\end{lem}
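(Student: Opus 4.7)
The plan is to identify the strict fibre $\alpha^{-1}(J_L)$ over an object $J_L \in \frF_L$ with the face poset of a polyhedral subdivision of $J_L$, and then deduce contractibility from convexity of $J_L$.

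First I would unpack the fibre. An object of $\alpha^{-1}(J_L)$ is a $G$-facet $I$ with $I_L = J_L$, i.e.\ a $G$-facet contained in $J_L$. A $1$-morphism $w \colon I \to I'$ of $\frF_{L \subset G}$ is sent by $\alpha$ to the $1$-morphism $w \colon J_L \to J_L$ of $\frF_L$, which equals the identity exactly when $w = e \in \tilW_L$; so fibre $1$-morphisms correspond to $I \subset \overline{I'}$. Similarly, a $2$-morphism labelled by $u \in W_{I_L} = W_{J_L}$ maps to the $2$-morphism labelled by the same $u$ in $\frF_L$, and is an identity only when $u = e$. Therefore $\alpha^{-1}(J_L)$ is (equivalent to) the poset $\calP_{J_L}$ of $G$-facets contained in $J_L$, ordered by $I \le I'$ iff $I \subset \overline{I'}$.

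Next I would make the geometric identification. The affine $G$-hyperplanes that meet $J_L$ non-trivially cut $J_L$ into a locally finite regular polyhedral complex whose open cells are exactly the $G$-facets lying in $J_L$; the poset $\calP_{J_L}$ is the face poset of this complex. Appealing to the standard fact that the order complex of the face poset of a regular CW-complex is its barycentric subdivision, one obtains a homeomorphism $|\calP_{J_L}| \cong J_L$.

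Finally, $J_L$ is a facet of an affine hyperplane arrangement, hence a relatively open convex subset of an affine subspace of $\frt_\RR$, and in particular contractible; this yields contractibility of $|\alpha^{-1}(J_L)|$. The only conceptual step is the first one, unpacking the strict fibre (and confirming this is the intended notion of fibre, which is reasonable in view of Remark~\ref{rmk:F_G}(1)); the geometric input reduces to convexity of an affine facet and is essentially immediate.
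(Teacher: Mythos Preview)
Your proposal is correct and follows essentially the same approach as the paper's own proof, which is the one-line assertion $|\alpha^{-1}(J)| = |\{I \in \frF_G : I \subset J\}| \simeq J$. You have simply unpacked in detail what the paper leaves implicit: that the strict fibre is the face poset of the induced subdivision of $J$, and that its nerve is the barycentric subdivision of (hence homeomorphic to) the convex cell $J$.
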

\begin{proof}
	$|\alpha^{-1}(J)| = | \{I \in \frF_G : I \subset   J\} | \simeq J$ is contractible.
\end{proof}

\subsection{Automorphic side}
\label{sec:auto_glue}
We defined the gluing functor on the automorphic side: the ungraded one in Section~\ref{auto_ungraded}, and the graded one in Section~\ref{auto_graded}.

\subsubsection{The functor $\Ch^{open}$}
\label{auto_ungraded}

The result in this section is a variation of \cite[Theorem 5.40]{liUniformizationSemistableBundles2021}, which identity $\Ch(G/G)$ as a limit of character sheaves on various (open subsets of) Lie algebras.
We refer to \cite[Section 5]{liUniformizationSemistableBundles2021} for definitions and notations.
For $I \in \frF_G$, denote by $G_I \subset LG$ the corresponding Levi subgroup. 
There is a twisted conjugation action of $G_I$ on its Lie algebra $\frg_I$ (by gauge action), we denote the quotient stack by $\frg_I/'G_I$. The statement in previous sections for the usual adjoint quotient stack $\frg/G$ still holds for the twisted quotient.
We have the twisted characteristic polynomial map $\chi'_I: \frg_I \to \frt//W_I$, put $St_I \subset \frt_\RR$ the star of $I$, $V_I= St_I \times i\frt_\RR \subset \frt$, and $U_{G,I}=\chi'{}^{-1}(V_I//W_I) \subset \frg_I$. 
Fix $\dot{w}$ a lift $w \in W$ into $N_G(T)$, this gives a lift of set $\widetilde{W} \to LG$ .   We have a functor $\sfU_G: \frF_G \to \Stack$ \footnote{As in \cite{liUniformizationSemistableBundles2021}, 
	one can choose the lift uniformly by picking a surjective group homomorphism $\dot{W} \to W$, and introduce 2-morphisms to cancel the extra 1-morphisms. 
	However, for simplicity, we shall not proceed in this direction.}, via 
\begin{itemize}
	\item On objects:   $I \mapsto U_{G,I}/'G_I$;
	\item On 1-morphisms:  $\{w: I \to w(I) \} \mapsto Ad'_{\dot{w}}  : U_{G,I}/'G_I \to U_{G,w(I)}/'G_{w(I)}; $
	\item On 2-morphisms:  $\{u: w_1 \to w_2  \}   \mapsto $ the inner automorphism by conjugation of ${\dot{w}_2 \dot{w}_1^{-1}}$.
\end{itemize}
The pullback functor $Ad'_{\dot{w}}{}^*$ preserves the category of character sheaves, and induces a functor: 
$$\mathsf{Ch}^{\open}_G :  \frF_G \to \Cat, \qquad I \mapsto \Ch( U_{G,I}/'G_I).$$
For $I \in \frF_G$, put $L_I= L_{I_L} $, $U_{L,I}= U_{L,I_L} $, then we have $ U_{G,I} \cap L_I \subset U_{L,I}$. Define similarly the functor:
	 $$\mathsf{Ch}^{\open}_{L \subset G} :  \frF_{L \subset G} \to \Cat, \qquad I \mapsto \Ch( U_{G,I} \cap \frl_I /'L_I).$$

Let $P \subset G$ be a parabolic subgroup with Levi subgroup $L$. For $I \in \frF_G$, put $P_I=G_I \cap P$.  We have the correspondence:   $ U_{G,I} \cap \frl_I /'L_I  \xleftarrow{q}    U_{G,I} \cap \frp_I /'P_I  \xrightarrow{p} U_{G,I}/'G_I   $.   Put $\Res_{P_I}=p_* q^! : \Ch( U_{G,I}/'G_I) \to \Ch( U_{G,I} \cap \frl_I /'L_I).$ Define natural transformations:
\begin{itemize}
	\item $\sfRes_P : \mathsf{Ch}^{\open}_G \circ \beta  \Rightarrow \mathsf{Ch}^{\open}_{L \subset G} $,  via  $ I \mapsto \Res_{P_I}:  \Ch(U_{G,I}/'G_I) \to \Ch(U_{G,I} \cap \frl_I/'L_I)$;
	\item $\sfRef: \mathsf{Ch}^{\open}_L \circ \alpha \Rightarrow \mathsf{Ch}^{\open}_{L \subset G} $, via $I \mapsto j^*_I: \Ch(U_{L,I}/'L_I) \to \Ch(U_{G,I} \cap \frl_I/'L_I),$ where $j_I: U_{G,I} \cap \frl_I/'L_I \to U_{L,I}/'L_I$ denote the open embedding.
 \end{itemize}
%$\sfRef$ is an natural equivalence, since each $j^*_I$ is an equivalence.
We have induced functors:
\begin{equation}
\xymatrix{ 	\lim_{\frF_G} \mathsf{Ch}^{\open}_G  \ar[r]^-{\beta^*}  &  \lim_{\frF_{L \subset G}} \mathsf{Ch}^{\open}_G \circ \beta \ar[r]^{\lim \sfRes_P}  &  \lim_{\frF_{L \subset G}} \mathsf{Ch}^{\open}_{L \subset G}   }
\end{equation}
\begin{equation}
	\xymatrix{ 	\lim_{\frF_L} \mathsf{Ch}^{\open}_L  \ar[r]^-{\alpha^*}  &  \lim_{\frF_{L \subset G}} \mathsf{Ch}^{\open}_L \circ \alpha \ar[r]^{\lim \sfRef}  &  \lim_{\frF_{L \subset G}} \mathsf{Ch}^{\open}_{L \subset G}   }
\end{equation}

\begin{thm} 
 In the above setting,
\begin{enumerate}
	\item There is a natural equivalence: 
	$$  \lim_{ I \in \frF_G } \sfCh^{\open}_G \simeq  \Ch(G/G)$$ 
	\item The functor  $$\alpha^*\circ \lim \sfRef : \lim_{ I \in \frF_{L \subset G} } \sfCh^{\open}_{L \subset G} \longrightarrow \lim_{ I \in \frF_{L} } \sfCh^{\open}_L$$
	is an equivalence.
\end{enumerate}	
Moreover, under the above equivalences, the diagram naturally commutes:
	\begin{equation}
		\label{eq:Ch_G_to_Ch_L}
		\xymatrix{   \lim_{ I \in \frF_G } \sfCh^{\open}_G  \ar[r]^-{\simeq} \ar[d]^{ \lim \sfRes_P \circ \beta^*} &      \Ch(G/G) \ar[d]^{\Res_P} \\
			\lim_{ I \in \frF_{L \subset G} }  \sfCh^{\open}_{L \subset G}   \ar[r]^-{\simeq} &  \Ch(L/L)	   
		}
	\end{equation}
\end{thm}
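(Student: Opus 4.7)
The plan is to build on the uniformization theorem of \cite{liUniformizationSemistableBundles2021} and adapt it to this $2$-categorical setting, using a descent argument driven by Lemma~\ref{lem:fiber_alpha_contractible}.

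For part (1), this is a direct translation of \cite[Theorem 5.40]{liUniformizationSemistableBundles2021}. I would recall that the open substacks $U_{G,I}/'G_I$, organized by the twisted conjugation action of $LG$, cover $G/G$ via the Newton/semistability stratification (each point of $G/G$ lies in the image of some $U_{G,I}/'G_I$ after twisting by a suitable element of $\widetilde{W}$). Since $\Ch$ satisfies \v{C}ech descent for open covers of smooth stacks, and the \v{C}ech nerve of this cover can be identified with $\frF_G$ (objects = facets, morphisms = containment of closures up to $\widetilde{W}$, $2$-morphisms tracking the $W_I$-stabilizer of the chart), the desired identification $\lim_{\frF_G}\sfCh^{\open}_G \simeq \Ch(G/G)$ follows.

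For part (2), the key input is Lemma~\ref{lem:fiber_alpha_contractible}. I would compute $\lim_{\frF_{L\subset G}}\sfCh^{\open}_{L\subset G}$ by iterated limits: first along the fibers of $\alpha$, then over $\frF_L$. For a fixed $J\in\frF_L$, the functor $\sfCh^{\open}_L\circ \alpha$ is constant on $\alpha^{-1}(J)$ with value $\Ch(U_{L,J}/'L_J)$, and the $\sfRef$-transition maps are pullbacks $j_I^*$ along the open embeddings $U_{G,I}\cap\frl_J \hookrightarrow U_{L,J}$. The geometric claim to verify is that $\{U_{G,I}\cap\frl_J\}_{I\in\alpha^{-1}(J)}$ forms an open cover of $U_{L,J}$ with intersections of the form $U_{G,I\cap I'}\cap\frl_J$, so that its \v{C}ech nerve is the poset $\alpha^{-1}(J)$. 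Since $\alpha^{-1}(J)\simeq J$ is contractible by Lemma~\ref{lem:fiber_alpha_contractible}, and $\Ch$ descends along the cover, the fiber limit collapses to $\Ch(U_{L,J}/'L_J)=\sfCh^{\open}_L(J)$. Passing to the outer limit over $\frF_L$ then identifies both limits.

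For the commutative square (\ref{eq:Ch_G_to_Ch_L}), I would argue locally: on each chart $U_{G,I}$, the parabolic restriction $\Res_P$ restricts to $\Res_{P_I}$ via proper base change applied to the correspondence $\frl_I/'L_I \leftarrow \frp_I/'P_I \rightarrow \frg_I/'G_I$ intersected with $U_{G,I}$. The uniformization equivalence in (1) is built from the local charts, so it transports $\Res_P$ to $\lim\sfRes_P\circ\beta^*$ tautologically. The statement for $\Ind_P$ then follows by passing to the (respective) left adjoints of both vertical arrows.

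The main obstacle I anticipate is verifying the \v{C}ech descent at each fiber of $\alpha$, specifically checking that the intersection pattern of the opens $U_{G,I}\cap\frl_J$ really is governed by the poset $\alpha^{-1}(J)$ and that $\Ch$ (a full subcategory of all conjugate-equivariant sheaves cut out by a singular-support condition) satisfies descent for this analytic cover. Once this is in hand, the rest of the argument is a formal manipulation of limits combined with base change.
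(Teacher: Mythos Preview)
Your approach for (1) and for the commutative square matches the paper's: the paper invokes \cite[Theorem 5.40]{liUniformizationSemistableBundles2021} via the observation that each $U_{G,I}/'G_I \to G/G$ is \'etale and that Lemma~\ref{prop:equiv_C_points} gives the $\infty$-groupoid equivalence $\colim_{\frF_G} U_{G,I}/'G_I(\CC) \simeq G/G(\CC)$; the square follows from the local base-change diagram you described.

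For (2), however, your route diverges from the paper's. The paper does \emph{not} verify \v{C}ech descent over the fibers of $\alpha$. Instead it shows that each individual restriction
\[
j_I^*: \Ch(U_{L,I}/'L_I) \longrightarrow \Ch(U_{G,I}\cap \frl_I /'L_I)
\]
is already an equivalence: both $U_{G,I}\cap \frl_I$ and $U_{L,I}$ are star-shaped open subsets of $\frl_I$ centred at any point of $I$, and \cite[Prop.~4.8, Def.~4.5]{liUniformizationSemistableBundles2021} then gives the equivalence by contraction. Thus $\sfRef$ is an objectwise equivalence, hence so is $\lim\sfRef$; the contractibility of $\alpha^{-1}(J)$ is used only to show that $\alpha^*$ is an equivalence on the \emph{constant} diagram $\sfCh^{\open}_L\circ\alpha$. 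Your descent approach would work in principle, but the cover statement you flag as the main obstacle (that $\bigcup_{I\in\alpha^{-1}(J)} U_{G,I}\cap\frl_J = U_{L,J}$ with the correct intersection pattern) is not addressed in the paper and is bypassed entirely by the star-shaped contraction argument. Note also that in your formulation the contractibility of $\alpha^{-1}(J)$ plays no role in the descent step itself --- descent already computes the limit regardless of the shape of the indexing nerve --- so the lemma is really only needed for the $\alpha^*$ step, as in the paper.
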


\begin{proof}
	(1) is a variation of \cite[Theorem 5.40]{liUniformizationSemistableBundles2021}, and can be proved similarly: each map $U_{G,I} \to G/G$ is \'etale (in the classical topology), 
	then the statement follows from Prop~\ref{prop:equiv_C_points} below. (2) The second equivalence is proved in (1). 
	For the first one: $\alpha^*$ is equivalence by Lemma~\ref{lem:fiber_alpha_contractible}.
	 Moreover, both $U_{G,I} \cap \frl_I$ and $U_{L_I}$ are star-sharped open subset of $\frl_I$ centered at some/any point in $I$. 
	 Therefore by \cite[Prop 4.8, Definition 4.5]{liUniformizationSemistableBundles2021}, $j_I^*$ is an equivalence, therefore $\lim \sfRef$ is also an equivalence. 
	Finally, the square commutes because over the local chart $U_{G,I} \cap \frl_I/'L_I$ of $ L/L$ and $U_{G,I}$ of $G/G$, the stack $P/P$ can identified as $U_{G,I} \cap \frp_I/P_I$, more precisely, the diagram commute:
	$$\xymatrix{  U_{G,I} \cap \frl_I/'L_I \ar[d]  &   U_{G,I} \cap \frp_I/'P_I \ar[r]^p  \ar[d] \ar[l]_q &  U_{G,I}/'G_I  \ar[d] \\
						L/L						&       P/P					 \ar[r]^p  \ar[l]_q			&    G/G
}$$
with both squares cartesian.
\end{proof}

\begin{lem}
	\label{prop:equiv_C_points}
The natural map $\colim_{\frF_G} U_{G,I}/'G_I(\mathbb{C})  \to G/G (\mathbb{C})$ is an equivalence of $\infty$-groupoid.
\end{lem}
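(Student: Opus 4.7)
The plan is to describe both sides as $1$-groupoids and verify essential surjectivity and full faithfulness of the comparison map separately. The right-hand side $G/G(\CC)$ is the action groupoid $G(\CC)/\!/G(\CC)$ of $G(\CC)$ acting on itself by conjugation. The left-hand side, being a homotopy colimit of $1$-groupoids indexed by the $(2,1)$-category $\frF_G$ (which is $1$-categorical by Remark~\ref{rmk:F_G}(1)), is described via the Grothendieck construction: objects are pairs $(I,x)$ with $I$ a facet and $x \in U_{G,I}(\CC)$, and morphisms $(I,x) \to (J,y)$ are pairs $(w,h)$ with $w \in \tilW_G$ satisfying $I \subset \overline{w(J)}$, together with $h \in G_{w(J)}(\CC)$ that gauge-transforms $\Ad'_{\dot w}(x)$ to $y$, taken modulo the $W_I$ $2$-cells.

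For essential surjectivity, I would use Jordan decomposition combined with the loop-group uniformization argument of \cite{liUniformizationSemistableBundles2021}: any $g \in G(\CC)$ can be conjugated so that its semisimple part lies in $T$, and the logarithm of this semisimple part selects an affine facet $I$ of $\frt_\RR$; the definition of the chart $U_{G,I}$ via the star $\textup{St}_I$ of $I$ is precisely engineered so that both the semisimple and unipotent parts of $g$ are captured inside the twisted-conjugation orbit of an element of $U_{G,I}(\CC)$ under the finite-type Levi $G_I$.

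For full faithfulness, given two representatives $(I,x)$ and $(J,y)$ whose images are conjugate by some $h \in G(\CC)$, I would use the affine Bruhat decomposition on the loop group $LG(\CC)$ to factor $h$ as a composition of gauge elements in $G_I(\CC)$ and $G_J(\CC)$ together with a lift $\dot w$ of some element $w \in \tilW_G$ normalising $T$. This realises $h$ as a $1$-morphism in the colimit diagram, and the $W_I$ $2$-cells absorb the ambiguity in such a factorization (coming from the fact that $\dot w$ is only defined up to the Weyl group stabilising $I$).

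The main obstacle is ensuring that the facet indexing is sharp enough for the overlap of charts $U_{G,I}$ and $U_{G,J}$ to be captured exactly by morphisms in $\frF_G$; this is precisely why the charts use the star $\textup{St}_I$ rather than a larger open subset of the apartment, so that two charts overlap only when the corresponding facets are related by an inclusion of closures. Because $\frF_G$ is $1$-categorical, the homotopy colimit reduces to a set-theoretic quotient of a disjoint union of action groupoids by the equivalence relation generated by the transition $1$-cells and $W_I$ $2$-cells, which can be verified directly using the Levi structure of the $G_I$ inside $LG$.
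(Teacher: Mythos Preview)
Your approach is genuinely different from the paper's. The paper does not attempt a direct verification of essential surjectivity and full faithfulness. Instead it argues by reduction: first observe that the statement is compatible with direct products of groups, and second that if it holds for $G$ then it holds for any quotient $H=G/Z$ by a finite central subgroup (using $U_{G,I}=U_{H,I}$ and $H/H=(G/G)/(Z/Z)$, together with the identification $\frF_H = \frF_G/Z$). This reduces to the two building blocks $G=\GG_m$, which is checked by hand (here $\frF_{\GG_m}=B\ZZ$ and the chart is just $\CC/\CC^*$), and $G$ simple simply-connected, which is already established in \cite[Theorem 5.40(6)]{liUniformizationSemistableBundles2021}. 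The gain of the paper's route is that all the hard analytic input (the \'etale charts, the star-shaped contractions, the comparison with the Bruhat--Tits building) is packaged into the simply-connected case and quoted as a black box; the passage to general reductive $G$ is then pure group theory on the indexing category.

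Your direct strategy is not unreasonable, but as written there is a gap in the full-faithfulness step. The conjugating element $h$ lives in $G(\CC)$, not in the loop group, so invoking ``the affine Bruhat decomposition on $LG(\CC)$ to factor $h$'' is not the right mechanism. What one actually needs is that if two connections $x\in U_{G,I}$ and $y\in U_{G,J}$ have conjugate holonomy in $G$, then the conjugating element lifts to a gauge transformation in $LG$ sending one connection to a $\tilW_G$-translate of the other; this lift lies in the parahoric $G_I^\flat$ (in the notation of \cite{liUniformizationSemistableBundles2021}) only after a careful analysis of the holonomy map and of how the stars $\textup{St}_I$ control the overlaps. That is exactly the content of \cite[\S5]{liUniformizationSemistableBundles2021} in the simply-connected case, and carrying it out directly for general $G$ would amount to reproving that reference with the extra bookkeeping coming from $\pi_1(G)=X_*(T)/Q^\vee$. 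The paper's reduction sidesteps all of this.
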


\begin{proof}
It is clear that if the statement holds for $G_1$ and $G_2$, then it holds for $G_1 \times G_2$. Now let $Z \subset G$ be a finite central subgroup, 
put $H=G/Z$, and assume the statement holds for $G$. Then we have $U_{G,I} = U_{H,I}$, 
and $H/H = (G/G)/(Z/Z)= (\colim_{\frF_G} U_{G,I}/G_I)/(Z/Z) \simeq (\colim_{\frF_G} U_{H,I}/H_I)/Z = \colim_{\frF_H}  U_{H,I}/H_I,$ therefore the statement holds for $H$. 
Now suffices to show the statement holds for $\GG_m$ and simply-connected group. 
When $G=\GG_m$, $\frF_G= B\ZZ$, and $U_{G,\frt_\RR}= \CC$, therefore $\CC^*/\CC^* = (\CC/\CC^*)/\ZZ= \colim_{\frF_{\GG_m}} U_{\GG_m,\frt_\RR}/\GG_m$. 
When $G$ is simply-connected, this is \cite[Theorem 5.40 (6)]{liUniformizationSemistableBundles2021} and Remark~\ref{rmk:F_G} (2). 
\end{proof}

\subsubsection{Gluing functors on automorphic side}
\label{auto_graded}

\begin{lem}
	\label{prop:choice_of_s_I}
There exist choice $ s_I \in I$, for each $I$, such that
\begin{itemize}
	\item the collection of subsets $\{  S_I:=s_I + \frz_\frg  : I \in \frF_G \}$ is stable under $\tilW_G$-action;
	\item for all $I \subset \overline{J}$, $S_I \subset S_J + J^\perp$.
\end{itemize}
\end{lem}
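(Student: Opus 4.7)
The key observation is that $\frz_\frg = \bigcap_{\alpha \in \Phi}\ker\overline{\alpha}$ lies in the direction of every affine root hyperplane, so every facet $I \subset \frt$ is $\frz_\frg$-translation invariant; in particular $S_I = s_I + \frz_\frg \subset I$ automatically, and choosing $S_I$ amounts to choosing a single point $\overline{s}_I$ in the projected facet $\pi(I)$ under $\pi : \frt \to \overline{\frt} := \frt/\frz_\frg$, the Cartan of the derived group $G_\der$. The $\widetilde{W}_G$-action descends to an action of the extended affine Weyl group $\overline{W}$ of $G_\der$ on $\overline{\frt}$ with bounded simplicial alcoves, and the lemma reduces to producing an $\overline{W}$-equivariant family $\{\overline{s}_I \in \pi(I)\}_{I \in \frF_G}$ satisfying a projected version of the face condition.

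My plan is to fix a fundamental alcove $\overline{A}_0 \subset \overline{\frt}$ and to exploit that $\overline{W}$ acts transitively on alcoves with finite stabilizer $\Omega := \Stab_{\overline{W}}(\overline{A}_0)$ (the length-zero subgroup of $\overline{W}$). It therefore suffices to produce an $\Omega$-equivariant collection for facets $J$ of $\overline{A}_0$ and then extend uniquely by $\overline{W}$-equivariance. Fix a $W$-invariant inner product on $\frt$, set $\overline{s}_{\overline{A}_0}$ to be the barycenter of the simplex $\overline{A}_0$ (automatically $\Omega$-fixed, as $\Omega$ permutes its vertices), and for each face $J \subset \overline{\overline{A}_0}$ define $\overline{s}_J$ to be the orthogonal projection of $\overline{s}_{\overline{A}_0}$ onto $\pi(\epsilon_J)$. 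Lifting each $\overline{s}_I$ to an arbitrary $s_I \in I \cap \pi^{-1}(\overline{s}_I)$ produces the required $s_I$, and $\widetilde{W}_G$-stability of $\{S_I\}$ is automatic because the ambiguity in the lift is precisely $\frz_\frg$.

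The main obstacle will be the face-compatibility condition $S_I \subset S_J + J^\perp$ for $I \subset \overline{J}$. By construction both $\overline{s}_I$ and $\overline{s}_J$ are orthogonal projections of the common reference point $\overline{s}_{\overline{A}_0}$ onto the nested affine subspaces $\epsilon_I \subset \epsilon_J$, so an elementary computation with orthogonal decompositions gives $\overline{s}_I - \overline{s}_J \in \frz_J \cap \frz_I^\perp$, which sits inside the slack supplied by $J^\perp$ after absorbing the $\frz_\frg$-direction. The most delicate point is to show that $\overline{s}_I$ does not depend on the choice of alcove in whose closure $I$ lies: any two such alcoves differ by an element of the finite reflection group $W_I$ generated by reflections across the affine hyperplanes through $\epsilon_I$, and since $W_I$ fixes $\epsilon_I$ pointwise it commutes with orthogonal projection onto $\epsilon_I$, giving the required independence. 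Once the face condition has been verified for a chain inside $\overline{A}_0$, it propagates to all of $\frF_G$ by $\overline{W}$-equivariance of the construction.
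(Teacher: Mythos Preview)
Your approach is essentially the paper's: take the barycenter of an alcove and orthogonally project it onto each face. The paper reduces in one line to $G$ simple and adjoint (so $\frz_\frg=0$ and the alcove is a genuine simplex), whereas you project out $\frz_\frg$ and work on $\overline{\frt}$, handling the length-zero subgroup $\Omega$ explicitly; note that after this reduction $\overline{A}_0$ is in general only a product of simplices, so ``barycenter of the simplex'' should be read as the product of barycenters. For well-definedness the paper runs a gallery argument, connecting two alcoves $A,A'$ with $I\subset\overline{A}\cap\overline{A'}$ by a chain of adjacent alcoves sharing codimension-one walls through $I$ and observing that each wall-reflection preserves the projection onto $I$; your phrasing via transitivity of $W_I$ on such alcoves, together with the fact that $W_I$ fixes $\epsilon_I$ pointwise and hence commutes with orthogonal projection onto $\epsilon_I$, is the same argument stated more invariantly. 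One caution on the face condition, which the paper does not spell out: your nested-projection computation yields $s_I-s_J\in\frz_J\cap\frz_I^\perp\subset\frz_I^\perp$, and it is this containment (perpendicularity to the \emph{smaller} facet direction) that is used immediately afterwards to obtain $\overline{\frg}_J\subset\overline{\frg}_I$; it does \emph{not} lie in $\frz_J^\perp$, so your sentence ``sits inside the slack supplied by $J^\perp$'' should be adjusted accordingly.
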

\begin{proof}
Suffices to construct for the case when $G$ is simple and adjoint type. 
In this case, for alcove $A$ (which is a simplex), choose $s_A$ the binary-center of $A$, and for any $I \subset \overline{A}$, choose $s_I$ the orthogonal projection of $s_A$ in $I$. 
Easy to see this is well define: if $I \subset \overline{A} \cap \overline{A'}$, we can find alcoves $A=A_0, A_1, ..., A_n=A'$, 
such that $ \overline{A_i} \cap \overline{A_{i+1}}$ is codimension $1$ and contains $I$. 
Let $r$ be the reflection that send $A_{i}$ to $A_{i+1}$. 
Then the projections of $s_{A_i}$ and $s_{A_{i+1}}$ to $\overline{A_i} \cap \overline{A_{i+1}}$ are the same, hence projections to $I$ are also the same.
\end{proof}

Now for any facet $I$ in $\frt_\RR$, put $\overline{\frg}_I= S_I \otimes_\RR \CC + [\frg_I,\frg_I ] \subset \frg_I$ \footnote{For a affine subspace $A$ of a real vector space $V$,
 we put $A \otimes_\RR \CC$ be the unique complex affine subspace of $V \otimes_\RR \CC$ that contains $A$. }.  Then $\overline{\frg}_I$ is stable under twisted $G_I$-conjugation. 
  for $I \subset \overline{J}$. Moreover the $*$-restriction functors are equivalences:
\begin{equation}
	\label{eq:cs_open_and_closed}
	\xymatrix{ \Ch({\frg_I/'G_I})  \ar[d]   \ar[r]  & \Ch({U_{G,I}/'G_I} )   \ar[d] \\
		\Ch({\overline{\frg}_I/'G_I})  \ar[r]  &  \Ch( \overline{\frg}_I \cap U_{G,I} /'G_I ) }
\end{equation}

For $I \subset \overline{J}$, Prop~\ref{prop:choice_of_s_I} implies that $\overline{\frg}_J \subset \overline{\frg}_I$, and $\overline{\frg}_J \subset \overline{\frg}_I$, 
and $\overline{\frg}_{w(I)}= Ad_w(\overline{\frg}_I)$. 
The stack $\overline{\frg}_J/'G_I$ also satisfies the assumption in Appendix~\ref{secA:graded_lift} (because the $*$-restriction functor $\Ch_c(\frg_I/'G_I) \to \Ch_c(\overline{\frg}_J/'G_I)$ is an equivalence). 
Therefore we have its graded lift $\Ch_{\gr,c}(\overline{\frg}_I/'G_I)$. We shift the perverse $t$-structure on $\Ch_{\gr,c}(\overline{\frg}_I/'G_I)$ and $\Ch_{c}(\overline{\frg}_I/'G_I)$,
so that constant sheaves in degree 0 are in the $t$-heart (and hence the functor $\Ch_c(\frg_I/'G_I) \to \Ch_c(\overline{\frg}_J/'G_I)$ is $t$-exact).

\begin{defn}
	\label{defn：automorphic_functors} \begin{enumerate}
		\item Define the functors $\frF_G \to \Cat$:
			\begin{enumerate}
			\item $\sfCh_G : I \mapsto \Ch(\overline{\frg}_I /'G_I);$
			\item $\sfCh_{c,G} : I \mapsto \Ch_c(\overline{\frg}_I/'G_I);$
			\item $\sfCh^c_G : I \mapsto \Ch(\overline{\frg}_I/'G_I)^c;$
			\item $\sfCh_{\gr,c,G} : I \mapsto \Ch_{\gr,c}(\overline{\frg}_I/'G_I);$

		\end{enumerate}
		\item Define the functors $\frF_{L \subset G} \to \Cat$:
	\begin{enumerate}
		\item $\sfCh_{L \subset G} : I \mapsto \Ch(\overline{\frg}_I \cap {\frl}_I/'L_I);$
		\item $\sfCh_{c,L \subset G} : I \mapsto \Ch_c(\overline{\frg}_I \cap {\frl}_I/'L_I);$
		\item $\sfCh^c_{L \subset G} : I \mapsto \Ch(\overline{\frg}_I \cap {\frl}_I/'L_I)^c;$
		\item $\sfCh_{\gr,c,L \subset G} : I \mapsto \Ch_{\gr,c,G}(\overline{\frg}_I \cap {\frl}_I/'L_I);$

	\end{enumerate}
	\end{enumerate}

\end{defn}

Note that from (\ref{eq:cs_open_and_closed}) (and the similar argument for $\sfCh_{L \subset G})$, we have a commutative square of functors:
\begin{equation}
\xymatrix{  \sfCh^{\open}_G  \circ \beta   \ar[r]^{\simeq} \ar[d]^{\sfRes_P} &  \sfCh_G  \circ \beta \ar[d]^{\sfRes_P}  \\
	\sfCh^{\open}_{L \subset G} \ar[r]^{\simeq}    &  \sfCh_{L \subset G}  
}
\end{equation}
which induces functors between the limits:
\begin{equation}
\label{eq:Res_Ch_open_and_closed}	
	\xymatrix{  \lim \sfCh^{\open}_G     \ar[r]^{\simeq} \ar[d]^{\lim \sfRes_P \circ \beta} &  \lim \sfCh_G \ar[d]^{\lim \sfRes_P \circ \beta} \\
	 \lim 	\sfCh^{\open}_{L \subset G} \ar[r]^{\simeq}    & 
	 \lim  \sfCh_{L \subset G}  
	}
\end{equation}

\subsection{Spectral side}
\label{sec:spec_glue}
 In this section, we define the functors on the spectral side and calculate their limits.

\begin{defn}  \begin{enumerate}
	\label{defn:spectral_functors}	
		\item Introducing the following functors defined on $\frF_G$:
			\begin{enumerate}
			\item $\sfQCoh_{\tilfrC_G}^{\tilW_G} :   I \mapsto  (\prod_{c \in \frC_I} \Sym(\frz_c[1]) \modu)^{W_I}; $
			\item $\sfCoh_{\tilfrC_G}^{\tilW_G} :   I \mapsto  (\prod_{c \in \frC_I} \Sym(\frz_c[1]) \modfd)^{W_I}; $
			\item $\sfPerf_{\tilfrC_G}^{\tilW_G} :   I \mapsto  (\prod_{c \in \frC_I} \Sym(\frz_c[1]) \perf)^{W_I}; $
			\item $\sfCoh_{\gr,\tilfrC_G}^{\tilW_G} :   I \mapsto  (\prod_{c \in \frC_I} \Sym(\frz_c[1] \langle 2 \rangle) \modfd^\gr)^{W_I}; $
	%		\item $\sfCoh_{\gr,\tilfrC_G}^{\tilW_G, \heartsuit_{w,t}} :   I \mapsto  (\prod_{c \in \frC_I} \CC \modfd^\heartsuit)^{W_I}. $
		\end{enumerate}
	\item Introducing the following functors defined $\frF_{L \subset G} $:
		\begin{enumerate}
		\item $\sfQCoh_{\tilfrC_{L \subset G}}^{\tilW_L} :   I \mapsto  (\prod_{c \in \frC_{I_L}} \Sym(\frz_c[1]) \modu)^{W_{I_L}}; $
		\item $\sfCoh_{\tilfrC_{L \subset G}}^{\tilW_L} :   I \mapsto  (\prod_{c \in \frC_{I_L}} \Sym(\frz_c[1]) \modfd)^{W_{I_L}}; $
		\item $\sfPerf_{\tilfrC_{L \subset G}}^{\tilW_L} :   I \mapsto  (\prod_{c \in \frC_{I_L}} \Sym(\frz_c[1]) \perf)^{W_{I_L}}; $
		\item $\sfCoh_{\gr,\tilfrC_{L \subset G}}^{\tilW_L} :   I \mapsto  (\prod_{c \in \frC_{I_L}} \Sym(\frz_c[1] \langle 2 \rangle) \modfd^\gr)^{W_{I_L}}; $

	\end{enumerate}
	\end{enumerate}

\end{defn}

\begin{prop} 
 There are equivalences of dg categories: 
 \begin{enumerate}
 	\item $\lim_{ I \in \frF_G } \sfQCoh_{\tilfrC_G}^{\tilW_G} \simeq  \Gamma(\frt, \scrQCoh_{\widetilde{\frC}_G})^{\widetilde{W}_G} ;$
 	\item $ \lim_{ I \in \frF_{L \subset G} } \sfQCoh_{\tilfrC_{L \subset G}}^{\tilW_L} \simeq   \lim_{ I \in \frF_{L} } \sfQCoh_{\tilfrC_{L}}^{\tilW_L}   \simeq  \Gamma(\frt, \scrQCoh_{\widetilde{\frC}_L})^{\widetilde{W}_L}. $
 \end{enumerate}
 Moreover, The diagram naturally commutes:
 \begin{equation}
 	\label{eq:global_section_of_QCoh}
	\xymatrix{   \lim_{ I \in \frF_G } \sfQCoh_{\tilfrC_G}^{\tilW_G}  \ar[r]^{\simeq} \ar[d]^{(\lim \sfRes) \circ \beta^*} &      \Gamma(\frt, \scrQCoh_{\widetilde{\frC}_G})^{\widetilde{W}_G}  \ar[d]^{\Res} \\
	\lim_{ I \in \frF_{L \subset G} } \sfQCoh_{\tilfrC_{L \subset G}}^{\tilW_L}   \ar[r] &  \Gamma(\frt, \scrQCoh_{\widetilde{\frC}_L})^{\widetilde{W}_L}} 	   
 \end{equation}
 \end{prop}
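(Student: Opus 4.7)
The plan is to prove (1) and (2) via Čech descent for the sheaf of dg-categories $\scrQCoh_{\tilfrC_G}$ on $\frt$, applied to the $\tilW_G$-equivariant cover by open stars of facets, combined with the identification of $\frF_G$ as a $(2,1)$-categorical model for the homotopy quotient of the facet poset by the $\tilW_G$-action. First, for each facet $I\in\frF_G$, let $V_I\subset\frt$ denote the open star of $I$ (as in Section~\ref{auto_ungraded}). I would establish that $\{V_I\}$ is a \emph{good} cover: any non-empty intersection $V_{I_0}\cap\cdots\cap V_{I_n}$ is itself of the form $V_K$, where $K$ is the unique minimal facet whose closure contains each $I_j$. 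Moreover, for any $\epsilon\in\tilfrS_G$, the intersection $\epsilon\cap V_I$ is non-empty iff $\epsilon\supset\epsilon_I$, in which case it is connected (star-shaped about any point of $I$). Since $\scrQCoh_{f(c)}$ is by definition the constant sheaf on $f(c)$ with value $\QCoh(\frz_c[1])$, this would give
\[
\Gamma(V_I,\scrQCoh_{\tilfrC_G})\;\simeq\;\prod_{c\in\tilfrC_G,\,f(c)\supset\epsilon_I}\QCoh(\frz_c[1])\;\simeq\;\prod_{c\in\frC_I}\QCoh(\frz_c[1]),
\]
where the last bijection comes from translating affine subspaces containing $\epsilon_I$ into linear subspaces of $\frt$ via a base-point in $\epsilon_I$, thereby realising them as sub-Levi structures of $G_I$.

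Next, Čech descent for the good cover $\{V_I\}$ would give $\Gamma(\frt,\scrQCoh_{\tilfrC_G})\simeq\lim_{I\in\frF_G^{0}}\Gamma(V_I,\scrQCoh_{\tilfrC_G})$, where $\frF_G^{0}$ denotes the plain $1$-poset of facets with $I\leq J\Leftrightarrow I\subset\overline{J}$ (so $V_J\subset V_I$). This limit is naturally $\tilW_G$-equivariant; I would argue that $\frF_G$ is a model for the homotopy quotient $\frF_G^{0}/\!\!/\tilW_G$, with the $1$-morphisms $w:I\to J$ implementing the $\tilW_G$-action and the $2$-morphisms from $W_I$ recording the pointwise isotropy. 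The $W_I$-invariants built into $\sfQCoh_{\tilfrC_G}^{\tilW_G}(I)$ at each object absorb the $2$-morphism data, giving
\[
\lim_{\frF_G}\sfQCoh_{\tilfrC_G}^{\tilW_G}\;\simeq\;\bigl(\lim_{\frF_G^{0}}\Gamma(V_\bullet,\scrQCoh_{\tilfrC_G})\bigr)^{\tilW_G}\;\simeq\;\Gamma(\frt,\scrQCoh_{\tilfrC_G})^{\tilW_G},
\]
which proves~(1). For~(2), the same argument with $G$ replaced by $L$ yields the second equivalence. For the first, Lemma~\ref{lem:fiber_alpha_contractible} implies that $\alpha:\frF_{L\subset G}\to\frF_L$ has contractible fibers and is therefore cofinal (Quillen's Theorem~A in the $(2,1)$-setting); since $\sfQCoh_{\tilfrC_{L\subset G}}^{\tilW_L}=\alpha^*\sfQCoh_{\tilfrC_L}^{\tilW_L}$ by construction, this induces an equivalence on limits. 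Commutativity of~(\ref{eq:global_section_of_QCoh}) then follows facet-by-facet: at each $I\in\frF_{L\subset G}$ the local restriction $\prod_{c\in\frC_I}\to\prod_{c\in\frC_{I_L}}$ along the natural inclusion $\frC_{I_L}\subset\frC_I$ matches the stalk of the global $\Res$ induced by $\tilfrC_L\subset\tilfrC_G$.

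The main obstacle I anticipate is the middle step: making rigorous the equivalence of $\lim_{\frF_G}\sfQCoh_{\tilfrC_G}^{\tilW_G}$ with the $\tilW_G$-invariants of the plain descent limit $\lim_{\frF_G^{0}}\Gamma(V_\bullet,\scrQCoh_{\tilfrC_G})$. While morally this is the familiar statement that homotopy quotients commute with invariants of equivariant category-valued limits, carefully matching the $2$-morphism data of $\frF_G$ (labelled by $W_I$) with the isotropy-level invariants requires direct $(2,1)$-categorical bookkeeping, or a reduction to a $1$-categorical descent statement via the transversal-category machinery of Section~\ref{sec:cat_pre} and Proposition~\ref{prop:transversal_to_additive}.
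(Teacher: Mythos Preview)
Your approach is correct and close to the paper's, though organized differently. Where you split the argument into (i) non-equivariant \v{C}ech descent over the plain facet poset $\frF_G^{0}$ followed by (ii) passage to $\tilW_G$-invariants via a homotopy-quotient identification $\frF_G\simeq\frF_G^{0}/\!\!/\tilW_G$, the paper carries out both steps at once: Proposition~\ref{prop:descent_for_constant_sheaf} asserts directly that for any $K$-equivariant sheaf $F$ which is a product of constant sheaves one has $\Gamma(X,F)^K\simeq\lim_{\mathscr{I}}\Gamma(U_i,F)^{W_i}$, provided $\colim_{\mathscr{I}} U_i/W_i\to X/K$ is an equivalence of $\infty$-groupoids (with $U_i,X$ given the discrete topology). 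The hypothesis is supplied by Lemma~\ref{prop:colim_of_V}, which is proved by reduction to the simply-connected and $\GG_m$ cases. This absorbs precisely the obstacle you flagged, with no need for explicit $(2,1)$-bookkeeping or the transversal-category machinery of Section~\ref{sec:cat_pre}. Your local identification $\Gamma(V_I,\scrQCoh_{\tilfrC_G})\simeq\prod_{c\in\frC_I}\QCoh(\frz_c[1])$ is the paper's observation $\scrQCoh_{\tilfrC_G}|_{V_I}=\scrQCoh_{\frC_I}|_{V_I}$, and your argument for~(2) via Lemma~\ref{lem:fiber_alpha_contractible} and cofinality matches the paper's.
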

\begin{proof}
(1) Note that by definition, we have $\scrQCoh_{\tilfrC_G} |_{V_I}   = \scrQCoh_{\frC_I} |_{V_I}$, therefore 
 $$\lim_{ I \in \frF_G } \sfQCoh_{\tilfrC_G}^{\tilW_G} \simeq \lim_{ I \in \frF_G } \Gamma(V_I, \scrQCoh_{{\frC}_I})^{{\tilW}_G}  \simeq  \lim_{ I \in \frF_G } \Gamma(V_I, \scrQCoh_{{\tilfrC}_G})^{{\tilW}_G}  \simeq  \Gamma(\frt, \scrQCoh_{{\tilfrC}_G})^{\tilW_G}$$ 
where the last equivalence follows from Lemma~\ref{prop:colim_of_V} below and Prop~\ref{prop:descent_for_constant_sheaf}.
For (2), the second equivalence is (1), and the first equivalence follows from Lemma~\ref{lem:fiber_alpha_contractible}. The commutativity of the diagram is straightforward.
\end{proof}

Define functor $\mathsf{V}:\frF_G \to \textup{Grpd}_\infty$, via $I \mapsto V_I/W_I,$ where $V_I$ are equipped with the discrete topology.

\begin{lem}
	\label{prop:colim_of_V}
	The natural maps $ \colim_{\frF_G} V_I/W_I \to \frt/\tilW $ is an isomorphism of $\infty$-groupoids.
\end{lem}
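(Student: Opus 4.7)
The plan is to show that the natural map $\phi \colon \colim_{\frF_G} V_I/W_I \to \frt/\widetilde{W}$ is an equivalence of $\infty$-groupoids by checking it induces a bijection on $\pi_0$ and isomorphisms on automorphism groups at each point. To set up, I first note that the target $\frt/\widetilde{W}$ is the action $1$-groupoid of $\widetilde{W}$ on the discrete set $\frt$: its set of components is the orbit set $\frt/\widetilde{W}$, and the automorphism group at $x$ is $\Stab_{\widetilde{W}}(x) = W_{I(x)}$, where $I(x)$ denotes the facet of $\frt_\RR$ containing $\mathrm{Re}(x)$. This uses that $W_{\epsilon_I}$ fixes $\epsilon_I$ pointwise, so it acts trivially on $I(x) + i\frt_\RR$. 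Hence it suffices to model the source as a $1$-groupoid with matching data.

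For the colimit, I would work with its Grothendieck construction $\int \mathsf{V}$, the $(2,1)$-category with objects $(I, v)$ where $v \in V_I$, whose $1$-morphisms combine $\frF_G$-morphisms with $W_{(-)}$-actions on fibers, and whose $2$-cells come from the $W_I$-action and the $2$-cells of $\frF_G$. Surjectivity on $\pi_0$ is immediate: any $x \in \frt$ lies in $V_{I(x)}$. For injectivity, given $(I, x)$ and $(J, x')$ in $\int \mathsf{V}$ with $x' = w \cdot x$ for some $w \in \widetilde{W}$, one produces a zigzag using the incidence morphisms $I \to I(x)$ and $J \to w(I(x))$ in $\frF_G$ (which exist because $I \subset \ov{I(x)}$ and $J \subset \ov{w(I(x))}$), together with the Weyl morphism encoded by $w$ connecting $(I(x), x)$ with $(w(I(x)), x')$.

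For the automorphism groups at $[x]$, the fiber $V_{I(x)}/W_{I(x)}$ of $\int \mathsf{V}$ already contains $W_{I(x)} = \Stab_{\widetilde{W}}(x)$. It remains to check that no loop at $[x]$ is trivialized in the colimit, and that no new loops appear. The first follows because the $2$-cells $u \colon w \Rightarrow wu$ in $\frF_G$ are defined precisely to match the $W_I$-action on fibers, so they do not kill any element of $W_{I(x)}$. For the second, one reduces any cycle through other facets back to $I(x)$ via incidence morphisms, using that $I(x)$ is the finest facet compatible with $x$ among those appearing in the loop.

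The main obstacle is the $\pi_1$ computation, specifically controlling the loops from zigzags through neighboring facets. This is essentially a Tits-style fundamental-domain argument: the $\frF_G$-morphisms plus the $W_I$-actions on fibers must generate exactly the stabilizer relations in $\widetilde{W}$ and no more. Concretely, one argues that any loop in $\int \mathsf{V}$ can be homotoped, using the $2$-cells of $\frF_G$, to one entirely within the fiber over the finest facet it passes through, where it is already a $W_{I(x)}$-element and matches an element of $\Stab_{\widetilde{W}}(x)$ under $\phi$.
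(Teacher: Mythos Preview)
Your direct $\pi_0/\pi_1$ computation has a genuine gap at the $\pi_1$ step. The claimed identity $\Stab_{\widetilde W}(x)=W_{I(x)}$ holds only when $\widetilde W$ coincides with the Coxeter group generated by the affine reflections $s_\alpha$, $\alpha\in\widetilde\Phi$; in general $\widetilde W = W\ltimes X_*(T)$ contains this reflection group as a proper normal subgroup whenever $X_*(T)$ is strictly larger than the coroot lattice, and the quotient $\Omega$ can contribute nontrivial point stabilisers lying outside every $W_I$. Concretely, for $G=\PGL_2$ the nontrivial element of $\Omega\simeq\ZZ/2$ (the affine diagram automorphism) fixes the midpoint $x$ of an alcove, so $\Stab_{\widetilde W}(x)=\ZZ/2$, whereas $I(x)$ is the open alcove and $W_{I(x)}=1$. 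Your strategy of showing that ``no new loops appear'' in the colimit is therefore exactly backwards: these extra stabiliser elements \emph{must} arise as new loops, and indeed they do---they come from the nontrivial automorphisms $\Aut_{\frF_G}(I)\cong\Stab_{\widetilde W}(I)/W_I$ of facets in $\frF_G$, which your fibrewise accounting ignores.

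The paper sidesteps this by a reduction rather than a direct verification. As in the proof of Lemma~\ref{prop:equiv_C_points}, one checks that the statement is compatible with products and with passage to quotients by finite central subgroups, thereby reducing to $G$ simple simply-connected or $G=\GG_m$. In the simply-connected case $\widetilde W$ \emph{is} the affine reflection group, $\frF_G$ is equivalent to the poset of facets of a fixed alcove (Remark~\ref{rmk:F_G}(2)), and the statement becomes a classical fact for reflection groups (the paper cites \cite[Prop.~3.3]{liUniformizationSemistableBundles2021}). The $\GG_m$ case is $\frF_{\GG_m}\simeq B\ZZ$ acting on $\CC$ by translation, which is immediate. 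Your outline could be repaired by first carrying out this same reduction---after which your stabiliser formula becomes correct and the remaining Tits-style argument is precisely the cited classical fact---but as written it fails for any $G$ with $\Omega\neq 1$.
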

\begin{proof}
Similar to the proof of Prop~\ref{prop:equiv_C_points}, we reduce to the case when $G$ is simple and simply-connected or $G=\GG_m$. 
The former case is a well-known property for reflection group, see e.g. \cite[Prop 3.3]{liUniformizationSemistableBundles2021}, and the later case is an easy computation.
\end{proof}

\subsection{Matching the functors}
\label{sec:match} In this section, we match up the functors on the automorphic and spectral sides. We first start with the local identifications, which is an analog of Proposition~\ref{prop:identify_invariant_category} for the twisted action: 

\begin{prop}
	There is a canonical equivalence:
	\beq
	\label{eq:spectral_qcoh}
	\LL_{I}:  \Gamma(\frt, \scrQCoh_{\frC_I})^{W_I} \simeq \Ch({ \overline{\frg}_I/'G_I})  .
	\eeq 

Moreover, the diagram naturally commutes:
	\beq  
	\label{eq:identify_restriction_lie_algebra}
	\xymatrix{ \Gamma(\frt, \scrQCoh_{\frC_J})^{W_J}\ar[r]^-{\LL_{J}}  & \Ch_{\gr,c}(\overline{\frg}_J/'G) \\
		\Gamma(\frt, \scrQCoh_{\frC_I})^{W_I} \ar[r]^-{\LL_{I}} \ar[u]_{\Res} &  \Ch_{\gr,c}( \overline{\frg}_I/'G) \ar@<-.5ex>[u]_{j^*}
	}
	\eeq
\end{prop}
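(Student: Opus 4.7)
The plan is to reduce this local statement to Proposition~\ref{prop:identify_invariant_category} applied to the reductive group $G_I$ (viewed as a finite-dimensional reductive group, forgetting its embedding into $LG$), and then pass from the graded constructible framework to the ungraded QCoh framework by forgetting the internal grading and ind-completing.

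First, I would identify $\Ch(\overline{\frg}_I/'G_I)$ with $\Ch(\frg_I/G_I)$, where the latter denotes character sheaves for the ordinary adjoint action of the reductive group $G_I$ on its Lie algebra. The twisted conjugation action of $G_I$ on $\overline{\frg}_I$ differs from the ordinary adjoint action by translation by the central element $s_I \in \frz_{\frg_I}$, and this trivializes the twist after passing to an appropriate translate of $\overline{\frg}_I$ inside $\frg_I$; the $*$-restriction from the full Lie algebra to this open substack is an equivalence on character sheaves by the hypothesis recorded just after diagram~(\ref{eq:cs_open_and_closed}). With this identification, Proposition~\ref{prop:identify_invariant_category} for $G_I$ gives a canonical equivalence
\[
\LL_{\frg_I}: \Gamma(\frt,\scrCoh^{\gr}_{\frC_I})^{W_I} \simeq \Ch_{\gr,c}(\frg_I/G_I),
\]
using that the sets $\frC_{G_I}$ and Weyl group $W_{G_I}$ from Notation~\ref{no:lie} coincide with $\frC_I$ and $W_I$ of Notation~\ref{no:lie_affine}.

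Second, I would promote this graded constructible equivalence to the ungraded $\scrQCoh$ equivalence $\LL_I$ by tensoring with $\Vect^c$ over $\Vect^{\gr,c}$ and then ind-completing. On the automorphic side, the forgetful functor sends $\Ch_{\gr,c}(\frg_I/G_I)$ to $\Ch_c(\frg_I/G_I)$ and its ind-completion is $\Ch(\frg_I/G_I)$. On the spectral side, Koszul duality exchanges $\Sym(\frz_c^*[-2]\langle -2\rangle)\perf^\gr$ with $\Sym(\frz_c[1]\langle 2\rangle)\modfd^\gr$; forgetting the internal grading produces $\Sym(\frz_c[1])\modfd$ and ind-completion yields $\Sym(\frz_c[1])\textup{-mod} \simeq \QCoh(\frz_c[1])$, i.e. the stalk value of $\scrQCoh_{\frC_I}$. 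These two operations intertwine through the equivalence because both are functorial base changes along the forgetful functor $\Vect^{\gr,c} \to \Vect^c$ followed by ind-completion.

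Finally, for the commutativity of the square, the inclusion $I \subset \overline{J}$ forces $\Phi_{\epsilon_I} \supset \Phi_{\epsilon_J}$, so $G_J$ is a Levi subgroup of some parabolic of $G_I$ containing $T$. Under the identifications above, the $*$-restriction $j^*: \Ch(\overline{\frg}_I/'G_I) \to \Ch(\overline{\frg}_J/'G_J)$ corresponds to the parabolic restriction functor $\Res^{G_I}_{G_J}$ of Section~\ref{sec:canonical_general}, which by Remark~\ref{rmk:canonical} is canonically independent of the choice of parabolic; commutativity of the square then reduces to the commutative diagram of Proposition~\ref{prop:identify_invariant_category} applied to the pair $G_J \subset G_I$, after the same forget-and-ind-complete step. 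The main obstacle I anticipate is making the comparison of the first paragraph genuinely canonical at the level of 2-functors rather than pointwise, so that the resulting $\LL_I$ is natural with respect to restriction along $I \subset \overline{J}$; this is precisely the situation where the 2-categorical rigidity supplied by Proposition~\ref{prop:transversal_to_additive} (local faithfulness of $(-)^{\hstw}$) is needed to upgrade the $t$-heart-level identifications coming from perverse sheaves to the full dg-categorical statement.
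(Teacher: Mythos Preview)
Your approach is correct and is precisely the argument the paper leaves implicit: the proposition is stated as ``an analog of Proposition~\ref{prop:identify_invariant_category} for the twisted action'' with no further proof, and the passage from the graded statement to the ungraded $\scrQCoh$ one is exactly the forget-then-ind-complete step you describe (cf.\ the proof of Theorem~\ref{thm:identify_A_and_B}, where $\LL_{1,b} = \LL_{1,d} \otimes_{\Vect^{\gr,c}} \Vect^c$ and $\LL_{1,a} = \Ind(\LL_{1,c})$).

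One point deserves a sentence more of justification: the identification of the transition map $j^*$ with the canonical parabolic restriction $\Res^{G_I}_{G_J}$. Literal $*$-pullback along $\overline{\frg}_J/'G_J \to \overline{\frg}_I/'G_I$ is not parabolic restriction on the nose; what makes them agree on character sheaves is that $s_J \in J$ lies in the locus of $\frg_I$ where the centralizer is exactly $G_J$, so the map $\overline{\frg}_J/'G_J \to \overline{\frg}_I/'G_I$ is \'etale onto its image and, under the equivalences of~(\ref{eq:cs_open_and_closed}), both $j^*$ and $\Res^{G_I}_{G_J}$ become restriction to the same open chart of $G/G$. You clearly have this in mind, but it is worth stating since it is the step where the choice of $s_I$'s from Lemma~\ref{prop:choice_of_s_I} actually does work.
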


\begin{thm} 
\label{thm:identify_A_and_B}	
For $\bullet \in \{1,2\},$ and $\star \in \{a,b,c,d,e\}$,	denote by $\mathsf{A}_{\bullet, \star}$ and $\mathsf{B}_{\bullet, \star}$ the corresponding functors in Definition~\ref{defn：automorphic_functors} and ~\ref{defn:spectral_functors}, respectively.
\begin{enumerate}
	\item For any $\bullet$ and $\star$, there are natural isomorphism of functors: $\LL_{\bullet,\star} : \mathsf{A}_{\bullet, \star} \to \mathsf{B}_{\bullet, \star}.$
	\item For any $\star$, the diagram of functors naturally commutes: 
$$	\xymatrix{   \mathsf{A}_{1, \star} \circ \beta \ar[r]^{ \LL_{1,\star} \circ \beta } \ar[d]^{\Res_P}   &      \mathsf{B}_{1, \star} \circ \beta \ar[d]^{\Res}  \\
				 \mathsf{A}_{2, \star}   \ar[r]^{ \LL_{2,\star}}                  &      \mathsf{B}_{2, \star} 
}$$
\end{enumerate}
\end{thm}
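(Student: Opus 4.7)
The plan is to assemble the identifications $\LL_I$ from the preceding proposition into natural isomorphisms of functors $\frF_G \to \Cat$ (and $\frF_{L \subset G} \to \Cat$), and then deduce the compatibility with restriction from the canonical-restriction machinery developed in Section~\ref{sec:canonical_functors}.

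\textbf{Local identifications.} First, for each $I \in \frF_G$ I would use the equivalence $\LL_I$ of \eqref{eq:spectral_qcoh} (and its $\sfCoh/\sfPerf/\gr$-variants, which come from restricting $\LL_I$ to the appropriate subcategories defined by finite generation, compact support, or graded lifts). The $\gr$-variant is the content of Proposition~\ref{prop:identify_invariant_category} applied to the reductive group $G_I$ acting on $\overline{\frg}_I$ by the gauge action; the ungraded versions are obtained by base change along $\Vect^{\gr,c} \to \Vect^c$ in the sense of Section~\ref{sec:cat_pre}. Since $\QCoh, \Coh, \Perf, \Coh^{\gr}$ are compatible with the Koszul/invariants presentation of both sides, these base changes land in the correct subcategories.

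\textbf{Functoriality on $\frF_G$.} I need to check that $\LL_I$ is natural in morphisms of $\frF_G$. There are two types of generating $1$-morphisms:
\begin{itemize}
\item Translations $w: I \to w(I)$ in $\tilW_G$. On the automorphic side these act by twisted conjugation $Ad'_{\dot{w}}$, which is canonically isomorphic to the identity after passing to the quotient stack. On the spectral side the $\tilW_G$-equivariant structure on $\scrQCoh_{\tilfrC_G}$ is built into the definition. Canonicity of $\LL_I$ (coming from the canonicity of $\Ind^G_L, \Res^G_L$ established in Remark~\ref{rmk:canonical}) makes the two sides match.
\item Specialization maps $I \subset \overline{J}$. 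Here the compatibility is exactly the commutative square \eqref{eq:identify_restriction_lie_algebra}: on the automorphic side the transition functor is $j^*$ (open restriction between twisted adjoint quotients), on the spectral side it is the restriction along $W_I \subset W_J$, $\frC_J \subset \frC_I$.
\end{itemize}
For the $2$-morphisms $u \in W_I$, uniqueness in Proposition~\ref{prop:transversal_to_additive} guarantees that $\LL_I$ intertwines them, and after passing to $\sfCh$ (ungraded) we use that these $2$-cells were already canonical.

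\textbf{Restriction compatibility.} For part (2), the essential point is that on each facet $I$, the parabolic restriction $\Res_{P_I}: \Ch(\overline{\frg}_I/'G_I) \to \Ch(\overline{\frg}_I \cap \frl_I/'L_I)$ corresponds, under $\LL_I$, to the spectral restriction $\prod_{c \in \frC_I} \to \prod_{c \in \frC_{I_L}}$ together with Weyl-group restriction $W_I \supset W_{I_L}$. This is exactly the content of Proposition~\ref{prop:lie_algebra_spectral} (the commutative diagram there), applied to the reductive group $G_I$ with Levi $L_I$. By Remark~\ref{rmk:canonical}, the result does not depend on the choice of parabolic $P_I$, so everything assembles into a natural transformation between the functors out of $\frF_{L \subset G}$.

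\textbf{Main obstacle.} The main technical difficulty is bookkeeping the coherence data as one moves between different facets and different versions ($c$, $\gr$, etc.); a priori each local equivalence is only canonical up to a specified $2$-cell, and one needs the full tower of higher coherences. The device that bypasses this is the $2$-categorical reduction of Proposition~\ref{prop:transversal_to_additive}: working with the graded/transversal versions $\sfCh_{\gr,c}$ and $\sfCoh_{\gr}$ first, one reduces all coherences to commutativity at the level of the $1$-category of $(w,t)$-hearts, where the required identifications are literal identities of semisimple perverse sheaves. The ungraded versions are then obtained by applying the symmetric monoidal functor $-\otimes_{\Vect^{\gr,c}} \Vect^c$, which transports the natural transformations and the commutative squares automatically. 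Having done this at the pointwise level with canonical (not just existential) choices, gluing over $\frF_G$ is formal.
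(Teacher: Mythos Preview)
Your proposal is correct and follows essentially the same approach as the paper: establish the isomorphism first in the graded case $\star=d$ using the canonical equivalences $\LL_I$ and the square \eqref{eq:identify_restriction_lie_algebra}, invoke the $2$-categorical rigidity of Proposition~\ref{prop:transversal_to_additive} to handle coherence, and then deduce the remaining cases by applying $-\otimes_{\Vect^{\gr,c}}\Vect^c$, restricting to compact objects, and Ind-completing. Your explicit decomposition of $1$-morphisms in $\frF_G$ into translations and specializations is a helpful elaboration that the paper leaves implicit.
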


\begin{proof}
	We prove (1) for the case $\bullet =1$, other statements can be proved similarly. We first prove the case $\star=d$. For any $I \to J $ in $\frF_G$,  \eqref{eq:identify_restriction_lie_algebra} gives commutative squares:

	$$\xymatrix{      \sfA_{1,d} (I)  = \Ch_{c,\gr}(\overline{\frg}_I/'G_I)  \ar[d] \ar[r]^-{\simeq} &     	
		  (\prod_{c \in \frC_I} \Sym(\frz_c[1] \langle 2 \rangle) \modfd^\gr)^{W_I} =	\sfB_{1,d} (I)	 \ar[d]^{\Res} 	 	 \\
	\sfA_{1,d} (J)  = \Ch_{c,\gr}(\overline{\frg}_J/'G_J) \ar[r]^-{\simeq}  &  
	  (\prod_{c \in \frC_J} \Sym(\frz_c[1] \langle 2 \rangle) \modfd^\gr)^{W_J}	 =	\sfB_{1,d} (J)			}$$
  These identifications are also compatible with compositions in $\frF_G$, and  gives  $\LL_{1,d}: \sfA_{1,d} \simeq  \sfB_{1,d}$. 
This induces $\LL_{1,b}= \LL_{1,d} \otimes_{\Vect^{\gr,c}} \Vect^c$, 
and since  $\LL_{1,b}(I)$ identifies the full subcategory 
$  \sfA_{1,c}(I) \subset \sfA_{1,b}(I)$ with  $\sfB_{1,c}(I) \subset \sfB_{1,b}(I)$, for all $I \in \frF_G$, 
therefore gives $\LL_{1,c}$. 
Finally, take $\LL_{1,a} = \Ind(\LL_{1,c})$.
\end{proof}

We have the main theorem of this note:

\begin{thm}
	\label{thm:main}
There is an equivalence of dg categories: 
$$ \LL_G: \Ch(G/G)  \simeq   (\prod_{c \in \widetilde{\frC}_G}  \Sym(\frz_c[1]) \textup{-mod})^{\widetilde{W}_G} . $$
Moreover, under the above identification, there are natural commutative squares:
$$\xymatrix{
	\Ch(L/L)  \ar@<-.5ex>[d]_{\Ind_{L \subset P}^G}  \ar[r]^-{\LL_L}  &    (\prod_{c \in \widetilde{\frC}_L}  \Sym(\frz_c[1]) \textup{-mod})^{\widetilde{W}_L}  \ar@<-.5ex>[d]_{\Ind} \\
	\Ch(G/G)  \ar[r]^-{\LL_G} \ar@<-.5ex>[u]_{\Res_{L\subset P}^G}  &  (\prod_{c \in \widetilde{\frC}_G}  \Sym(\frz_c[1]) \textup{-mod})^{\widetilde{W}_G} \ar@<-.5ex>[u]_{\Res} 
}$$
\end{thm}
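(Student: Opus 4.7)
The plan is to combine three inputs already prepared in the paper: the automorphic gluing from Section~\ref{auto_ungraded}, the spectral gluing from Section~\ref{sec:spec_glue}, and the local (termwise) matching of Theorem~\ref{thm:identify_A_and_B}. Concretely, I would first invoke the variant of \cite[Theorem 5.40]{liUniformizationSemistableBundles2021} proved in Section~\ref{auto_ungraded} to get $\Ch(G/G) \simeq \lim_{I\in\frF_G}\sfCh^{\open}_G$, then use \eqref{eq:cs_open_and_closed} (the $*$-restriction is a $t$-exact equivalence) to pass to the closed model $\lim_{I\in\frF_G}\sfCh_G = \lim_{I\in\frF_G}\sfA_{1,a}$. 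Applying Theorem~\ref{thm:identify_A_and_B}(1) for $(\bullet,\star)=(1,a)$ gives a natural equivalence of functors $\sfA_{1,a}\risom\sfB_{1,a}=\sfQCoh^{\tilW_G}_{\tilfrC_G}$ on $\frF_G$; taking limits and then invoking \eqref{eq:global_section_of_QCoh} identifies the result with $\Gamma(\frt,\scrQCoh_{\widetilde{\frC}_G})^{\widetilde{W}_G}$, which Remark~\ref{rmk:invariant} rewrites as $(\prod_{c\in\widetilde{\frC}_G}\Sym(\frz_c[1])\textup{-mod})^{\widetilde{W}_G}$. The composition defines $\LL_G$.

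For the compatibility with parabolic restriction, I would work over the intermediate index category $\frF_{L\subset G}$. Theorem~\ref{thm:identify_A_and_B}(2) with $\star=a$ yields a commuting square of functors on $\frF_{L\subset G}$ between $\sfA_{1,a}\circ\beta\Rightarrow\sfA_{2,a}$ (via $\sfRes_P$) and $\sfB_{1,a}\circ\beta\Rightarrow\sfB_{2,a}$ (via $\Res$) intertwined by $\LL_{1,a}$ and $\LL_{2,a}$. Taking limits, the left column becomes, by the square \eqref{eq:Res_Ch_open_and_closed} together with the second half of the automorphic theorem (which states that $\alpha^*\circ\lim\sfRef$ identifies $\lim_{\frF_{L\subset G}}\sfCh_{L\subset G}$ with $\lim_{\frF_L}\sfCh_L\simeq\Ch(L/L)$), the functor $\Res^G_{L\subset P}:\Ch(G/G)\to\Ch(L/L)$; the right column becomes, by \eqref{eq:global_section_of_QCoh} and its analog for $L\subset G$, the spectral $\Res$. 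This is exactly the $\Res$ half of the stated square. The $\Ind$ half then follows formally by passing to left adjoints.

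The real work has already been done upstream: the key obstacle is making the local identification $\LL_I$ canonical enough that it promotes to a natural transformation of $\Cat$-valued functors on $\frF_G$ (and on $\frF_{L\subset G}$ with parabolic restriction). This is precisely what Theorem~\ref{thm:identify_A_and_B} provides, using the canonicity of $\Ind_L^G$ and $\Res_L^G$ established in Section~\ref{sec:canonical_general} (Remark~\ref{rmk:canonical}) to eliminate any a priori dependence on the choice of $P$. With that in hand, the present theorem is essentially a two-step formal argument: termwise equivalence $\Rightarrow$ equivalence on limits, followed by identification of the limits with the geometric/combinatorial targets via \eqref{eq:global_section_of_QCoh} and Remark~\ref{rmk:invariant}. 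The only bookkeeping subtlety is checking that, under these identifications, $\sfRes_P$ on the automorphic diagrams corresponds to the canonical $\Res$ functor on the spectral side rather than a $P$-dependent variant, which is exactly what Remark~\ref{rmk:canonical} ensures.
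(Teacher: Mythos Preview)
Your proposal is correct and follows essentially the same route as the paper: take limits of the natural isomorphism $\sfA_{1,a}\simeq\sfB_{1,a}$ from Theorem~\ref{thm:identify_A_and_B} (for $\star=a$), then splice in the identifications \eqref{eq:Ch_G_to_Ch_L}, \eqref{eq:Res_Ch_open_and_closed} on the automorphic side and \eqref{eq:global_section_of_QCoh} on the spectral side to obtain the $\Res$ square, with $\Ind$ following by adjunction. Your write-up is in fact more explicit than the paper's own proof, which records the same chain of diagrams without the surrounding commentary.
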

\begin{proof}
We prove the statement for $\Res$, and the statement for $\Ind$ follows by adjunction.
Take the limit of the isomorphism in Theorem~\ref{thm:identify_A_and_B}, for $\star=a$. We have 
$$\xymatrix  {  \lim \sfCh_G  \ar[d]   \ar[r]^-{\simeq}       &  \lim \sfQCoh_{\tilfrC_G}^{\tilW_G} \ar[d]  \\  
	\lim \sfCh_G \circ \beta   \ar[r]^-{\simeq} \ar[d]      &  \lim \sfQCoh_{\tilfrC_{G}}^{\tilW_G}  \circ \beta  \ar[d] \\
 \lim \sfCh_{L \subset G}    \ar[r]^-{\simeq}       &  \lim \sfQCoh_{\tilfrC_{L \subset G}}^{\tilW_L}   
}  $$
Therefore we get commutative diagrams:
$$ \xymatrix{  \Ch(G/G) \ar[d]^{\Res_P} \ar[r]^{\simeq}   &    \lim \sfCh_G  \ar[d]^{ \lim \Res_P  \circ \beta^*}    \ar[r]   &   \lim \sfQCoh_{\tilfrC_G}^{\tilW_G} \ar[d]^{ \lim \Res  \circ \beta^*}   \ar[r] & \Gamma(\frt, \scrQCoh_{\widetilde{\frC}_G})^{\widetilde{W}_G} \ar[d]^{\Res}   \\
	 \Ch(L/L)  \ar[r]^{\simeq}   &    \lim \sfCh_{L \subset G}    \ar[r]   &   \lim \sfQCoh_{\tilfrC_{L \subset G}}^{\tilW_G}   \ar[r] & \Gamma(\frt, \scrQCoh_{\widetilde{\frC}_L})^{\widetilde{W}_L} }
$$
where the square on the left is the composition of (\ref{eq:Ch_G_to_Ch_L}) and (\ref{eq:Res_Ch_open_and_closed}), and the square on the right is (\ref{eq:global_section_of_QCoh}).

\end{proof}

Define the category of \textit{principal character sheaves} $\Ch(G/G)_{\prin} \subset \Ch(G/G)$, to be the full subcategory generated by colimits of the essential image of $\Ind_{T \subset B}^G$. 

\begin{cor}
	\label{cor:principal_character}
	The functors $\Ind_{L \subset P}^G $ and $\Res_{L \subset P}^G$ preserve the subcategories of principal character sheaves, and induce commutative diagrams:
	$$\xymatrix{
	\Ch(L/L)^{\prin}  \ar@<-.5ex>[d]_{\Ind_{L \subset P}^G}  \ar[r]^-{\simeq}  &    \CC[W_L] \#( \CC[X_*(T)] \otimes \Sym(\frt[1])) \textup{-mod} \ar@<-.5ex>[d]_{\Ind} \\
	\Ch(G/G)^{\prin}  \ar[r]^-{\simeq} \ar@<-.5ex>[u]_{\Res_{L\subset P}^G}  &  \CC[W_G] \#( \CC[X_*(T)] \otimes \Sym(\frt[1])) \textup{-mod} \ar@<-.5ex>[u]_{\Res} 
}$$
\end{cor}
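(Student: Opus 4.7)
The plan is to identify $\Ch(G/G)_{\prin}$ with the summand of the invariant category of Theorem~\ref{thm:main} corresponding to a single $\widetilde{W}_G$-fixed point in $\widetilde{\frC}_G$. First I would locate this fixed point: take $c_0 = (\frt, T, \triv) \in \widetilde{\frC}_G$, where $\epsilon = \frt$ is the full ambient affine subspace (the linear space attached to any alcove-type facet, on which no affine root vanishes nontrivially, so $G_\frt = T$), $T$ is its own unique Borel, and $\triv$ is the unique cuspidal on $\calN_T/T$. Translations in $X_*(T)$ preserve $\frt$ as an affine subspace and act trivially on $T$ and on $\triv$; the Weyl group $W_G$ preserves $T$ and $\triv$. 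Hence $c_0$ is a $\widetilde{W}_G$-fixed point. Moreover, since the derivative of any translation is the identity, the action of $\widetilde{W}_G$ on the vector space $\frz_{c_0} = \frt$ factors through the quotient $W_G$. By Remark~\ref{rmk:invariant}, the contribution of the orbit $\{c_0\}$ to the invariant category is
\begin{equation*}
\CC[\widetilde{W}_G] \# \Sym(\frt[1])\textup{-mod} \simeq \CC[W_G] \# (\CC[X_*(T)] \otimes \Sym(\frt[1]))\textup{-mod},
\end{equation*}
precisely the target of the corollary.

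Next I would match $\Ch(G/G)_{\prin}$ with this summand by applying Theorem~\ref{thm:main} with $L = T$: since $T$ has empty root system, $\widetilde{\frC}_T = \{c_0\}$ and $\widetilde{W}_T = X_*(T)$, so the theorem specializes to $\Ch(T/T) \simeq (\CC[X_*(T)] \otimes \Sym(\frt[1]))\textup{-mod}$ and identifies $\Ind_{T \subset B}^G$ with the spectral induction along $\widetilde{W}_T \subset \widetilde{W}_G$. The adjoint $\Res$ is projection to the $c_0$-component (trivially, as $\widetilde{\frC}_T = \{c_0\}$) followed by restriction from $\widetilde{W}_G$-equivariance to $\widetilde{W}_T$-equivariance; by adjunction, $\Ind$ lands entirely on the $\{c_0\}$-orbit, with essential image consisting of freely induced modules $\CC[\widetilde{W}_G] \otimes_{\CC[\widetilde{W}_T]} M$. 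A standard bar-resolution argument then shows these generate the full principal summand under colimits. Thus $\Ch(G/G)_{\prin}$ (defined as colimits of $\Ind_{T \subset B}^G$) is identified with the principal summand.

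For general $L \subset P \subset G$, transitivity of parabolic induction $\Ind_{L \subset P}^G \circ \Ind_{T \subset B^L}^L \simeq \Ind_{T \subset B^G}^G$ (with $B^G \supset B^L$ a Borel of $G$) shows $\Ind_{L \subset P}^G$ preserves principal subcategories. Spectrally, the inclusions $\widetilde{W}_L \hookrightarrow \widetilde{W}_G$ and $\widetilde{\frC}_L \hookrightarrow \widetilde{\frC}_G$ carry the principal fixed point of $L$ to that of $G$, so both spectral $\Ind$ and $\Res$ restrict to functors between principal summands. After the identification of the first two paragraphs, applied to both $L$ and $G$ (with $X_*(T)$ common to $\widetilde{W}_L$ and $\widetilde{W}_G$), these restrict to ordinary induction and restriction along $W_L \hookrightarrow W_G$. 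The corollary's commutative squares then arise as restrictions of the squares in Theorem~\ref{thm:main} to principal summands.

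The hard part will be the bar-resolution generation argument in the second paragraph: one must verify carefully that modules freely induced from $\CC[\widetilde{W}_T]$ generate the entire $\infty$-category of $\CC[\widetilde{W}_G] \# \Sym(\frt[1])$-modules under colimits. This is routine for smash products at the abelian level but needs care to formulate correctly in the dg / $\infty$-categorical setting we are working in.
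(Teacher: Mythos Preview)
Your approach is correct and is essentially the paper's own: the proof there is a single sentence asserting that $\Ch(G/G)_{\prin}$ is the summand of Theorem~\ref{thm:main} corresponding to $c=(T,T,\CC_{0/T})\in\tilfrC_G$, which is exactly your $c_0$. Your concern about the ``hard part'' is misplaced: the spectral $\Ind$ from $\widetilde W_T$ to $\widetilde W_G$ sends the free rank-one module to the free rank-one module, which is already a compact generator of the $c_0$-summand, so no bar-resolution argument is needed.
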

\begin{proof}
It follows from the fact that  $\Ch(G/G)_{\prin}$ is identified with the summand of $c=(T,T,\CC_{0/T}) \in \tilfrC_G$ under Theorem~\ref{thm:main}.
\end{proof}

 Recall that a dg algebra $A$ is \textit{formal} if $A$ is isomorphic to $H^*(A)$ as dg-algebras, and we say a map $f:A \to B$ is \textit{formal} if $f$ is isomorphic to $H^*(f)$ as maps between dg-algebras (this, in particular, requires both $A$ and $B$ are formal). Using the explicit identification in Corollary~\ref{cor:principal_character}, we obtain:

\begin{cor} 
	\label{cor:formality}
	 Denote by $\Spr_G = \Ind_{T \subset B}^G \in \Ch(G/G)$ the Grothedieck-Springer sheaf.
	\begin{enumerate}
		\item \label{item:compute_alg_thm:li_transcendental_char} There is an equivalence of dg-algebras  
	\beq 
		\End(\Spr_G) \simeq \CC[W] \# (H^*(BT) \otimes H^*(T)). 
 \eeq
		In particular, $\End(\Spr_G)$ is a formal dg-algebra. 
	\item There is a natural commutative diagram of dg-algebra
\beq	\xymatrix{   \End(\CC_{T/T})  \ar[r]^{\ind} \ar[d]^{\simeq}   &    \End(\Spr_G)  \ar[d]^{\simeq} \\
				H^*(BT) \otimes H^*(T) 	\ar[r]    &      \CC[W] \# (H^*(BT) \otimes H^*(T))
	}	
\eeq	
where the bottom map is the natural inclusions. In particular, $\ind$ is a formal map.
		\item There is a natural equivalence $\Res_{T \subset B}^G (\Spr_G) \simeq \CC_{T/T}^{\oplus W}$. Moreover, the natural dg-algebra homomorphism $\End(\Spr_G) \to \End(\Res_{T \subset B}^G (\Spr_G)) \simeq \End( \CC_{T/T}^{\oplus W})$ can be expressed explicitly via the commutative diagram
\beq 
\xymatrix{
			\End(\Spr_G) \ar[r]^{\res} \ar[d]^{\simeq} & \End(\Res_{T\subset B}^G(\Spr_G)) \ar[d]^{\simeq} \\
			\CC[W] \# (H^*(BT) \otimes H^*(T)) \ar[r] \ar[d]^{\simeq}	& \End_{\CC}(\CC[W]) \otimes (H^*(BT) \otimes H^*(T)) \ar[d]^{\simeq} \\
			\End_{\CC[W] \# (H^*(BT) \otimes H^*(T))} (\CC[W] \# (H^*(BT) \otimes H^*(T))) \ar[r] & \End_{H^*(BT) \otimes H^*(T)} (\CC[W] \# (H^*(BT) \otimes H^*(T)))
}
\eeq
		where the bottom arrow is induced by the restriction of module structure along
		\[
	H^*(BT) \otimes H^*(T) \to \CC[W] \# (H^*(BT) \otimes H^*(T)).
		\]
	in particular, $\res$ is a formal map.
	\end{enumerate}
\end{cor}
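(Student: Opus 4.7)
The plan is to deduce all three parts by tracing the equivalence of Corollary~\ref{cor:principal_character} through. Set $R_T := \CC[X_*(T)] \otimes \Sym(\frt[1])$ and $R_G := \CC[W] \# R_T$, so that Corollary~\ref{cor:principal_character} produces equivalences $\Ch(T/T) \simeq R_T\textup{-mod}$ and $\Ch(G/G)^{\prin} \simeq R_G\textup{-mod}$ in which $\Ind_{T \subset B}^G$ and $\Res_{T \subset B}^G$ correspond respectively to extension and restriction of scalars along the canonical inclusion $R_T \hookrightarrow R_G$.

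The first step is to identify, under the equivalence $\Ch(T/T) \simeq R_T\textup{-mod}$, the object $\CC_{T/T}$ as a compact generator whose endomorphism dg-algebra is $H^*(T/T) = H^*(T) \otimes H^*(BT)$. Since the equivalence of Corollary~\ref{cor:principal_character} is constructed from the local identifications $\LL_{\frg}$ of Section~\ref{sec:canonical_functors} (assembled via the gluing of Section~\ref{sec:gluing}) using compact generators with prescribed endomorphism algebras, this forces $\CC_{T/T}$ to correspond to the free rank one module and simultaneously furnishes the canonical identification of dg-algebras $R_T \simeq H^*(T) \otimes H^*(BT)$, which is in particular formal since it consists of classical cohomology.

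With this in hand, Part~(1) is immediate: $\Spr_G$ corresponds under the equivalence to $R_G \otimes_{R_T} R_T = R_G$ as a module over itself, whence $\End(\Spr_G) \simeq R_G \simeq \CC[W] \# (H^*(BT) \otimes H^*(T))$; formality follows because this presentation is concentrated in its natural degrees with trivial differential. Part~(2) follows because induction on endomorphism algebras corresponds under the equivalence to the canonical inclusion $R_T \hookrightarrow R_G$, which is formal for the same reason. For Part~(3), $R_G = \CC[W] \otimes R_T$ is free of rank $|W|$ as an $R_T$-module, so $\Res_{T \subset B}^G(\Spr_G) \simeq \CC_{T/T}^{\oplus W}$, and the restriction map on endomorphism algebras becomes the left-action homomorphism $R_G \to \End_{R_T}(R_T^{\oplus W}) \simeq \End_\CC(\CC[W]) \otimes R_T$, which matches the bottom row of the diagram in the statement and is again formal.

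The main obstacle will be the first step: verifying that the particular equivalence produced by the constructions of Sections~\ref{sec:canonical_functors}--\ref{sec:gluing} sends $\CC_{T/T}$ to the free rank one module and that the resulting identification $R_T \simeq H^*(T) \otimes H^*(BT)$ is the canonical one of dg-algebras. This requires unwinding the compact-generator-based construction of $\LL_{\frg}$ for $\frg = \frt$ together with the gluing procedure, after which the remaining assertions reduce to standard manipulations of smash-product algebras.
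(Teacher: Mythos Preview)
Your overall strategy---reduce everything to Corollary~\ref{cor:principal_character} and trace $\CC_{T/T}$ and $\Spr_G$ through the equivalence---is exactly what the paper intends. However, your ``first step'' contains a genuine error: the object $\CC_{T/T}$ does \emph{not} correspond to the free rank one module, and the algebras $R_T = \CC[X_*(T)] \otimes \Sym(\frt[1])$ and $H^*(BT)\otimes H^*(T)$ are \emph{not} isomorphic as dg-algebras. A quick sanity check for $T=\GG_m$: $R_T = \CC[z^{\pm 1}]\otimes \CC[\epsilon]/(\epsilon^2)$ with $|\epsilon|=-1$ is concentrated in degrees $\{-1,0\}$, whereas $H^*(BT)\otimes H^*(T) = \CC[x]\otimes \CC[\eta]/(\eta^2)$ with $|x|=2$, $|\eta|=1$ is unbounded in positive degrees. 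So the ``canonical identification of dg-algebras $R_T \simeq H^*(T)\otimes H^*(BT)$'' you assert cannot exist.

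What is correct is that under the equivalence $\Ch(T/T)\simeq R_T\textup{-mod}$ (built in Section~\ref{sec:gluing} from the local functors $\LL_{\frg}$ together with Koszul duality, as in the proof of Proposition~\ref{prop:identify_invariant_category}), the constant sheaf $\CC_{T/T}$ corresponds to the \emph{augmentation module} $\CC$ over $R_T$: locally $\CC_{\frt/T}$ is sent by $\LL_\frt$ to the free $\Sym(\frt^*[-2])$-module, and then Koszul duality exchanges this with the simple $\Sym(\frt[1])$-module; the $X_*(T)$-equivariant structure is trivial. One then computes $\REnd_{R_T}(\CC) \simeq \REnd_{\CC[X_*(T)]}(\CC)\otimes \REnd_{\Sym(\frt[1])}(\CC) \simeq H^*(T)\otimes H^*(BT)$, the two factors arising as Ext of a skyscraper on $T^\vee$ and as the Koszul dual of the exterior algebra $\Sym(\frt[1])$. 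With this correction in place, your remaining steps go through: $\Spr_G$ corresponds to $R_G\otimes_{R_T}\CC$, and since $R_G = \CC[W]\# R_T$ is free of rank $|W|$ over $R_T$ with the $W$-action on $R_T$ fixing the augmentation, one obtains $\REnd_{R_G}(R_G\otimes_{R_T}\CC)\simeq \CC[W]\# (H^*(BT)\otimes H^*(T))$, together with the induction and restriction descriptions in parts (2) and (3).
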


\begin{rmk}
	For $G$ simply-connected, Corollary~\ref{cor:formality}(1) is a consequence of \cite[Corollary 1.9]{liDerivedCategoriesCharactera}.
\end{rmk}

\begin{cor}
	\label{cor:unipotent_character}
Denote by $\Ch^u_c(G/G)^{\prin} \subset \Ch(G/G)$ the small subcategory generated by finite colimits of $\Spr_G$, then we have commutative diagram:
	$$\xymatrix{
	\Ch^u_c(L/L)^{\prin}  \ar@<-.5ex>[d]_{\Ind_{L \subset P}^G}  \ar[r]^-{\simeq}  &    \CC[W_L] \# (H^*(BT) \otimes H^*(T)) \textup{-perf} \ar@<-.5ex>[d]_{\Ind} \\
	\Ch_c^u(G/G)^{\prin}  \ar[r]^-{\simeq} \ar@<-.5ex>[u]_{\Res_{L\subset P}^G}  &  \CC[W_G] \#(H^*(BT) \otimes H^*(T)) \textup{-perf} \ar@<-.5ex>[u]_{\Res} 
}$$
\end{cor}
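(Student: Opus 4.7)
The plan is to deduce Corollary~\ref{cor:unipotent_character} from Corollary~\ref{cor:principal_character} by tracking $\Spr_G$ through the equivalence and then restricting to compact objects. Set $A_G := \CC[W_G] \# (H^*(BT) \otimes H^*(T))$. First, I will invoke Corollary~\ref{cor:principal_character} to obtain $\Ch(G/G)^{\prin} \simeq A_G\modu$, under the understanding that the two presentations $\CC[X_*(T)] \otimes \Sym(\frt[1])$ and $H^*(BT) \otimes H^*(T)$ of the toral endomorphism algebra are canonically identified (as used implicitly in Corollary~\ref{cor:formality}(1)--(2)).

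The key step is to identify the image of $\Spr_G$ under this equivalence as the free $A_G$-module of rank one. For $L = T$ the equivalence sends $\CC_{T/T}$ to the free $A_T$-module of rank one, which is essentially the content of Corollary~\ref{cor:formality}(2). Since Corollary~\ref{cor:principal_character} intertwines $\Ind_{T \subset B}^G$ with the algebraic induction functor $\Ind : A_T\modu \to A_G\modu$, the image of $\Spr_G = \Ind_{T \subset B}^G(\CC_{T/T})$ is $A_G \otimes_{A_T} A_T \simeq A_G$, namely the free $A_G$-module of rank one.

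Because the equivalence of Corollary~\ref{cor:principal_character} preserves finite colimits, the small subcategory of $\Ch(G/G)^{\prin}$ generated by finite colimits of $\Spr_G$ corresponds to the subcategory of $A_G\modu$ generated by finite colimits of $A_G$, which is $A_G\perf$. The commutative square with $\Ind_{L \subset P}^G$ and $\Res_{L \subset P}^G$ is then inherited from the analogous square in Corollary~\ref{cor:principal_character}, once we check that algebraic induction and restriction between $A_L\modu$ and $A_G\modu$ preserve the perfect subcategories. This in turn reduces to $A_G$ being a finite free module over $A_L$ on both sides, which follows from $\CC[W_G]$ being a free $\CC[W_L]$-module of finite rank via any choice of $W_L$-coset representatives in $W_G$. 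No serious technical obstacle arises: the substantive content is the identification of $\Spr_G$ with the free rank-one module, which is the functorial version of the isomorphism $\End(\Spr_G) \simeq A_G$ from Corollary~\ref{cor:formality}(1).
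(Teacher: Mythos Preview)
Your proposal is correct and matches the approach implied by the paper, which states this corollary without proof as a direct consequence of Corollary~\ref{cor:principal_character} and Corollary~\ref{cor:formality}. The key identification of $\Spr_G$ with the free rank-one $A_G$-module, followed by restriction to the subcategory it generates, is exactly the intended argument.
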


\begin{eg}
	Under the equivalence in Corollary~\ref{cor:unipotent_character}, the constant sheaf $\CC_{G/G}$ corresponds to the perfect module $H^*(BT) \otimes H^*(T)$. This follows from the fact that in the local identifications \eqref{eq:spectral_gr}, the constant sheaves on $\frg/G$ corresponds to the $1$-dimensional sign module of $\CC[W] \# \Sym (\frt[1])$.
\end{eg}

\appendix

\section{Graded lift of categories}

We refer to \cite[Section 5]{hoRevisitingMixedGeometry2022} for the notion of graded lift/mixed version. %Note that in our situation, the equivalent conditions in \cite[Proposition 5.6.12]{hoRevisitingMixedGeometry2022} are assumed to be true.

\label{secA:graded_lift}

\subsection{Graded lift over $\overline{\mathbb{F}}_q$}
 Let $X$ be an algebraic stack over $\overline{\FF}_q$. Denote by $\Sh_c(X)$ the dg category of constructible $\overline{\QQ}_\ell$-sheaves on $X$. Let $S=\{A_i, i\in I\}$ be a set of irreducible perverse sheaves on $X$, assuming that each $A_i$ comes from some mixed sheaves on $X_n$, where $X_n$ is a ${\FF}_{q^n}$-form of $X$.  Then $\RHom_{\Sh(X)}(A_i,A_j)$ is naturally a graded complex of vector spaces, by considering Frobenius weight.
Assume the following condition holds:
\begin{equation}\label{eqn:purity}
	\RHom_{\Sh(X)}(A_i,A_j) \textup{ is pure.}  \qquad \forall A_i,A_j \in S
\end{equation}
Denote by $\Sh_{c,S}(X) \subset \Sh_c(X)$ full dg-subcategory generated by $S$. Put $\Pur_{c,S}(X) \subset \Sh_{c,S}(X)$ the additive 1-category of (shifts of) semisimple complexes. Define $\Sh_{\gr,c,S}(X)= K^b(\Pur_{c,S}(X) ) $, where $K^b(-)$ denotes the dg category of chain complexes in an additive category. Note that $\Pur_{c,S}(X) $ has an induced shift functor from  $\Sh_{c,S}(X)$. This can be viewed as an $\Vect^{\gr,c,\heartsuit_w}$ action on  $\Pur_{c,S}(X)$, which induced $\Vect^{\gr,c} \simeq  K^b(\Vect^{\gr,c,\heartsuit_w})$ action on $\Sh_{\gr,c,S}(X)$. We have the following by \cite[Proposition 5.6.12]{hoRevisitingMixedGeometry2022}:

\begin{prop}
	\label{prop：graded_lift_category}
	There is an canonical equivalence of dg categories: $$\Sh_{\gr,c,S}(X) \otimes_{\Vect^{\gr,c}}  \Vect^{c} \simeq \Sh_{c,S}(X).$$
\end{prop}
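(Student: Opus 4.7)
The plan is to construct an explicit realization functor from the graded lift to the original dg-category, and then use the purity hypothesis to check that it induces an equivalence after the base change $-\otimes_{\Vect^{\gr,c}}\Vect^c$. First I would define a dg-functor $R \colon K^b(\Pur_{c,S}(X)) \to \Sh_{c,S}(X)$ by composing the tautological inclusion $\Pur_{c,S}(X) \hookrightarrow \Sh_{c,S}(X)$ (forgetting the Frobenius weight information) with totalization of bounded chain complexes, using that $\Sh_{c,S}(X)$ is stable. Next, I would observe that $R$ is $\Vect^{\gr,c}$-linear, where $\Vect^{\gr,c}$ acts on $\Sh_{c,S}(X)$ via the symmetric-monoidal forgetful functor $\oblv \colon \Vect^{\gr,c} \to \Vect^c$ that collapses the weight grading. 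The universal property of the relative tensor product of $\infty$-categories then produces the desired functor
\[
\bar R \colon \Sh_{\gr,c,S}(X) \otimes_{\Vect^{\gr,c}} \Vect^c \longrightarrow \Sh_{c,S}(X).
\]

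Essential surjectivity is immediate: by definition $\Sh_{c,S}(X)$ is the dg-subcategory of $\Sh_c(X)$ generated by $S$, every element of $S$ lies in $\Pur_{c,S}(X)$ and hence in the essential image of $\bar R$, and $\bar R$ preserves shifts, cones and retracts. The substantive step is full faithfulness, which reduces to checking that for $A,B \in \Pur_{c,S}(X)$ placed in chain-degree zero, $\bar R$ induces an equivalence
\[
\RHom^{\gr}_{\Sh_{\gr,c,S}(X)}(A,B) \otimes_{\Vect^{\gr,c}} \Vect^c \;\risom\; \RHom_{\Sh_c(X)}(A,B).
\]
The purity assumption \eqref{eqn:purity} is exactly what makes this clean: it implies $\RHom_{\Sh_c(X)}(A,B)$ is pure and hence formal, splitting canonically as $\bigoplus_n \Hom^n(A,B)[-n]$ with each summand carrying a definite Frobenius weight. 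The definition of the graded lift $K^b(\Pur_{c,S}(X))$ with its $\Vect^{\gr,c}$-action records exactly these weight-graded pieces, so that $\RHom^{\gr}(A,B) \simeq \bigoplus_n \Hom^n(A,B)[-n]\langle -n\rangle$ in $\Vect^{\gr,c}$. Applying $-\otimes_{\Vect^{\gr,c}} \Vect^c$ amounts to $\oblv$ on Hom-objects, which forgets the $\langle -n\rangle$ and recovers the ungraded Hom.

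The one step that requires care is justifying that Hom-complexes in the relative tensor product $\scrC \otimes_{\Vect^{\gr,c}} \Vect^c$ can indeed be computed on generators as $\RHom^{\gr}(-,-) \otimes_{\Vect^{\gr,c}} \Vect^c$. The hard part will be setting this up rigorously in the $\infty$-categorical framework, since a priori the relative tensor product is defined by a bar construction and one must show that the geometric realization computing it commutes with $\RHom$ on our particular generators. The clean way to handle this is either (i) to pass to presentable Ind-completions, where relative tensor products of module categories over rigid bases behave well and the required Hom-computation is standard, or (ii) to resolve $\Vect^c$ as a $\Vect^{\gr,c}$-module by the two-sided bar complex and observe that the resulting simplicial diagram of Hom-complexes degenerates at $E_1$ thanks to purity (no cross-weight Ext classes appear). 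In both formulations the combinatorial collapse is driven by \eqref{eqn:purity}, so once this categorical bookkeeping is set up, full faithfulness follows from the formality decomposition above, completing the proof.
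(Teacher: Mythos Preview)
The paper does not give a proof of this proposition; it simply records it as a citation of \cite[Proposition~5.6.12]{hoRevisitingMixedGeometry2022}. Your outline is the standard argument underlying that result: build the realization functor from $K^b(\Pur_{c,S}(X))$ to $\Sh_{c,S}(X)$, pass to the relative tensor product by $\Vect^{\gr,c}$-linearity, and verify full faithfulness on generators using the purity hypothesis~\eqref{eqn:purity} to identify $\oblv(\RHom^{\gr}(A,B))$ with $\RHom(A,B)$. The technical point you isolate---computing Hom-objects in $\scrC \otimes_{\Vect^{\gr,c}} \Vect^c$ on generators---is exactly what the cited reference handles, and both of your proposed routes (Ind-completion, or the bar resolution of $\Vect^c$) are viable. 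So your proposal is correct and supplies the argument the paper outsources.
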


Let $F: \Sh_c(X) \to \Sh_c(Y)$ be a functor of geometric origin, i.e given by the composition of push and pull along maps of algebraic stacks. Assume that 
\begin{equation}
	\label{eqn:F preserve sheaves}
	F: \Sh_{c}(X) \to \Sh_{c}(Y) \textup{ send } S \textup{ into } T.
\end{equation}
\begin{equation}
	\label{eqn:F_n preserve purity}
	F_n: \Sh_{m,c}(X_n) \to \Sh_{m,c}(Y_n) \textup{ preserves pure objects of weight 0 for big } n.
\end{equation}

Where  $\Sh_{m,c}(X_n)$ denote the dg category of mixed constructible sheaves on $X_n$.  Put $F_{\Pur}= F|_{\Pur_{c,S}(X)} : \Pur_{c,S}(X) \to \Pur_{c,T}(Y)$. And $F_\gr:=K^b(F_{\Pur}): \Sh_{\gr,c,S}(X) \to \Sh_{\gr,c,T}(Y).$

\begin{prop}
	\label{prop：graded_lift_functor}
	Under the equivalences in Proposition~\ref{prop：graded_lift_category}, there is a canonical isomorphism $F_\gr \otimes Id \simeq F$.
\end{prop}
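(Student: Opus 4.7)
The plan is to identify both $F$ and $F_\gr \otimes \id$ as dg-extensions of the same additive $1$-functor $F_{\Pur}$, and then appeal to the universal property underlying Proposition~\ref{prop：graded_lift_category} to conclude they are canonically isomorphic. Recall that by hypotheses (\ref{eqn:F preserve sheaves}) and (\ref{eqn:F_n preserve purity}), the functor $F$ restricts to an additive $1$-functor $F_{\Pur}: \Pur_{c,S}(X) \to \Pur_{c,T}(Y)$ between the pure subcategories, and this restriction is $\Vect^{\gr,c,\hsw}$-linear because $F$, being of geometric origin, commutes with cohomological shifts which correspond precisely to the weight-grading on pure objects. By definition, $F_\gr = K^b(F_{\Pur})$ is then a $\Vect^{\gr,c}$-linear dg-functor $\Sh_{\gr,c,S}(X) \to \Sh_{\gr,c,T}(Y)$, and base change along $\Vect^{\gr,c} \to \Vect^c$ yields $F_\gr \otimes \id : \Sh_{c,S}(X) \to \Sh_{c,T}(Y)$.

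The key observation is that under the equivalence of Proposition~\ref{prop：graded_lift_category}, the composite $\Pur_{c,S}(X) \hookrightarrow K^b(\Pur_{c,S}(X)) = \Sh_{\gr,c,S}(X) \to \Sh_{c,S}(X)$ agrees with the tautological inclusion $\Pur_{c,S}(X) \hookrightarrow \Sh_{c,S}(X)$. Consequently, both $F$ and $F_\gr \otimes \id$, when restricted to $\Pur_{c,S}(X)$, give back the same $1$-functor $F_{\Pur}$. To upgrade this pointwise agreement to a canonical natural isomorphism, I use the fact that $\Sh_{c,S}(X)$ is presented as the $\Vect^c$-linear stable dg-category freely generated by the additive $1$-category $\Pur_{c,S}(X)$ via realization/totalization of bounded chain complexes of pure objects. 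Any $\Vect^c$-linear dg-functor out of $\Sh_{c,S}(X)$ into a stable dg-category is therefore determined, up to canonical equivalence, by its restriction to $\Pur_{c,S}(X)$; the identity natural transformation on $F_{\Pur}$ then extends uniquely to the desired isomorphism $F_\gr \otimes \id \simeq F$.

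The main obstacle is making the coherence precise at the $\infty$-categorical level: one needs not merely a quasi-isomorphism of values on each object, but a coherent natural equivalence of $\infty$-functors that is compatible with compositions and with the $\Vect^c$-action. Fortunately, this coherence is precisely what is packaged in \cite[Section 5]{hoRevisitingMixedGeometry2022}: the graded-lift construction is set up so that $K^b(\Pur_{c,S}(X))$ satisfies a universal property as an object of the $\infty$-category of $\Vect^{\gr,c}$-linear dg-categories, and Proposition~\ref{prop：graded_lift_category} identifies $\Sh_{c,S}(X)$ with its $\Vect^c$-linearization via the same universal property. Invoking this framework, the comparison on the generating subcategory $\Pur_{c,S}(X)$ extends uniquely and functorially to a natural equivalence $F_\gr \otimes \id \simeq F$, finishing the proof.
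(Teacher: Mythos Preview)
The paper states this proposition without proof, treating it as part of the formalism imported from \cite{hoRevisitingMixedGeometry2022}; there is no argument in the paper to compare against. Your approach is the natural one and is essentially correct: compare $F$ and $F_\gr\otimes\id$ via their common restriction $F_{\Pur}$ to the pure subcategory, then invoke the universal property of $K^b(-)$ to promote this to a natural equivalence of exact functors.

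One imprecision deserves comment. In your second paragraph you assert that $\Sh_{c,S}(X)$ is ``the $\Vect^c$-linear stable dg-category freely generated by the additive $1$-category $\Pur_{c,S}(X)$.'' This is not true as stated: the free stable $\infty$-category on $\Pur_{c,S}(X)$ is $K^b(\Pur_{c,S}(X)) = \Sh_{\gr,c,S}(X)$, not its degrading $\Sh_{c,S}(X)$. In $\Sh_{c,S}(X)$ the ambient shift on pure objects already coincides with the categorical suspension, a relation not imposed in the free object; so an arbitrary exact functor out of $\Sh_{c,S}(X)$ is \emph{not} determined by its restriction to $\Pur_{c,S}(X)$. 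Your third paragraph effectively repairs this: the correct maneuver is to compare the two exact functors $F\circ\mathrm{real}_X$ and $\mathrm{real}_Y\circ F_\gr$ out of $\Sh_{\gr,c,S}(X)$, note that both restrict to $F_{\Pur}$ on the weight heart, apply the genuine universal property of $K^b$ (equivalently the equivalence $\WCat\simeq\Cat_1^{\textup{add}}$ of \cite{elmantoNilpotentExtensionsInfty2021} invoked in the proof of Proposition~\ref{prop:transversal_to_additive}) to identify them, and then descend along $-\otimes_{\Vect^{\gr,c}}\Vect^c$ using Proposition~\ref{prop：graded_lift_category}. With that adjustment the argument is complete.
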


\subsection{Graded lift over $\CC$}
Now we assume $K$ is a number field, with the ring of integer $O$. And $X$ be an algebraic stack over $\Spec(O)$. Let $S=\{A_i,  i\in I \}$ be a finite set of sheaves of geometric origin (they are summands of push-pull of constant sheaves along algebraic maps). Assume that $ A_i \otimes_O \overline{O/\frp}$ is irreducible perverse sheaf for all but finitely many prime $\frp$.  

Then after removing finitely many primes $\frp_1,...\frp_r$, each  $A_{i}$ is locally constant along $\Spec(O_{\frp_1\frp_2...\frp_r})$. 
This gives an natural identification $\Sh_{c,S_\CC}(X_\CC) \simeq \Sh_{c,S_\frp}(X_{\frp})$, and $\Pur_{c,S_\CC}(X_\CC) \simeq \Pur_{c,S_\frp}(X_{\frp})$, with the evident notation. Similarly as before, we put $\Sh_{\gr,c,S_\CC}(X_\CC)= K^b(\Pur_{c,S_\CC}(X_\CC)).$

Suppose $S_\frp$ satisfies (\ref{eqn:purity}), then we have :
\begin{equation} 
	\label{eqn:graded_lift_category_over_C}
	\Sh_{\gr,c,S_\CC}(X_\CC) \otimes_{\Vect^{\gr,c}}  \Vect^{c} \simeq \Sh_{c,S_\CC}(X_\CC).
\end{equation}

Moreover, let $F_\CC: \Sh_c(X_\CC) \to \Sh_c(Y_\CC)$ be a functor of geometric origin, i.e given by the composition of push and pull along maps of algebraic stacks over $O$. 
Assume (\ref{eqn:F preserve sheaves}), (\ref{eqn:F_n preserve purity}) holds for $F_{\frp}$. Then there is a canonical isomorphism of functors:
\begin{equation} 
	F_{\CC,\gr} \otimes Id := K^b(F_{\CC,\Pur})  \otimes Id  \simeq F_\CC.
\end{equation}

\section{Sheaves and descent}
\label{secA:sheaves_of_categories}

Let $X$ be a topological space, $\mathscr{C}$ an $\infty$-category with arbitrary limits.

\begin{defn}
	A \textit{sheaf on $X$ with valued in} $\calC$ is a functor $F: \textup{Open}(X)^{op} \to \mathscr{C}$, such that for any $U \in \textup{Open}(X)$, and $U= \cup_{i \in I} U_i$ an open cover of $U$.
	The natural functor:
	$$ F(U) \to \lim_{n \in \Delta} \prod_{i_1,i_2,...i_n \in I} F(U_{i_1} \cap...\cap U_{i_n})  $$ 
	is an isomorphism in $\mathscr{C}$.
\end{defn}

Denote by $\Sh(X,\mathscr{C})$ the $\infty$-category of sheaves on $X$ with valued in $\mathscr{C}$.

\begin{eg}[Constant sheaf]
	Let $c \in \mathscr{C}$ be an object, then we have the constant sheaf $\underline{c}_X$ on $X$, defined by $\underline{c}_X(U) = \lim_U c$, where the right hand side means we view $U$ as an $\infty$-groupoid, and then take the limit of the constant functor mapping from $U$ to the single object $c$.
\end{eg}

\begin{defn}
	Let $K$ be a discrete group acting properly discontinuously on $X$. A \textit{$K$-equivariant sheaf} on $X$ is an object in the invariant category $\Sh(X,\mathscr{C})^K$.
	Concretely, a $K$-equivariant sheaf is the data of: 
	\begin{itemize}
		\item a sheaf $F$ on $X$,
		\item an isomorphism $\varphi_w: w^*F \xrightarrow{\sim} F,$ for each $w \in W$,
		\item an homotopy equivalence $\varphi_{u,v}: v^*(\varphi_u) \circ \varphi_v \xrightarrow{\sim} \varphi_{uv},$
		\item and higher compatibilities...
	\end{itemize}
\end{defn}

Define the $(2,1)$-category $\textup{Sub}(X,K)$ with:
\begin{itemize}
	\item Objects: $(U,W)$, with $U$ an open subset of $X$, and $W \subset K$ a subgroup, such that $U$ is stable under $W$-action.
	\item 1-morphisms: $w:(U_1,W_1) \to (U_2,W_2)$, for $ w \in K$, such that $w(U_1) \subset U_2$ and $Ad_w(W_1) \subset W_2$.
	\item 2-morphisms are inner homomorphisms: $u: w_1 \Rightarrow uw_1: (U_1,W_1) \to (U_2,W_2)$, for any $u \in W_1$. 
\end{itemize}	

\begin{prop}
	\label{prop:descent_for_constant_sheaf}
Let $(\mathsf{U},\mathsf{W}): \mathscr{I} \to \textup{Sub}(X,K)$ be a functor, sending $i \to (U_i,W_i)$. Assume that the induced map is an equivalence in the $\infty$-category of $\infty$-groupoid:
$$    \colim_{\mathscr{I}} U_i/W_i  \longrightarrow  X/\Gamma $$ 
where $U_i$ and $X$ are equipped with \textit{discrete} topology.
Then for any $K$-equivariant sheaf $F$ on $X$, which is a product of constant sheaves. There is an equivalence:
$$ \xymatrix{ \Gamma(X, F)^K  \ar[r]^-{\simeq} &  \lim_{\mathscr{I}}  \Gamma(U_i,F)^{W_i}   }$$
\end{prop}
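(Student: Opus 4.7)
The plan is to translate both sides of the claimed equivalence into limits over $\infty$-groupoids, and then apply the hypothesis $\colim_\mathscr{I} U_i/W_i \simeq X/K$ using that $\mathscr{C}$-valued limits convert colimits of indexing $\infty$-groupoids into limits in $\mathscr{C}$.

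First I would use that, since $X$ carries the discrete topology and $F$ is a product of constant sheaves, $F$ is determined by a functor $\widetilde{F}: X \to \mathscr{C}$ on the underlying set, with the property $\Gamma(V, F|_V) \simeq \lim_{x \in V} \widetilde{F}(x)$ for every $V \subset X$. Concretely, if $F = \prod_\alpha F_\alpha$ with each $F_\alpha$ a constant sheaf of value $c_\alpha$, then $\widetilde{F}(x) = \prod_\alpha c_\alpha$ and the formula follows from $\Gamma(V, F_\alpha) = \lim_V c_\alpha$ together with the commutation of limits and products.

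Next, the $K$-equivariant structure promotes $\widetilde{F}$ to a functor from the action $\infty$-groupoid $X/K$ to $\mathscr{C}$, and the invariant sections satisfy
$$\Gamma(X, F)^K \simeq \lim_{X/K} \widetilde{F}, \qquad \Gamma(U_i, F|_{U_i})^{W_i} \simeq \lim_{U_i/W_i} \widetilde{F}|_{U_i/W_i}.$$
The functoriality in $i \in \mathscr{I}$ of the right-hand side is induced by the morphisms of action groupoids $U_i/W_i \to X/K$ coming from the functor $(\mathsf{U}, \mathsf{W})$. Invoking the hypothesis $\colim_\mathscr{I} U_i/W_i \simeq X/K$ and the fact that taking $\mathscr{C}$-valued limits turns colimits of $\infty$-groupoids into limits, we conclude
$$\Gamma(X, F)^K \simeq \lim_{X/K} \widetilde{F} \simeq \lim_{i \in \mathscr{I}} \lim_{U_i/W_i} \widetilde{F}|_{U_i/W_i} \simeq \lim_{i \in \mathscr{I}} \Gamma(U_i, F|_{U_i})^{W_i},$$
which is the desired equivalence.

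The main obstacle lies in verifying the compatibility of the $K$-action with the $\infty$-groupoid description at the level of the $(2,1)$-categorical structure. Concretely, one must check that the 1- and 2-morphisms in $\textup{Sub}(X,K)$, including the inner 2-morphisms $u: w_1 \Rightarrow uw_1$ for $u \in W_1$, match up under the above identifications with the corresponding morphisms of action groupoids $U_i/W_i \to X/K$, and that these induce the right maps on the $\mathscr{C}$-valued limits. The assumption that $F$ is a product of constant sheaves is used essentially here: for a general equivariant sheaf on a space the equivariant structure might not factor through a discrete $\infty$-groupoid, but this assumption reduces all the data to a single functor out of $X/K$.
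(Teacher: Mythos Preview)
Your approach is essentially the same as the paper's: both encode $F$ as a functor out of the action groupoid $X/K$ (the paper passes to $\mathscr{C}^{\op}$ and writes colimits, which is the same thing), identify $\Gamma(-,F)^{(-)}$ with a limit over the corresponding action groupoid, and then use the hypothesis $\colim_{\mathscr{I}} U_i/W_i \simeq X/K$ to decompose the limit.

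One point of generality to watch: in the paper, ``product of constant sheaves'' means $F = \prod_\alpha \underline{c_\alpha}_{Z_\alpha}$ for subspaces $Z_\alpha \subset X$ (this is how the sheaves $\scrQCoh_{\tilfrC_G}$ are built, as products of constant sheaves supported on affine subspaces of $\frt$). Your formula $\widetilde{F}(x) = \prod_\alpha c_\alpha$ tacitly assumes each factor is constant on all of $X$; in the general case the stalk is $\widetilde{F}(x) = \prod_{\alpha:\, x \in Z_\alpha} c_\alpha$. The paper handles this by taking the domain of its functor $\mathsf{F}$ to be the disjoint union $\coprod_\alpha Z_\alpha$ rather than $X$ itself, so that the product over the fiber is built in. Your argument goes through with this adjustment, and otherwise matches the paper's proof.
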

\begin{proof}
Suppose $F= \prod_{i \in I} \underline{{c_i}}_{Z_i}$, for some $c_i \in \mathscr{C}$, and subspace $Z_i \subset X$. Put the functor $\mathsf{F}: \coprod \Pi(Z_i) \to \mathscr{C}^{op}$, sending $\Pi(Z_i)$ constantly to $c_i$, where $\Pi(Z_i)$ is the underlying $\infty$-groupoid of $Z_i$. Then by definition $ \colim \mathsf{F} \simeq \lim \mathsf{F} \simeq  \Gamma(X,F)$. Now the $K$-equivariant structure upgrade the functor to a functor $\mathsf{F}/K: (\coprod \Pi(Z_i)) /K \to \mathscr{C}^{op}$, with  $\colim \mathsf{F}/K \simeq \Gamma(X,F)^K$. Now by the $\infty$-categorical Van Kampen theorem, there is an equivalence of $\infty$-groupoid $\colim_{\mathscr{I}}\Pi(U_i)/W_i \simeq \Pi(X)/K$. This induces an isomorphism in $\mathscr{C}^{op}$:   $\Gamma(X,F)^K  \simeq   \colim_{\Pi(X)/K} \mathsf{F}/K \simeq  \colim_{\mathscr{I}} \colim_{\Pi(U_i)/W_i } \mathsf{F}/K \simeq \colim_{\mathscr{I}} \Gamma(U_i,F)^{W_i} . $
\end{proof}

\bibliographystyle{alpha}
\bibliography{char_shv_II}

\end{document}